\newcommand{\ra}{{\rightarrow}}
\renewcommand{\H}{\mathbb{H}}
\newcommand{\C}{\mathbb{C}}
\DeclareMathOperator{\Hom}{Hom}
\DeclareMathOperator{\out}{Out}
\def\pslc {\mathrm{PSL}(2,\C)}
\theoremstyle{plain}
\newtheorem{thm}{Theorem}[section]
\newtheorem{lem}[thm]{Lemma}
\newtheorem{prop}[thm]{Proposition}
\newtheorem{dfn}[thm]{Definition}
\numberwithin{equation}{section}
\begin{document}
\author{Inkang Kim}
   \address{School of Mathematics,
   KIAS, Heogiro 85, Dongdaemun-gu,
   Seoul, 130-722, Republic of Korea}
   \email{inkang@kias.re.kr}


   \author{Michelle Lee}
   \address{University of Maryland,
Mathematics Building,
College Park, MD 20742, USA}
   \email{mdl@umd.edu}
\title[Separable-stable representations]
        {Separable-stable representations of a Compression body}
        \date{}
        \maketitle

\begin{abstract}
Let $M$ be a hyperbolizable, nontrivial compression body without toroidal boundary components.  In this paper, we characterize which discrete and faithful representations of $\pi_1(M)$ into $\pslc$ are separable-stable.  The set of separable-stable representations forms a domain of discontinuity for the action of $\out(\pi_1(M))$ on the $\pslc$-character variety of $\pi_1(M).$
\end{abstract}
\footnotetext[1]{2000 {\sl{Mathematics Subject Classification.}} 51M10, 57S25.} 
\footnotetext[2]{{\sl{Key words and phrases.}} Separable-stable representation, character variety, compression body, Whitehead lemma.} 
\footnotetext[3]{The first
 author gratefully acknowledges the partial support of  NRF grant
(R01-2008-000-10052-0).  The second author gratefully acknowledges the partial support of NSF RTG grant DMS 0602191 }
\section{Introduction}
Recently, the study of the dynamics of $\out(\pi_1(M))$ on the $\pslc$-character variety of $\pi_1(M),$ for a compact hyperbolizable $3$-manifold $M$ with nonempty boundary and no toroidal boundary components, has led to the somewhat surprising discovery that often the dynamical and geometric decompositions of the $\pslc$-character variety do not coincide.  The $\pslc$-character variety of $\pi_1(M),$ denoted $\mathcal{X}(\pi_1(M), \pslc),$ is the geometric quotient of $\Hom(\pi_1(M), \pslc)$ by inner automorphisms of $\pslc.$  Sitting inside $\mathcal{X}(\pi_1(M), \pslc)$ is $AH(M),$ the set of discrete and faithful representations, which can equivalently be thought of as the deformation space of hyperbolic $3$-manifolds homotopy equivalent to $M.$  The action of $\out(\pi_1(M))$ on the interior of $AH(M)$ is properly discontinuous (see Canary \cite{can-survey}), but if $M$ has a primitive essential annulus, the action cannot be properly discontinuous on all of $AH(M)$ (Canary-Storm \cite{can-sto}).  

Minsky in \cite{Min} was the first to observe that there exists a domain of discontinuity, called the set of primitive-stable representations, for $\out(\pi_1(M))$ containing the interior of $AH(M)$ as well as points on $\partial AH(M),$ when $M$ is a handlebody.  In particular, the set of primitive-stable representations contains both discrete and faithful representations and dense representations.  Jeon-Kim-Ohshika-Lecuire \cite{JKO} gave a complete criterion for a discrete and
faithful representation to be primitive-stable in terms of disc-busting properties of the ending lamination and parabolic loci of the associated hyperbolic $3$-manifold.  In her thesis \cite{Lee}, the second author gave a generalization of the primitive-stable condition to representations of the fundamental group of a compression body into $\pslc$, called the separable-stable condition, and she showed that the set of separable-stable representations is a domain of discontinuity for the action of $\out(\pi_1(M))$ that contains the interior of $AH(M)$ as well as point on $\partial AH(M).$  In this paper, we give a characterization of which discrete and faithful representations of $\pi_1(M)$ into $\pslc$ are separable-stable analogous to the Jeon-Kim-Ohshika-Lecuire criterion.

A compression body $M$ is the boundary connect sum of a $3$-ball, a collection of trivial $I$-bundles over surfaces, and a handlebody, where the other components are connected to the $3$-ball along disjoint discs.  Its fundamental group is a free product of closed surface groups and a free group. If $M$ is not the boundary connect sum of two trivial $I$-bundles over closed surfaces, then an element in $\pi_1(M)$ is separable if it lies in a proper factor of a free decomposition of $\pi_1(M).$  
If $M$ is the boundary connect sum of $S_1 \times I$ and $S_2 \times I,$ where $S_1$ and $S_2$ are closed surfaces, then an element is separable if it lies in a factor of a decomposition $\pi_1(M) \cong A*_{\langle c\rangle}B,$ where $c$ is homotopic to a simple closed curve on $S_1$ or $S_2$.

A homomorphism $\rho:\Gamma\ra \pslc$ is separable-stable if every
geodesic defined by a separable element in the Cayley graph of
$\Gamma$ is mapped to a uniform quasi-geodesic in $\H^3$. See Section
\ref{sep-stab} for a precise definition.  For any discrete and faithful representation $\rho: \pi_1(M) \rightarrow \pslc$ we let $N_\rho$ denote the corresponding hyperbolic $3$-manifold obtained from taking the quotient $\H^3/\rho(\pi_1(M)).$

The main theorem of this article is:
\begin{thm}\label{thm:main}
Let $M$ be a nontrivial hyperbolizable compression body without toroidal boundary components that is not the boundary connect sum of $S_1 \times I$ and $S_2 \times I,$ where $S_1$ and $S_2$ are closed surfaces with genus at least two.  Let $\rho$ be a discrete and faithful representation of $\pi_1(M)$ into $\pslc$.  Then, $\rho$ is separable-stable if and only if in $N_\rho,$ each component of the parabolic loci and each ending lamination is disc-busting.
\end{thm}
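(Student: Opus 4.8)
The plan is to prove the two implications separately, organised around the elementary remark that every element of a proper free factor of $\pi_1(M)$ is separable, so that separable-stability of $\rho$ is governed by the geometry of the restrictions $\rho|_A$ to proper free factors $A\le\pi_1(M)$. Two structural facts are used throughout. First, $N_\rho$ is topologically tame (Agol, Calegari--Gabai), so its ends are geometrically finite or geometrically infinite with an ending lamination carried by a subsurface of $\partial M$, and the parabolic locus is a disjoint union of simple closed curves on $\partial M$. Second, for a compression body a simple closed curve or measured lamination on $\partial M$ fails to be disc-busting exactly when it can be isotoped off some meridian; compressing along that meridian (or along a disc system) then realises the corresponding subgroup inside a proper free factor of $\pi_1(M)$, so in particular a non-disc-busting simple closed curve determines a separable conjugacy class.

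\emph{The ``only if'' direction.} Here I argue by contraposition and produce separable geodesics that are not uniform quasi-geodesics. If a component $c$ of the parabolic locus is not disc-busting, then $[c]$ is separable by the above; since $\rho(c)$ is parabolic, $d_{\H^3}(x,\rho(c^j)x)=O(\log j)$ for a fixed basepoint $x$, whereas the word length of $c^j$ grows linearly in $j$, so the geodesic determined by $c$ is not even a quasi-geodesic. If an ending lamination $\lambda$ is not disc-busting, then $\lambda$ is carried by a subsurface disjoint from a meridian $m$, and the simple closed curves $c_n$ approximating $\lambda$ inside that subsurface miss $m$, hence are separable; as they exit the geometrically infinite end one has $\ell_{N_\rho}(c_n)\le L$ for a fixed $L$ while $|c_n|\to\infty$, so the geodesics they determine cannot all be $(K,C)$-quasi-geodesics for one pair $(K,C)$. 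In either case $\rho$ is not separable-stable.

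\emph{The ``if'' direction.} Assume every component of the parabolic locus and every ending lamination of $N_\rho$ is disc-busting. I would first show $\rho|_A$ is convex cocompact for every proper free factor $A$. Being discrete and faithful, otherwise $N_{\rho|_A}$ has a cusp or a geometrically infinite end. A cusp gives $a\in A$ with $\rho(a)$ parabolic, so $a$ lies in the parabolic locus of $\rho$; but $a$ is in the proper free factor $A$, hence not disc-busting, a contradiction. A geometrically infinite end of $N_{\rho|_A}$, by Canary's covering theorem (applicable as $N_\rho$ has infinite volume), finitely covers a geometrically infinite end of $N_\rho$; the ending lamination upstairs is carried by the boundary of the sub-compression-body with fundamental group $A$, away from the discs cut along, hence misses a meridian, and therefore the ending lamination it covers downstairs can be isotoped off a meridian and is not disc-busting --- again a contradiction. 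Granting convex cocompactness of all $\rho|_A$: a separable $g$ lies in a proper free factor $A_g$; free factors are quasi-isometrically embedded in $\pi_1(M)$, so the geodesic determined by $g$ in the Cayley graph of $\pi_1(M)$ agrees up to uniform constants with the one in the Cayley graph of $A_g$, which the orbit map of the convex cocompact group $\rho|_{A_g}$ carries to a quasi-geodesic.

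\emph{The main obstacle} is uniformity of the quasi-geodesic constants --- equivalently, a bound on the convex-cocompactness constants of $\rho|_A$ independent of the proper free factor $A$, of which there are infinitely many. I would argue this by contradiction and compactness. Separable geodesics with degenerating constants would, by the thick-thin decomposition and tameness, wind an unbounded number of times around short curves of $N_\rho$; the short curves in the bounded part of $N_\rho$ form a finite set, and away from these the geodesic runs through the thick part where length and word-length stay comparable, so a vanishing length-to-word-length ratio forces the winding to occur around cusps or around curves deep in geometrically infinite ends --- and these are disc-busting by hypothesis, the latter because they lie close to the disc-busting ending lamination. Thus a separable $g_n$ would be conjugate to $\gamma_n^{k_n}w_n$ with $\gamma_n$ disc-busting and $k_n\to\infty$; a Whitehead-type analysis of $g_n$, controlling its syllable structure relative to the free splitting realising its free factor, then forces $w_n$ to be long and itself rigidly structured, and unwinding this contradicts either that the geodesic of $g_n$ is badly quasi-geodesic or that $g_n$ remains in a proper free factor. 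Carrying out this Whitehead analysis, and making precise the identification of the descended ending lamination in the covering-theorem step, is where I expect essentially all of the work to lie.
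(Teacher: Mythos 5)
Your proposal has genuine gaps in both directions; the places where you defer the work are exactly where the paper's actual content lies.

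In the necessity direction, you assert that a non-disc-busting ending lamination ``is carried by a subsurface disjoint from a meridian $m$.'' That is false: disc-dodging means $\inf_m i(\lambda,m)=0$ over all meridians, which is compatible with $i(\lambda,\partial D)>0$ for every single essential disc $D$. Your argument only disposes of the easy case where $\lambda$ actually misses a disc. The paper must treat the remaining case separately (Lemma \ref{limitannuli}): assuming $i(\lambda,\partial D)>0$ for all $D$ but $i(\lambda,m_i)\to 0$ along a sequence of meridians, one builds an augmented lamination $\lambda_\infty$ satisfying Lecuire's conditions (A) and (B), extracts a homoclinic leaf from the Hausdorff limit of the $m_i$ via Casson's criterion, shows condition (C') fails, and concludes that $\lambda$ is a Hausdorff limit of boundaries of essential annuli $\partial A_i$; these are separable by an innermost-disc argument (Lemma \ref{esan}) and exit every compact set, contradicting Lemma \ref{compact}. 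None of this is recoverable from your sketch.

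In the sufficiency direction, the uniformity of quasi-geodesic constants over the infinitely many proper free factors is not a technical afterthought --- it is the theorem --- and your proposed compactness argument misidentifies the failure mode. By the paper's Lemma \ref{compact}, separable-stability fails exactly when separable geodesics exit every compact set of $N_\rho$ (or a separable element is parabolic), not when they ``wind around short curves''; a sequence of separable closed geodesics of bounded length diving into a degenerate end is the enemy. The paper's mechanism for ruling this out is Mj's Cannon--Thurston map (a degenerating sequence of separable elements has endpoint pairs limiting onto endpoint pairs of leaves of an ending lamination) combined with Otal's dichotomy: the Whitehead graph of any separable element fails to be strongly connected or has a strong cutpoint, while the Whitehead graph of a disc-busting lamination in tight position is strongly connected without strong cutpoints; since the latter is a finite graph, it eventually embeds in the Whitehead graphs of the $g_i$, a contradiction. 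Your ``Whitehead-type analysis of $g_n$'' gestures at this but supplies no mechanism, and the specific normal form $\gamma_n^{k_n}w_n$ does not describe the general degeneration. You also omit the issue that $N_\rho$ need not be homeomorphic to $M$, so the ending laminations live on the boundary of a different manifold; the paper spends Section 3 constructing the comparison maps $F_i$ on meridians and $T_i$ on limit sets precisely to transfer Whitehead graphs from the characteristic compression body of $N_\rho$ back to $M$. Finally, the whole problem for uniquely freely decomposable $M$ (excluded from Theorem \ref{thm:main} and treated via annulus-busting in Theorem \ref{thm:special case}) signals that ``separable $=$ lies in a proper free factor'' is not the only regime, though your statement does correctly exclude that case.
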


A measured lamination $\lambda$ is \emph{disc-busting} if there exists $\eta > 0$ such that $i(\lambda, m) \geq \eta$ for all unweighted meridians $m$. Otherwise it is \emph{disc-dodging}.  We say that an ending lamination is disc-busting if it is the support of a measured lamination that is disc-busting.  Such a lamination is used by Jeon-Kim-Ohshika-Lecuire in \cite{JKO} for the characterization of primitive stable representations of free groups, and has a root in doubly incompressible laminations (see Kim-Lecuire-Oshika \cite{KLO} or Lecuire \cite{Lec}).
In \cite{Lee2}, the second author showed that representations obtained by pinching Masur domain curves or Masur domain laminations on $\partial M$ are separable-stable.

For the case where $M$ is the boundary connect sum of $S_1 \times I $ and $S_2 \times I$ where $S_1$ and $S_2$ are closed surfaces, we establish the following analogous characterization of the set of discrete and faithful representations that are separable-stable.

\begin{thm}\label{thm:special case}
Let $M$ be the boundary connect sum of $S_1 \times I$ and $S_2 \times I,$ where $S_1$ and $S_2$ are closed surfaces with genus at least two.  Let $\rho: \pi_1(M) \rightarrow \pslc$ be a discrete and faithful representation.  Then, $\rho$ is separable-stable if and only if in $N_\rho,$ each component of the parabolic loci and each ending lamination is annulus-busting.
\end{thm}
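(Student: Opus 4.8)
The plan is to prove Theorem~\ref{thm:special case} by adapting the argument for Theorem~\ref{thm:main} to the special structure of $M = S_1 \times I \,\natural\, S_2 \times I$, where the relevant notion of ``smallness'' for a curve or lamination changes from disc-busting to annulus-busting because the separable elements here are precisely those lying in a factor of a decomposition $A *_{\langle c \rangle} B$ with $c$ carried by an essential simple closed curve on $S_1$ or $S_2$. The first step is to pin down the geometry: for a discrete and faithful $\rho$ with $N_\rho = \H^3/\rho(\pi_1(M))$, identify the characteristic compression body (there is essentially one compressible boundary component, the genus-$(g_1+g_2)$ surface built by gluing $S_1$ and $S_2$ along an annulus), and record that the meridian here is the curve $c$ bounding the essential annulus — so the unweighted meridians of the disc-busting definition are replaced by the boundary curves of essential annuli, which is exactly the annulus-busting condition. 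I would then recall from the second author's thesis \cite{Lee} the precise definition of separable-stable in this setting and the fact that separable geodesics in the Cayley graph of $\pi_1(M)$ are, up to the usual bounded-distance fellow-traveling, concatenations of geodesics in the vertex groups and in the amalgamating cyclic group.

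\medskip

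The forward direction ($\rho$ separable-stable $\Rightarrow$ every parabolic locus and ending lamination is annulus-busting) is the contrapositive argument: if some component of the parabolic locus, or some ending lamination $\lambda$, fails to be annulus-busting, then there is a sequence of essential simple closed curves $c_n$ on $S_1$ or $S_2$ whose normalized intersection with $\lambda$ tends to zero (or which are homotopic into a cusp), and each $c_n$ is separable. By a standard argument — the one used in \cite{JKO} and in the proof of Theorem~\ref{thm:main}, going back to the behavior of efficient geodesics in geometrically finite/tame ends and the Whitehead-type combinatorics — the geodesic representatives of the $c_n$ in $N_\rho$ develop longer and longer almost-closed-up subsegments, so the images of the corresponding Cayley-graph geodesics fail to be uniform quasi-geodesics with any fixed constants; hence $\rho$ is not separable-stable. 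The key input is tameness of $N_\rho$ (Agol, Calegari–Gabai) together with the ending lamination theorem, which let us translate ``intersection number with $\lambda$ small'' into ``geodesic spends a long time near the end/cusp without making definite progress.''

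\medskip

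The reverse direction ($\rho$ discrete and faithful with all parabolic loci and ending laminations annulus-busting $\Rightarrow$ separable-stable) is where the real work lies and where I expect the main obstacle. The strategy mirrors \cite{JKO}: take a separable element $g$, lying in a vertex group $A$ or $B$ (say $A = \pi_1(S_1 \times I)$ after the amalgamation), and show its axis maps to a uniform quasi-geodesic. One decomposes the geodesic $\gamma_g$ in the Cayley graph into the portions lying in $A$, and by the geometry of the $I$-bundle factor $S_1 \times I$ — whose associated cover of $N_\rho$ is a surface group quotient — the annulus-busting hypothesis guarantees that no long subword of $g$ is close (in the curve complex / intersection-number sense) to the amalgamating curve $c$ or to any parabolic or end-invariant, so by the continuity/efficiency estimates for geodesics in tame hyperbolic $3$-manifolds the image stays a quasi-geodesic with constants depending only on $\rho$. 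The main obstacle is uniformity across all separable elements at once: different separable elements can sit in conjugates of $A$ or $B$ by arbitrary elements, and near the amalgamating curve $c$ the two ends of a bi-infinite separable geodesic may pass through both factors, so one must control the ``transition'' near $c$ uniformly — this is precisely where the hypothesis that $c$ itself is annulus-busting (equivalently, that $c$'s geodesic representative is bounded away from being short) must be used, via a Whitehead-lemma argument to bound the combinatorial length of such transitions. I would isolate this as the central lemma, prove it by a compactness/limiting argument (if no uniform bound existed, extract a limiting geodesic that is either disc-like or dodges an annulus, contradicting the hypotheses), and then assemble the quasi-geodesic estimate from the pieces.
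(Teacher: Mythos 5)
Your necessary direction is in the right spirit but incomplete as stated. The paper's route is: an annulus-dodging parabolic curve is itself separable, contradicting Lemma \ref{compact}; an ending lamination $\lambda$ disjoint from an essential annulus is a limit of separable curves exiting every compact set, again contradicting Lemma \ref{compact}; and in the remaining case, where $\lambda$ is annulus-dodging yet meets every essential annulus, one passes to a limit $\lambda'$ of the curves $\partial A_i$ with $i(\lambda,\partial A_i)\to 0$ in $PML(\partial M)$, notes $i(\lambda,\lambda')=0$, and uses minimality to conclude that either $\lambda$ misses an annulus outright or $\lambda$ lies entirely on one surface factor (hence misses every annulus in the other factor) --- both impossible. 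Your phrase about geodesic representatives developing ``longer and longer almost-closed-up subsegments'' gestures at the right phenomenon but does not address this intermediate case, which is exactly where the topological limit argument is needed.

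The genuine gap is in the sufficient direction. You propose a direct quasi-geodesic estimate by decomposing separable geodesics along the amalgamated product $A*_{\langle c\rangle}B$ and controlling the pieces, with uniformity supplied by ``a compactness/limiting argument'' extracting ``a limiting geodesic that is either disc-like or dodges an annulus.'' For a general discrete and faithful $\rho$ --- possibly geometrically infinite --- there is no a priori reason that the degenerate limit is carried by an end invariant: the vertex-group covers of $N_\rho$ need not be convex cocompact, separable geodesics may penetrate arbitrarily deep into degenerate ends, and the failure of quasi-geodesic constants could in principle come from anywhere. The ingredient that closes this, and which your sketch omits, is Mj's Cannon--Thurston theorem (Theorem \ref{Mj2}): the orbit map extends continuously to $\partial C_S(\Gamma)$, and the \emph{only} identified pairs of boundary points are those lying on leaves (or complementary regions) of the end invariants. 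Combined with Lemma \ref{notss}, a failure of separable-stability yields separable elements $g_i$ whose limiting fixed points are identified, hence land on a leaf of some end invariant $\lambda$; one then needs the Whitehead-graph dichotomy --- the Whitehead graph of a separable element must ``miss'' some essential annulus, while the Whitehead graph of an annulus-busting $\lambda$ meets every essential annulus, and for large $i$ the graph of $g_i$ contains that of $\lambda$ --- to reach a contradiction. Without the Cannon--Thurston input to identify the source of degeneration, and without the Whitehead-graph comparison to convert annulus-busting into a combinatorial obstruction, your compactness argument does not close.
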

A measured lamination $\lambda$ is {\it annulus busting} if there exists $\eta>0$ such that $i(\lambda, \partial A) \geq \eta$ for any essential annulus $A.$

\section{Preliminaries}

\subsection{Compression bodies}\label{sec:compbody}

A \emph{compression body} is a compact, orientable, irreducible $3$-manifold $M$ with a boundary component, $\partial_e M,$ called the \emph{exterior boundary}, such that the inclusion $i: \partial_e M \hookrightarrow M$ induces a surjection on the level of fundamental groups $i_*:\pi_1(\partial_e M) \twoheadrightarrow \pi_1(M)$. The other boundary components are called \emph{interior boundary components}.  Equivalently, a compression body is the boundary connect sum of a $3$-ball, a collection of trivial $I$-bundles over surfaces, and a handlebody, where the other components are connected to the $3$-ball along disjoint discs.  

The fundamental group of a compression body can be decomposed as a free product, $\pi_1(M) \cong G_1 * G_2 * \cdots * G_n,$ where $G_i$ is isomorphic to a closed surface group for $1 \leq i \leq k$ and $G_j$ is infinite cyclic $k<j \leq n$.  By Grushko's theorem (\cite{gru}) and Kurosh's subgroup theorem (\cite{kur}), any other decomposition of the fundamental group into a free product, $\pi_1(M) \cong H_1 * H_2 * \cdots H_m,$ where each factor is freely indecomposable, satisfies $n=m$ and $H_i \cong G_i$, up to re-ordering.  

A compression body is \emph{trivial} if $i_*:\pi_1(\partial_eM) \rightarrow \pi_1(M)$ is an isomorphism, i.e. if $M$ is a trivial $I$-bundle.  We say that $M$ is \emph{uniquely freely decomposable} if $M$ is the boundary connect sum of two trivial $I$-bundles over closed surfaces since, in this case, the decomposition of $\pi_1(M)$ is essentially unique.  

An \emph{essential disc} in $M$ is a properly embedded disc whose boundary is nontrivial in $\partial M.$  A compression body $M$ has the property that any splitting of $\pi_1(M)$ as a free product or as an HNN-extension over the trivial group can be realized by an essential disc in $M$ in the following sense.  Suppose that $D$ is a separating essential disc and $M-\mathcal{N}(D)=M_1 \sqcup M_2.$  If $M_i'=M_i \cup \mathcal{N}(D)$ for $i=1,2,$ then $\pi_1(M) \cong i_*(\pi_1(M_1')) * i_*(\pi_1(M_2')),$ where $i$ is inclusion and the base point is chosen to lie in $D.$  In this case, we say that $D$ realizes the splitting of the fundamental group.  As we can move the basepoint around, this is only well-defined up to conjugation.  If $D$ is a non-separating essential disc and $M_1 = M- \mathcal{N}(D),$ then $\pi_1(M) \cong i_*(\pi_1(M_1'))*_{\{1\}}.$  We say that $D$ realizes this HNN-extension.

\begin{lem}[Lee \cite{Lee2}, Lemma 2]\label{lem:freeprod}
Let $M$ be a compression body and let $\pi_1(M)= A * B$ be a nontrivial splitting of $\pi_1(M)$ into a free product.  Then, the splitting is realizable by an essential disc.
\end{lem}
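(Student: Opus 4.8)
The statement asserts that any nontrivial free-product splitting $\pi_1(M) = A * B$ of a compression body is realized by an essential disc. The natural approach is via Stallings' theory of ends / the theory of group actions on trees, combined with the standard "sphere and disc" technology of Scharlemann, adapted to compression bodies. I will first recall that a free-product decomposition $\pi_1(M) = A * B$ determines a minimal action of $\pi_1(M)$ on a simplicial tree $T$ with trivial edge stabilizers and exactly two orbits of vertices, with stabilizers conjugate to $A$ and $B$. The goal is to realize the single edge orbit by a properly embedded disc.

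\medskip

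The key steps, in order, would be: (1) Put the universal cover $\widetilde M$ in good position. Since $M$ is irreducible with compressible boundary, $\widetilde M$ is an open subset of $\H^3$ (or at least aspherical with a nice compact core), and we may take a system of disjoint properly embedded discs in $M$ that cut $M$ into pieces with incompressible boundary (a "compression system"); lifting these to $\widetilde M$ gives a $\pi_1(M)$-invariant family of disjoint planes. (2) Construct a $\pi_1(M)$-equivariant map from $\widetilde M$ to $T$: send the preimage of the compression discs to midpoints of edges and the complementary pieces to vertices. Homotope so that the discs map to edge midpoints as efficiently as possible; this uses that the pieces of the compression system have incompressible boundary, hence their fundamental groups are conjugate into a single vertex stabilizer by the structure theorem for graphs of groups (no further splitting is available inside a piece unless it is a handlebody summand, which one treats separately). (3) Use an innermost-disc / minimal-intersection argument: among all such equivariant families of discs realizing a map to a tree refining the $A*B$ splitting, choose one minimizing the number of disc components; an innermost argument on circles of intersection, together with irreducibility of $M$ (so $2$-spheres bound balls) and incompressibility of the complementary boundary, shows the minimal family projects to a single properly embedded essential disc $D$ in $M$ realizing $\pi_1(M) = i_*\pi_1(M_1') * i_*\pi_1(M_2')$. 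Finally, one matches the two-vertex-stabilizer data with $A$ and $B$ using Grushko/Kurosh uniqueness (already quoted in the Preliminaries) to conclude $i_*\pi_1(M_i')$ is conjugate to the prescribed factors.

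\medskip

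Alternatively — and this may be cleaner given the tools already available in this paper — one can argue more directly using the known structure of compression bodies: write $\pi_1(M) \cong G_1 * \cdots * G_k * F_r$ as in Section~\ref{sec:compbody}, with $G_i$ closed surface groups and $F_r$ free, realized by a canonical system of discs. Then by Kurosh's subgroup theorem applied to $A$ and to $B$, each of $A, B$ is itself a free product of conjugates of subgroups of the $G_i$'s and a free group; by the rigidity of closed surface groups in free products (a closed surface group is freely indecomposable and cannot be a proper free factor of another group unless it is conjugate to one of the $G_i$), each $G_i$ lands (up to conjugacy) entirely inside $A$ or entirely inside $B$. This partitions $\{G_1, \dots, G_k\}$ and splits the free group $F_r$ correspondingly, so $A*B$ is obtained from the canonical decomposition by a Nielsen-type rearrangement of the free factors; such rearrangements are realized by sliding and band-summing the canonical discs, producing a single essential disc realizing the splitting. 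The main obstacle in either route is the same: controlling how the free (handlebody) part $F_r$ distributes between $A$ and $B$ and ensuring the disc one produces is connected and properly embedded — i.e., turning the algebraic decomposition into an actual embedded disc rather than an immersed surface — which is exactly where irreducibility of $M$ and the innermost-disc surgery are essential.
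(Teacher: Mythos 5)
First, note that this paper does not actually prove the lemma: it is imported verbatim from Lee \cite{Lee2}, Lemma 2, so there is no in-paper argument to compare yours against. Judged on its own terms, your proposal correctly identifies the two standard routes (a Stallings-style equivariant map to the Bass--Serre tree of $A*B$, or Grushko--Kurosh plus geometric realization of a rearrangement of the canonical splitting), but in both routes the decisive step is asserted rather than proved.

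In the first route, the equivariant-map/tower machinery produces a surface lying over the edge midpoints which, after compressing, discarding sphere components (irreducibility) and boundary-parallel pieces, is a \emph{system} of essential discs dual to a graph of groups that only \emph{refines} $A*B$. Innermost-disc surgery reduces intersections between surfaces; it does not reduce the number of components of the system, so it does not by itself deliver a single separating disc whose two complementary pieces carry exactly $A$ and $B$. The missing step is to band-sum (tube along arcs in $\partial M$) the discs lying over the one edge of $T/(A*B)$ into a single separating disc and to check that the result is still essential and induces the given splitting rather than a proper refinement or a conjugate of it. In the second route, the sentence ``such rearrangements are realized by sliding and band-summing the canonical discs'' is precisely the nontrivial content: one needs that the Fouxe--Rabinovitch generators of the automorphism group of $G_1*\cdots*G_k*F_r$ (partial conjugations, transvections, factor automorphisms, permutations of factors) are all induced by homeomorphisms of the compression body. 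That is the substance of McCullough--Miller's work, not a consequence of Grushko or Kurosh, and without citing or proving it the argument is circular. Either route can be completed, but as written each stops exactly where the real work begins.
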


\begin{lem}\label{lem:hnn}
Let $M$ be a compression body.
Any splitting of the form $\pi_1(M) \cong A*_{\{1\}}$ is realizable by an essential disc.
\end{lem}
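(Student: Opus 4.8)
The plan is to reduce the HNN case to the free-product case already handled in Lemma~\ref{lem:freeprod}. Suppose $\pi_1(M) \cong A *_{\{1\}}$, so that $\pi_1(M) \cong A * \langle t \rangle$ where $t$ is the stable letter of the HNN extension over the trivial group; in particular $\pi_1(M)$ splits as a free product with a free cyclic factor. By Lemma~\ref{lem:freeprod} this free-product splitting $\pi_1(M) \cong A * \langle t\rangle$ is realized by an essential disc $D$. First I would observe that since the factor $\langle t\rangle$ is infinite cyclic and corresponds under the free decomposition $\pi_1(M)\cong G_1 * \cdots * G_n$ to one of the free cyclic factors $G_j$, the disc $D$ must be non-separating: a separating essential disc produces $\pi_1(M)\cong i_*(\pi_1(M_1'))*i_*(\pi_1(M_2'))$ with \emph{both} factors carried by proper submanifolds, and a closed submanifold of a compression body cannot have infinite cyclic fundamental group unless it is a solid torus, which would force a toroidal boundary component or contradict irreducibility; one checks that the piece carrying $\langle t\rangle$ cannot be such a solid torus here, so at least one side is forced and in fact $D$ is non-separating.

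Next I would verify that a non-separating essential disc $D$ with $M_1 = M - \mathcal{N}(D)$ does realize an HNN splitting $\pi_1(M)\cong i_*(\pi_1(M_1'))*_{\{1\}}$, and that this is (up to conjugation and inner automorphism of the factors) the same HNN splitting we started with. This follows from van Kampen's theorem applied to $M = M_1' \cup_{\mathcal{N}(D)} \mathcal{N}(D)$: gluing back the two copies of $D$ on $\partial M_1'$ via a $1$-handle realizes exactly an HNN extension of $\pi_1(M_1')$ over the trivial group, with base group $i_*(\pi_1(M_1'))$. Since $\pi_1(M_1') \cong A$ by the conclusion of Lemma~\ref{lem:freeprod} applied to the cut manifold, and the stable letter is carried by a loop dual to $D$, the resulting HNN splitting is conjugate to the given one. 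Here I would appeal to the standard fact (e.g.\ via the Kurosh/Grushko rigidity quoted in Section~\ref{sec:compbody}) that free splittings with a specified indecomposable-or-cyclic factor structure are unique up to the obvious moves, so that the splitting realized geometrically agrees with the algebraic one.

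The main obstacle I anticipate is the first step: pinning down why $D$ must be non-separating and, more delicately, why the HNN splitting realized by the cut does not merely have \emph{some} base group but has base group exactly conjugate to $A$. One must rule out the possibility that Lemma~\ref{lem:freeprod} hands back a disc realizing $A * \langle t\rangle$ in a "wrong" way — for instance a separating disc where one side has fundamental group a free product $A' * \Z$ rather than $\Z$ — and argue that after possibly replacing $D$ by a different essential disc (slide and isotopy moves on the compressing discs, or an innermost-disc argument on $M_1'$) one obtains a non-separating disc dual precisely to the stable letter. I would handle this by choosing a system of compressing discs for $M$ adapted to the free decomposition $G_1 * \cdots * G_n$ and taking $D$ to be the one cutting off the cyclic factor $\langle t \rangle$, then using irreducibility of $M$ to isotope away any extra intersections; the rest is bookkeeping with van Kampen's theorem.
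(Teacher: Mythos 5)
There is a genuine error in your first step. You claim the disc furnished by Lemma \ref{lem:freeprod} for the splitting $A * \langle t\rangle$ must be non-separating, arguing that a separating disc would cut off a piece with infinite cyclic fundamental group, hence a solid torus, which ``would force a toroidal boundary component or contradict irreducibility.'' Neither consequence holds: when a separating essential disc cuts off a solid torus $C_2$, the torus $\partial C_2$ is the union of a copy of the disc and a once-punctured torus lying in $\partial M$, so $\partial M$ acquires no toroidal component and irreducibility is untouched. In fact, by the paper's convention a disc realizing a free-product splitting $A*B$ is separating, so Lemma \ref{lem:freeprod} hands you exactly this configuration: a separating disc $D'$ cutting $M$ into $C_1$ with $\pi_1(C_1)$ conjugate to $A$, and $C_2$ a compact, orientable, irreducible $3$-manifold with $\pi_1(C_2)\cong\Z$, i.e.\ a solid torus. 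Your subsequent van Kampen argument is set up for a non-separating disc, so it cannot be applied to the disc you actually have, and the attempt to ``rule out'' the solid-torus side rules out the configuration that genuinely occurs.

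The fix --- and the paper's actual proof --- is one more step beyond where you stopped: take the meridian disc $D$ of the solid torus $C_2$. This disc is non-separating in $M$, and cutting along it turns $C_2$ into a ball, so $M-\mathcal{N}(D)$ deformation retracts onto $C_1$ together with a ball and has fundamental group conjugate to $A$; the loop dual to $D$ supplies the stable letter $t$. Your van Kampen bookkeeping from the second paragraph then goes through verbatim for this $D$ in place of $D'$.
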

\begin{proof}
Recall that $A*_{\{1\}}$ is isomorphic to $A*\mathbb{Z}.$  By Lemma \ref{lem:freeprod}, we can realize the latter splitting by an essential disc $D'$. Then, $D'$ separates $M,$ into two components $C_1$ and $C_2,$ where $\pi_1(C_1)$ is conjugate to $A$ and $C_2,$ is a compact, oriented, irreducible $3$-manifold with infinite cyclic fundamental group, i.e. a solid torus.  Then, the essential disc $D$ in the solid torus, realizes the original splitting.
\end{proof}

\subsection{Separable-stable representations}\label{sep-stab}
In a free group, an element is called \emph{primitive} if it can be completed to a free generating set and an element is \emph{separable} if it lies in a free factor.  The notion of separability is originally due to Stallings (\cite{sta}) in his study of separable sets
in free groups that generalized Whitehead's (\cite{Wh1}) study of primitive elements of free groups.  
Although the notion of a primitive element does not generalize to other freely decomposable groups, the notion of a separable element does.
\begin{dfn}If $M$ is a compression body that is not uniquely freely decomposable, then an element in $\pi_1(M)$ is \emph{separable} if it lies in a proper factor of a decomposition of $\pi_1(M)$ into a nontrivial free product.  If $M$ is the boundary connect sum of two trivial $I$-bundles over closed surfaces $S_1$ and $S_2$, then an element in $\pi_1(M)$ is \emph{separable} if it lies in a factor of a decomposition $\pi_1(M)=A*_{\langle c\rangle}B$ where $c$ is homotopic to a simple closed curve on $S_1$ or $S_2$
\end{dfn}

One can think of separability in the following geometric way.  If $M$ is not uniquely freely decomposable, by Lemma \ref{lem:freeprod}, an element $g$ of $\pi_1(M)$ lies in a proper factor of a free decomposition if and only if its associated curve in $M$ is homotopic to one that misses an essential disc.  Similarly, lying in a factor of a decomposition $\pi_1(M)=A*_{\langle c\rangle}B$ where $c$ is homotopic to a simple closed curve on $S_1$ or $S_2$ is equivalent to missing the corresponding essential annulus $c \times I,$ up to homotopy.  An \emph{essential annulus} in $M$ is a properly embedded incompressible annulus that is not homotopic, relative to its boundary, into the boundary of $M.$

Let $\Gamma$ denote the fundamental group of $M$ and let $S$ be a finite symmetric generating set for $\Gamma.$  The \emph{Cayley graph of $\Gamma$ with respect to $S$}, denoted $C_S(\Gamma)$, is a graph where the vertices are in one-to-one correspondence with the elements of $\Gamma$ and there is an edge between $g$ and $h$ if there exists an element $s$ in the generating set $S$ such that $gs=h$.  The group $\Gamma$ acts on $C_S(\Gamma)$ by left multiplication on the vertices.  Since $\Gamma$ is torsion-free and Gromov hyperbolic each element $g$ of $\Gamma$ has two fixed points on $\partial C_S(\Gamma)$.  For each $g$ in $\Gamma$, let $g_-$ and $g_+$ denote the repelling and attracting fixed points of $g$ acting on $\partial C_S(\Gamma)$.  Let $L_{S}(g)$ denote the set of geodesics connecting $g_-$ and $g_+$ and $\mathcal{S}_S$ denote the set of geodesics $l$ in $C_S(\Gamma)$ such that $l$ is contained in $L_{S}(g)$ for some separable element $g$.

Given a representation $\rho: \Gamma \rightarrow \pslc$ and a basepoint $x$ in $\H^3$, there exists a unique $\rho$-equivariant map $\tau_{\rho, x}: C_S(\Gamma) \rightarrow \H^3$ taking  the identity to $x$ and edges to geodesic segments.

\begin{dfn}
A representation $\rho: \Gamma \rightarrow \pslc$ is called \emph{$(K, A)$-separable-stable} if there exists a basepoint $x$ in $\H^3$ such that $\tau_{\rho, x}(l)$ is a $(K, A)$-quasi-geodesic for any $l$ in $\mathcal{S}_S.$
\end{dfn}
We will call $\rho$, a representation, \emph{separable-stable} if there exists $(K, A)$ such that $\rho$ is $(K, A)$-separable-stable.  Separable-stability is independent of the choice of basepoint in $\H^3$ and the choice of generators $S$ for $\Gamma$, and separable-stability is invariant under conjugation (see Lemma 16 in Lee \cite{Lee}).  The set of separable-stable representations forms a domain of discontinuity for the action of $\out(\pi_1(M))$ strictly larger than the interior of $AH(M)$.  Namely the following is true.   

\begin{thm}[Lee \cite{Lee2}]
Let $M$ be a nontrivial compression body without toroidal boundary components.  The outer automorphism group $\out(\pi_1(M))$ acts properly discontinuously on the set of separable-stable representations, which contains the interior of $AH(M)$ as well as points on the boundary of $AH(M).$  
\end{thm}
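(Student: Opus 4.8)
The plan is to establish the two assertions of the theorem separately. First I would show that $\out(\pi_1(M))$ acts properly discontinuously on the set $PS$ of separable-stable representations, and then that $PS$ meets $\partial AH(M)$, so that it strictly contains $\mathrm{int}(AH(M))$. For the proper discontinuity I would reduce everything to three ingredients: (i) $PS$ is open in $\mathcal X(\pi_1(M),\pslc)$ and, more precisely, every point of $PS$ has a neighborhood on which the separable-stability constants $(K,A)$ can be taken uniform; (ii) $PS$ is $\out(\pi_1(M))$-invariant; and (iii) a length comparison: on any neighborhood as in (i) there is $\kappa$ with $\tfrac1\kappa\,\|g\|_S-\kappa\le \ell_\rho(g)\le \kappa\,\|g\|_S+\kappa$ for every separable $g$, where $\|\cdot\|_S$ is the conjugacy-minimized word length in $C_S(\Gamma)$ and $\ell_\rho(g)$ the translation length of $\rho(g)$ on $\H^3$.

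Ingredient (ii) is algebraic: by Grushko and Kurosh an automorphism of $\pi_1(M)$ permutes the conjugacy classes of the freely indecomposable free factors, and in the uniquely freely decomposable case it also preserves the annular splittings coming from simple closed curves on the $S_i$ (here one uses that an isomorphism $\pi_1(S_i)\to\pi_1(S_j)$ is induced by a homeomorphism, so sends simple closed curves to simple closed curves); hence it preserves the set of separable elements. Since separable-stability is intrinsic to the images of the geodesics $\mathcal S_S$ and is independent of the generating set and basepoint (Lemma 16 of Lee \cite{Lee}), precomposition by an automorphism carries $PS$ to itself. Ingredient (iii) is the Morse lemma: after conjugating $g$ to minimal length (still separable) it has a periodic quasi-axis in $C_S(\Gamma)$ of period comparable to $\|g\|_S$ passing through a bounded neighborhood of the identity; its image under the orbit map $\tau_{\rho,x}$ is a $(K,A)$-quasi-geodesic invariant under $\rho(g)$, hence fellow-travels the axis of $\rho(g)$ (in particular $\rho(g)$ is loxodromic once $\|g\|_S$ is large), and comparing translation lengths gives the lower bound; the upper bound is just that $\tau_{\rho,x}$ is Lipschitz.

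Granting (i)--(iii), proper discontinuity goes as follows. Let $C\subset PS$ be compact; covering it by finitely many neighborhoods as in (i) yields uniform $(K,A)$, hence by (iii) a uniform $\kappa$. If $\phi\in\out(\pi_1(M))$ has $\phi\cdot C\cap C\neq\emptyset$, pick $[\rho]\in C$ with $[\rho\circ\phi^{-1}]\in C$. Since translation length is a conjugacy invariant and a continuous function on the character variety, for every separable $g$ one has $\ell_\rho(\phi^{-1}(g))=\ell_{\rho\circ\phi^{-1}}(g)\le\max_{[\sigma]\in C}\ell_\sigma(g)$; applying (iii) on both sides and using that $\phi^{-1}(g)$ is again separable by (ii) gives $\|\phi^{-1}(g)\|_S\le\kappa'\|g\|_S+\kappa'$ for every separable $g$, with $\kappa'$ depending only on $C$. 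Thus any lift $\Phi\in\aut(\pi_1(M))$ of $\phi$ distorts conjugacy lengths of separable elements by a uniformly bounded amount. I would then feed this into the Fouxe-Rabinovitch / McCullough--Miller description of $\out$ of the free product $\pi_1(M)=G_1*\cdots*G_k*F_\ell$: the induced automorphism of each surface factor $G_i$ (well defined up to an inner automorphism, since $G_i$ is quasiconvex and preserved up to conjugacy) is a quasi-isometry of $C(G_i)$ fixing the identity with uniform constants, hence lies in a finite set; the induced automorphism of the free part is constrained on primitive conjugacy classes in a way that, exactly as in Minsky \cite{Min}, leaves only finitely many possibilities; and the residual ``twist'' part of $\Phi$ is a product of partial conjugations whose conjugating elements have uniformly bounded length, hence also finite. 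So $\{\phi:\phi\cdot C\cap C\neq\emptyset\}$ is finite.

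For the strict inclusion: a representation in $\mathrm{int}(AH(M))$ is convex cocompact, so its orbit map $C_S(\Gamma)\to\H^3$ is a quasi-isometric embedding and in particular every geodesic, so every element of $\mathcal S_S$, maps to a uniform quasi-geodesic; hence $\mathrm{int}(AH(M))\subset PS$. To make the inclusion strict, take $[\rho]\in\mathrm{int}(AH(M))$ and pinch a Masur-domain simple closed curve $\gamma$ on $\partial_e M$: the resulting geometrically finite representation lies on $\partial AH(M)$, its only parabolic locus is $\gamma$, which is disc-busting (annulus-busting in the uniquely freely decomposable case) because it lies in the Masur domain, and it carries no ending lamination, so it is separable-stable by Theorem \ref{thm:main} (respectively Theorem \ref{thm:special case}), or directly by the pinching construction of Lee \cite{Lee2}. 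The main obstacle in the whole argument is ingredient (i): since $\mathcal S_S$ is infinite, plain stability of quasi-geodesics must be promoted to a uniform statement, which requires a local-to-global (``$R$-local implies global'') principle for quasi-geodesics in $\H^3$ together with a finiteness argument identifying the finitely many bounded-length configurations that actually need to be checked, in the spirit of the primitive-stable case in Minsky \cite{Min}.
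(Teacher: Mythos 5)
You should first be aware that the paper does not prove this statement at all: it is quoted verbatim from Lee \cite{Lee2}, so the comparison is really with the proof there, which (like Minsky's primitive-stable argument \cite{Min}) follows exactly the template you lay out --- openness of the separable-stable set with locally uniform constants, invariance under $\out(\pi_1(M))$, a two-sided comparison between conjugacy word length and translation length, and then a finiteness argument; the inclusion of $\mathrm{int}(AH(M))$ via convex cocompactness and the production of boundary points by pinching Masur domain curves are also the intended route. So the strategy is right.

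The genuine gap is in your finiteness step. A partial conjugation (conjugate one factor $G_i$ of $\pi_1(M)=G_1*\cdots*G_k*F_\ell$ by a word $w$ and fix the other factors) acts \emph{trivially} on the conjugacy class of every element lying in a single factor, so bounded distortion of those separable classes gives no control whatsoever on $w$. To bound the ``twist part'' you must use separable elements whose normal form visits several factors (for instance $g_ig_j$ with $g_i\in G_i$, $g_j\in G_j$, which lies in the proper free factor $G_i*G_j$ only when there are at least three factors, or a nontrivial free part, to spare), and extracting a bound on $w$ from the distortion of such classes is precisely where the work lies; your clause ``hence also finite'' asserts the conclusion rather than proving it. The problem is most acute in the uniquely freely decomposable case $M=(S_1\times I)\,\natural\,(S_2\times I)$: there the separable elements are defined via annular splittings $A*_{\langle c\rangle}B$ rather than free splittings, not every element of a surface factor is separable (filling conjugacy classes on $S_i$ are not), and ``exactly as in Minsky'' has no counterpart --- Whitehead's algorithm must be replaced by an argument about simple closed curves on the $S_i$, which is why the paper treats this case separately throughout. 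A smaller point: in ingredient (iii) you do not need $\|g\|_S$ large to conclude that $\rho(g)$ is loxodromic; separable-stability alone forces it (otherwise the $\rho(g)$-orbit of a point on the image of the axis violates the lower quasi-geodesic bound), and you do need this intrinsic argument, since $PS$ contains non-discrete and non-faithful representations and no discreteness can be invoked.
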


\subsection{The Whitehead graph}\label{white}
In this section, we describe Otal's (\cite{Ot}) generalization of the Whitehead graph for a free group to the Whitehead graph for a compression body, which we will use in proving the sufficient condition.

Let $M$ be a nontrivial compression body without toroidal boundary components.  A \emph{system of meridians} $\alpha$ on $M$ is a collection of pairwise nonisotopic and pairwise disjoint meridians $\alpha_i$ bounding discs $D_i$ such that $M \setminus (\cup \mathcal{N}(D_i))$ is a disjoint union of trivial $I$-bundles over closed surfaces, where $\mathcal{N}(D_i)$ is a regular neighborhood of $D_i$.

By Thurston's Hyperbolization Theorem (see Kapovich \cite{kap}) and Marden (\cite{mar}), we can fix $\sigma$ a convex cocompact representation of $\pi_1(M)$ such that $\overline N_\sigma= N_\sigma \cup \partial_C N_\sigma$ is homeomorphic to $M$, where $\partial_C N_\sigma$ is the conformal boundary of $N_\sigma.$  Let $\Lambda(\sigma)$ be the limit set of $\sigma(\Gamma).$  Let $\mu$ be a closed subset of $\Lambda(\sigma) \times \Lambda(\sigma)$ that is $\sigma(\Gamma)$-invariant and also invariant under switching the two factors.  In this paper, $\mu$ will be one of the following two examples.
\begin{itemize}
\item If $\gamma$ is a closed geodesic in $N_\sigma$, then $\mu_\gamma$ is the set of endpoints of all the lifts of $\gamma.$
\item If $\lambda$ is a lamination on $\partial M$ that is in tight position with respect to $\alpha$, then $\mu_\lambda$ is the set of endpoints of all lifts of all leaves of $\lambda.$
\end{itemize}
See the discussion before Proposition \ref{discbusting} for the definition of a lamination in tight position and Lemma \ref{endpoints} to see why $\mu_\lambda$ is well-defined.

Identify $\partial M$ with $\partial_C N_\sigma$.  Let $\alpha$ be a system of meridians bounding the discs $D=D_1 \cup \cdots \cup D_n$ and suppose $\overline N_\sigma-\mathcal{N}(D)$ is the disjoint union of $\Sigma_1 \times I, \ldots, \Sigma_k \times I$ where each $\Sigma_i$ is a closed surface of genus at least two. The \emph{Whitehead graph for $M$ of $\mu$ with respect to $\alpha$}, denoted $Wh(M, \alpha, \mu),$ is a collection of not necessarily connected graphs, $Wh(M, \alpha, \mu)^{\Sigma_1}, \ldots, Wh(M, \alpha, \mu)^{\Sigma_k},$  where the elements in the collection are in one-to-one
correspondence with the components of $\overline N_\sigma-\mathcal{N}(D)$.  In $\overline N_\sigma-\mathcal{N}(D)$, there are two copies $D_i^+$ and $D_i^-$ of each $D_i$ in $D$.  Given a component $\Sigma \times I$
of $\overline N_\sigma-\mathcal{N}(D)$, the vertices in the corresponding graph $Wh(M, \mu, \alpha)^{\Sigma}$
are in one-to-one correspondence with the components of  $D_i^{\pm}$  in the frontier
of $\Sigma \times I$.  Abusing notation, relabel the vertices $D_1, \ldots D_m.$  Fix a Jordan curve $C$ in $\Lambda(\sigma(\Gamma))$ that is invariant under a conjugate
of $\pi_1(\Sigma)$, which we will continue to denote $\pi_1(\Sigma)$.  Let $F$ denote the boundary component of
$\Sigma \times I$ coming from $\partial_eM$.  Fix a lift
$\widetilde{ \partial D_i}$ of each $\partial D_i$ in $\partial \H^3$ such that
$\widetilde {\partial D_i}$ lies in 
the component of the preimage of $F$ in $\partial \H^3$
containing $C$ on its boundary.  Let $U_i$ be the open set in $\partial \H^3$, bounded by $\widetilde{ \partial D_i}$ not containing $C$.  The edges from $D_{i}$ to $D_{j}$ will be in one-to-one correspondence with elements $g$ in $\pi_1(\Sigma)$ such that $(U_i, g U_j) \cap \mu$ is nonempty.  We will denote such an edge $(U_i, g U_j)$.  Although these edges are directed, for each edge from $U_i$ to $U_j$ labeled
$g$, there is an edge from $U_j$ to $U_i$ labeled $g^{-1}$.

\begin{dfn} [Otal] A connected component of $Wh(M, \alpha, \mu)^\Sigma$ is \emph{strongly connected} if there exists a cycle that represents a nontrivial element of
$\pi_1(\Sigma)$.  A connected component of $Wh(M, \alpha, \mu)^\Sigma$ has a \emph{strong cutpoint} if we can express the graph as the
union of two graphs $G_1$ and $G_2$ that intersect in a single vertex such that
either $G_1$ or $G_2$ is not strongly connected.
\end{dfn}

We take the convention that a cycle $$(U_{i_1}, g_1U_{i_2}), (U_{i_2},g_2U_{i_3}),
\ldots, (U_{i_k},g_k U_{i_1})$$ corresponds to the group element $g_1\cdots g_k$.

Although we made several choices when defining $Wh(M, \alpha, \mu)$, the two properties defined above, strong connectedness and the presence of a strong cutpoint are independent of these choices (see Section 3.1.2 in Lee \cite{Lee2}).

\subsection{Laminations in tight position}

Otal in \cite{Ot}, also, gave a dichotomy between the Whitehead graph of a separable element and the Whitehead graph of a disc-busting lamination in tight position with respect to a system of meridians.  In this section, we define and collect the relevant properties of a lamination in tight position.  The results in this section are essentially all due to Otal in \cite{Ot}.  Since \cite{Ot} is difficult to obtain for each result we either give an alternate source containing a proof or we provide a proof.

A lamination $\lambda$ is in \emph{tight position with respect to a system of meridians $\alpha$} if there are no waves disjoint from $\lambda$ where a \emph{wave} is an arc $k$ satisfying the following two conditions:
\begin{enumerate}
\item $k$ has endpoints on $\alpha$, but its interior is disjoint from $\alpha$ and,
\item $k$ is homotopic, relative to its endpoints, in $M$ but not in $\partial M$ into $\alpha.$
\end{enumerate}

For any disc-busting lamination $\lambda$ we can find a system of meridians $\alpha$ such that $\lambda$ is in tight position with respect to $\alpha.$  Otal proved this for Masur domain laminations but his proof applies in this more general case.

\begin{prop}[Otal \cite{Ot}, Theorem 1.3] \label{discbusting} If $\lambda$ is a disc-busting lamination, there exists a system of meridians
$\alpha$ with respect to which $\lambda$ is in tight position.
\end{prop}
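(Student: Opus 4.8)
The plan is to build the system of meridians $\alpha$ by a greedy ``tightening'' procedure that mimics Otal's argument for Masur domain laminations, but where the only input we use is that $\lambda$ is disc-busting. First I would start from an arbitrary system of meridians $\alpha_0$ bounding discs $D_0 = D_{0,1} \cup \cdots \cup D_{0,n}$, chosen so that $\overline N_\sigma \setminus \mathcal{N}(D_0)$ is a disjoint union of trivial $I$-bundles over closed surfaces of genus at least two. Isotope $\lambda$ so that it meets $\alpha_0$ transversely and minimally, i.e.\ so that the geometric intersection number $i(\lambda,\alpha_0)$ is realized; this also rules out bigons between leaves of $\lambda$ and the curves of $\alpha_0$. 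If $\lambda$ is already in tight position with respect to $\alpha_0$ we are done, so suppose there is a wave $k$: an arc with endpoints on $\alpha_0$, interior disjoint from $\alpha_0$, and disjoint from $\lambda$, which is homotopic rel endpoints in $M$ (but not in $\partial M$) into $\alpha_0$.

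The key step is a \emph{complexity reduction}: I would perform a Whitehead-type surgery on the system of meridians along the wave $k$. Because $k$ is homotopic into $\alpha_0$ in $M$, its endpoints lie on meridian curves (say $\alpha_{0,i}$, $\alpha_{0,j}$, possibly $i=j$), and there is a band $k\times I$ in $M$ whose core is $k$; cutting the corresponding disc(s) $D_{0,i}$ (and $D_{0,j}$) along this band and recombining produces a new properly embedded planar surface whose boundary is a new system of curves. One checks that the resulting curves are again meridians (they bound discs, since the surgery is supported on a band in $M$) and that, after discarding any inessential or redundant components, they again form a system of meridians $\alpha_1$ for which the complement is trivial $I$-bundles over closed surfaces. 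The point of choosing the wave to be disjoint from $\lambda$ is that $\lambda$ survives the surgery with strictly fewer intersections: the arcs of $\lambda$ cut off by $\alpha_0$ on the ``far'' side of $k$ get shortened, so $i(\lambda,\alpha_1) < i(\lambda,\alpha_0)$, or at least a suitable lexicographic complexity (number of intersection points, then number of meridian components meeting $\lambda$) strictly decreases. Here is where disc-bustingness enters: since $i(\lambda,\alpha)\geq \eta > 0$ for every unweighted meridian, the complexity is bounded below and cannot drop forever, so after finitely many surgeries we reach a system of meridians $\alpha$ with no wave disjoint from $\lambda$ — that is, $\lambda$ is in tight position with respect to $\alpha$. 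The disc-busting hypothesis also guarantees that at no stage does $\lambda$ become disjoint from all of $\alpha$, so the intermediate systems remain genuine systems of meridians and the process is well-defined.

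I expect the main obstacle to be the bookkeeping in the surgery step: verifying that after cutting the disc system along the band neighborhood of $k$ and discarding inessential components one still has a \emph{system of meridians} in the precise sense required (complement a disjoint union of trivial $I$-bundles over closed surfaces of genus $\geq 2$, curves pairwise disjoint and pairwise non-isotopic), and that the complexity really strictly decreases rather than merely not increasing. One has to be careful about waves with both endpoints on the same meridian versus on two distinct meridians, about waves that are ``nonseparating'' in the complement, and about the possibility that a surgery splits off a sphere or a solid-torus piece (the latter handled exactly as in the proof of Lemma \ref{lem:hnn}). Modulo this care, the argument is a finite-complexity descent, and I would organize it so that the single invariant $(i(\lambda,\alpha),\,\#\{\text{components of }\alpha\text{ meeting }\lambda\})$ in lexicographic order strictly decreases under each wave surgery, with the lower bound coming from $i(\lambda,\alpha)\geq\eta$.
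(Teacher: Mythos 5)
Your overall strategy --- descend on $i(\lambda,\alpha)$ by surgering the meridian system along a wave disjoint from $\lambda$, with disc-bustingness forcing termination --- is the paper's strategy, but as written your termination step has a genuine gap. The quantity $i(\lambda,\alpha)$ is a transverse measure, hence a real number rather than an integer, and your fallback complexity ``number of intersection points'' is typically infinite when $\lambda$ is not a multicurve; so ``strictly decreases and is bounded below'' does not give termination (a decreasing sequence of reals bounded below simply converges). The point you are missing is that the decrement is at least $\eta$ at \emph{every} step: because the wave $k$ is homotopic into $\alpha_i$ in $M$ but not in $\partial M$, \emph{both} curves $\alpha_i'=k\cup k'$ and $\alpha_i''=k\cup k''$ produced by the surgery are meridians (here $k',k''$ are the two subarcs into which the endpoints of $k$ cut $\alpha_i$); since $k$ misses $\lambda$, the retained curve meets $\lambda$ only along $k'$, so the total intersection drops by at least $i(\lambda,\alpha_i'')\geq\eta$. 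It is this definite decrement, not the lower bound on $i(\lambda,\alpha)$ alone, that makes the descent finite.

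The second issue is the step you flag as ``bookkeeping'' but do not resolve: which surgered curve to keep. The paper replaces $\alpha_i$ by exactly one of $\alpha_i'$, $\alpha_i''$ and proves that at least one choice again yields a system of meridians: $\alpha_i,\alpha_i',\alpha_i''$ cobound a pair of pants on $\partial M$, so by irreducibility the discs $D_i,D_i',D_i''$ cobound a ball, and $M-(D\cup D_i'\cup D_i'')$ consists of trivial $I$-bundles together with two balls; whichever of $D_i'^-$, $D_i''^-$ lies on the second ball determines which replacement produces a complement with no ball component. Your ``discard inessential or redundant components'' does not capture this choice, and keeping both new curves would in general leave a ball in the complement. (A smaller point: a wave necessarily has both endpoints on a single component $\alpha_i$, since an arc homotopic rel endpoints into $\alpha$ cannot join two disjoint circles, so the two-component case you worry about does not arise; and the essentiality of $\alpha_i',\alpha_i''$ in $\partial M$ --- not just that they bound discs in $M$ --- comes precisely from the clause that $k$ is not homotopic into $\alpha$ within $\partial M$.)
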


\begin{proof}
Suppose that $\lambda$ is not in tight position with respect to a system of meridians $\alpha$.  Let $k$ be a wave and suppose its endpoints lie on $\alpha_i.$  Then the endpoints of $k$ split $\alpha_i$ into two arcs $k'$ and $k''$ such that $\alpha_i=k' \cup k''$ and $k' \cap k''$ are the endpoints of $k.$  Since $k$ is homotopic into $\alpha_i$ through $M$ but not in $\partial M,$ we have that $\alpha_i'=k \cup k'$ and $\alpha_i''=k \cup k''$ are both meridians.  Since $\lambda$ is disc-busting, $i(\lambda, \alpha_i')>\eta$ and $i(\lambda, \alpha_i'')>\eta.$  By construction both $\alpha_i'$ and $\alpha_i''$ are disjoint from $\alpha_j$ for any $j.$
Replacing $\alpha_i$ with either $\alpha_i'$ or $\alpha_i''$ will produce a new collection of meridians $\alpha'$ and $\alpha''$ respectively, whose intersection number with $\lambda$ is at least $\eta$ less than $i(\lambda, \alpha).$

We claim that at least one of these new collections of meridians $\alpha'$ or $\alpha''$ is a system of meridians.  By construction $\alpha_i, \alpha_i' $ and $\alpha_i''$ bound a pair of pants on $\partial M.$  Since they are all meridians and $M$ is irreducible, the discs they bound, $D_i, D_i'$ and $D_i'',$ bound a ball in $M$.
So $M-(D_i \cup D_i' \cup D_i'')$ contains a component that is a ball $B$.  We will use a $+$ superscript to denote the copy of $D_i$ (or $D_i', D_i''$) that lies on the boundary of the ball, i.e. $D_i^+, D_i'^+, D_i''^+$ lie on the boundary of $B.$  Since $M-D$ is a collection of trivial $I$-bundles over closed surfaces, $M-(D \cup D_i' \cup D_i'')$ must consist of a collection of trivial $I$-bundles over closed surfaces and two balls, one of which is the ball $B$ described above.  The other ball must contain either $D_i'^-$ or $D_i''^-$ on its boundary (possibly both).  If it contains $D_i'^-,$ then $\alpha''$ will be a system of meridians, and vice versa.

So, replacing $\alpha_i$ with one of $\alpha_i'$ and $\alpha_i''$ results in a system of meridians whose intersection number with $\lambda$ is at least $\eta$ less than $i(\lambda, \alpha).$  If $\lambda$ is not in tight position with respect to this new system of meridians, repeat this process.  Since $i(\lambda, \beta) \geq \eta$ for any system of meridians $\beta$, eventually, this process must terminate.
\end{proof}

An important property of disc-busting, minimal laminations in tight position with respect to a system of meridians $\alpha$ is that any lift of a leaf has well-defined endpoints.

\begin{lem}[Kleineidam-Souto \cite{KS}, Lemma 1 or Otal \cite{Ot}, Lemma 1.9] \label{endpoints}
Let $M$ be a nontrivial compression body.  Let $\sigma:\pi_1(M) \rightarrow \pslc$ be a convex cocompact representation uniformizing $M$.  Let $\lambda$ be a lamination on $\partial_C N_\sigma$ in tight position with respect to $\alpha.$  Let $l$ be a leaf $\lambda$ such that any half leaf of $l$ intersects $\alpha.$  Then, any lift $\tilde l$ of $l$ to $\widetilde{\partial_C N_\sigma}$ has two well-defined endpoints in $\Lambda(\sigma).$
\end{lem}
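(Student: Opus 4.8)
The plan is to localize the two ends of $\tilde l$ by using the system of meridians as a scaffolding of circles at infinity that shrink to points, and to show that each half of $\tilde l$ is eventually trapped inside a strictly decreasing nested sequence of such discs. Throughout, identify $\widetilde{\partial_C N_\sigma}$ with the domain of discontinuity $\Omega(\sigma):=\partial\H^3\setminus\Lambda(\sigma)$, equip $\partial\H^3$ with the spherical metric, and write $\mathcal C$ for the preimage in $\widetilde{\partial_C N_\sigma}$ of the meridian system $\alpha=\alpha_1\cup\cdots\cup\alpha_n$. Since each $\alpha_i$ is null-homotopic in $M$, the set $\mathcal C$ is a $\sigma(\Gamma)$-invariant family of pairwise disjoint Jordan circles, namely the translates $g\,\widetilde{\partial D_i}$ with $g\in\sigma(\Gamma)$ and $1\le i\le n$; each such circle bounds an embedded meridian disc in $\H^3$, hence divides $\partial\H^3$ into two discs, and the $\widetilde{\partial D_i}$ form finitely many $\sigma(\Gamma)$-orbits of compact circles contained in $\Omega(\sigma)$.

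Two ingredients drive the argument. The first is a diameter estimate from convex cocompactness: since $\sigma(\Gamma)$ acts on $\partial\H^3$ as a convergence group and $\Omega(\sigma)/\sigma(\Gamma)$ is compact, for every compact $K\subset\Omega(\sigma)$ and every $\epsilon>0$ only finitely many $g\in\sigma(\Gamma)$ satisfy $\mathrm{diam}(gK)\ge\epsilon$ — the standard fact that translates of a compact subset of the domain of discontinuity have diameters tending to $0$. Applied to $K=\widetilde{\partial D_1}\cup\cdots\cup\widetilde{\partial D_n}$, this shows that all but finitely many circles of $\mathcal C$ have diameter below any prescribed $\epsilon$. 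The second ingredient is the combinatorial input from tight position: by the no-wave hypothesis (and, in the applications, disc-busting, which rules out a meridian leaf since a lamination has zero intersection with its own leaves), $\tilde l$ escapes every compact subset of $\widetilde{\partial_C N_\sigma}$, each of its half-leaves crosses $\mathcal C$ infinitely often — this is where we use that every half-leaf of $l$ meets $\alpha$ — and, crucially, one can pass to a subsequence $\tilde c_1,\tilde c_2,\dots$ of crossed circles which are translates $g_k\,\widetilde{\partial D_{i_k}}$ with $g_k\to\infty$ in $\sigma(\Gamma)$ and such that, writing $\Delta_k$ for the complementary disc of $\tilde c_k$ into which the half-leaf runs, one has $\overline{\Delta_1}\supset\overline{\Delta_2}\supset\cdots$ with the whole tail of the half-leaf contained in $\overline{\Delta_k}$. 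Granting this, $\mathrm{diam}(\tilde c_k)=\mathrm{diam}(\partial\Delta_k)\to0$ by the first ingredient, and a short planar observation finishes the proof: a Jordan curve of spherical diameter $<\delta$ bounds exactly one complementary disc of diameter $<2\delta$, so if for infinitely many $k$ the tail side $\Delta_k$ were the \emph{large} complementary disc of $\tilde c_k$, then the sets $\partial\H^3\setminus\overline{\Delta_k}$ would form a nested \emph{increasing} family of nonempty open discs with diameters tending to $0$, which is impossible. Hence $\mathrm{diam}(\overline{\Delta_k})\to0$, so $\bigcap_k\overline{\Delta_k}$ is a single point $p$; since the half-leaf escapes every compact subset of $\Omega(\sigma)$, $p\in\partial\Omega(\sigma)\subset\Lambda(\sigma)$, and the half-leaf converges to $p$. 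The same argument on the other half-leaf gives the second endpoint.

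The step I expect to be the real obstacle is the extraction of the nested, ``deepening'' subsequence $\tilde c_k$: turning ``$\lambda$ is in tight position with respect to $\alpha$'' — a statement about arcs in the complement of $\lambda$ — into the statement that a half-leaf's successive meridian crossings march off to infinity in $\sigma(\Gamma)$ without the image of $\tilde l$ doubling back across the deep (small-diameter) circles. Concretely, one would record the sequence of pieces of $\widetilde{\partial_C N_\sigma}\setminus\mathcal C$ (the lifts of the components of $\partial_e M$ cut along $\alpha$) traversed by $\tilde l$ and use the absence of waves to see that each traversal is essential, hence that consecutive pieces are genuinely distinct and the cut-off discs nest; this is essentially Otal's Lemma~1.9 \cite{Ot}, whose argument we reproduce. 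Everything else — the convergence-group diameter estimate, the elementary topology of nested discs, and the identification of $\mathcal C$ with the translates of the $\widetilde{\partial D_i}$ — is routine.
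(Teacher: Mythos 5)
The paper offers no proof of this lemma at all --- it simply cites Kleineidam--Souto and Otal --- so there is nothing internal to compare you against; but your outline is indeed the standard strategy behind those references: the successive meridian crossings of a half-leaf cut off a nested sequence of discs at infinity whose boundary circles are translates of finitely many compact circles in $\Omega(\sigma)$, hence have diameters tending to zero, so the intersection is a single point of $\Lambda(\sigma)$. The steps you call routine really are routine, with one small repair: to see that the limit point $p$ lies in $\Lambda(\sigma)$, rather than invoking ``the half-leaf escapes every compact set'' (which is close to assuming the conclusion), observe that each closed disc $\overline{\Delta_k}$ meets $\Lambda(\sigma)$ --- both complementary discs of a lift of a meridian contain limit points, since otherwise the meridian would bound a disc in $\partial M$ --- so the singleton $\bigcap_k\overline{\Delta_k}$ is a limit point. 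Also, disc-busting plays no role here and need not be invoked.

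The genuine gap is exactly the step you flag and then defer. Once you know that two \emph{consecutive} crossing circles $\tilde c_k$ and $\tilde c_{k+1}$ are distinct, the nesting is automatic: $\tilde c_{k+1}$ is a frontier circle of the component of $\Omega(\sigma)\setminus\mathcal C$ that the half-leaf enters after crossing $\tilde c_k$, and that component lies in $\Delta_k$, so $\tilde c_{k+1}\subset\Delta_k$ and $\Delta_{k+1}\subset\Delta_k$; distinctness of all the circles and the diameter estimate then follow. So the entire lemma reduces to: an arc of a leaf with interior disjoint from $\alpha$ cannot return to the same lift of a meridian. If it did, the projected arc $a$ would have endpoints on some $\alpha_i$, be homotopic into $\alpha_i$ in $M$ (through the meridian disc) but not in $\partial M$ (no bigons between geodesics) --- that is, $a$ satisfies conditions (1) and (2) in the definition of a wave. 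But the paper's definition of tight position only excludes waves \emph{disjoint from} $\lambda$, and $a$ is contained in $\lambda$. Converting a returning leaf arc into a wave disjoint from $\lambda$ (by an outermost argument in the disc cut off by $\tilde a$ and an arc of $\tilde c_k$, pushing off to the side of $a$ not accumulated by $\lambda$) is the actual content of Otal's Lemma 1.9, and ``whose argument we reproduce'' does not discharge it: as written, the central implication of the lemma is asserted rather than proved.
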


Propositions \ref{discbusting} and Lemma \ref{endpoints} imply that if $\lambda$ is a disc-busting ending lamination, then $\mu_\lambda$, the set of endpoints of lifts of leaves of $\lambda,$ is well-defined.  In particular, we will refer to the Whitehead graph of $\lambda$ as the Whitehead graph of $\mu_\lambda.$

Otal proved the following two results, which provide a dichotomy between separable elements and disc-busting laminations.
\begin{prop}[Otal \cite{Ot}, Proposition A.3 or see Lee \cite{Lee2}, Proposition 18] \label{separableelement} Let $M$ be a nontrivial compression body that is not the boundary connect sum of two trivial $I$-bundles over closed surfaces. If $g$ is separable, then for any system of meridians $\alpha$ some connected component of $Wh(M, \alpha, \mu_g)$ is either not strongly connected, or has a strong cut-point.
\end{prop}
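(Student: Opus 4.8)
The plan is to argue by contradiction: suppose $g$ is separable but for some system of meridians $\alpha$ every connected component of $Wh(M,\alpha,\mu_g)$ is strongly connected and has no strong cut‑point. I would then use this to manufacture a disc that $g$ can be homotoped off of, but in a way that is incompatible with the Whitehead combinatorics, or conversely use the disc (which exists because $g$ is separable) to produce a wave and hence a cut‑point. Concretely, since $M$ is not uniquely freely decomposable, by Lemma~\ref{lem:freeprod} and Lemma~\ref{lem:hnn} separability of $g$ is equivalent to the associated loop being homotopic into $M$ missing some essential disc $E$. After an isotopy we may assume $\partial E$ is in minimal position with respect to the system of meridians $\alpha$, or even that $E$ has been absorbed into the disc system: the key point is to analyze how the geodesic representative of $g$, pushed into the $I$‑bundle pieces $\Sigma_i\times I$ of $\overline N_\sigma-\mathcal N(D)$, crosses the copies $D_i^{\pm}$.

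First I would set up the translation between the topological picture in $M$ and the combinatorial picture at infinity. A based loop representing $g$ that misses an essential disc lifts, in the universal cover, to a path whose endpoints are the fixed points $\tilde g_-,\tilde g_+$ of $\sigma(g)$ acting on $\Lambda(\sigma)$, and $\mu_g$ is exactly the orbit of the pair $(\tilde g_-,\tilde g_+)$. The loop's successive crossings of the discs $D_i$ determine, for each $\Sigma_i\times I$ component it enters, a cyclic word in the generators of $\pi_1(\Sigma_i)$ together with the data of which arcs $U_j$ its lift enters and exits; this is precisely a cycle in $Wh(M,\alpha,\mu_g)^{\Sigma_i}$. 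So the cyclically reduced representative of $g$ traces out a closed edge‑path in the union of the Whitehead graphs.

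Next I would exploit the disc $E$. Because $g$ misses $E$, the loop representing $g$ lies in one side of $M$ cut along $E$; equivalently, in the universal cover there is a lift $\tilde E$ whose boundary circle $\widetilde{\partial E}\subset\partial\H^3$ separates $\partial\H^3$ into two arcs, one of which contains $\{\tilde g_-,\tilde g_+\}$ and hence, by invariance, the entire orbit $\mu_g$ under the cyclic group $\langle g\rangle$, while the "other side'' of that circle is missed by $\mu_g$ along the relevant portion of its orbit. Combining $\widetilde{\partial E}$ with the fixed lifts $\widetilde{\partial D_i}$ used to define the graph, the circle $\widetilde{\partial E}$ either is disjoint from all the $\widetilde{\partial D_i}$ (so it can be incorporated into the disc system, contradicting that $\alpha$ was already a system of meridians, unless the corresponding $I$‑bundle piece degenerates) or it crosses some $\widetilde{\partial D_i}$, and such a crossing is exactly a wave: an arc of $\partial E$ with endpoints on $\alpha$, interior disjoint from $\alpha$, homotopic through $M$ but not through $\partial M$ into $\alpha$. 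In the first case the existence of $E$ forces one of the graphs $Wh(M,\alpha,\mu_g)^{\Sigma_i}$ to have a vertex that is not traversed by any cycle through the rest of the graph — i.e. a strong cut‑point or a non–strongly‑connected component — which is the desired conclusion. In the second case, as in the proof of Proposition~\ref{discbusting}, I would do surgery on $\alpha$ along the wave to get a new meridian, and track the effect on $\mu_g$: the separating circle $\widetilde{\partial E}$ splits the Whitehead graph at a vertex, again producing a strong cut‑point, because $\mu_g$ (being contained on one side of $\widetilde{\partial E}$) cannot supply edges crossing that separation.

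The main obstacle I expect is the bookkeeping in the case distinction above — precisely controlling how the separating circle $\widetilde{\partial E}$ interacts with the chosen lifts $\widetilde{\partial D_i}$ and with $C$, and showing that "the orbit $\mu_g$ lies on one side'' really does obstruct strong connectedness rather than merely rearranging edges. In particular one must rule out that $\widetilde{\partial E}$ separates the fixed points of some $g$‑conjugate in a way that still allows a cycle through every vertex; handling this seems to require that $E$ can be chosen so that $\partial E$ is essential in the relevant $\Sigma_i$ rather than peripheral, which is where the hypothesis that $M$ is not the boundary connect sum of two $I$‑bundles (so that the free decomposition realizing separability is genuinely nontrivial and the disc is not forced to be parallel to a meridian) does the work. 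Since Otal's original argument is hard to access, I would model the final combinatorial step on the analogous lemma for free groups (Whitehead/Stallings), where a separable element's Whitehead graph always has a cut vertex, and upgrade it using Lemma~\ref{endpoints} to guarantee that the endpoint data defining $\mu_g$ is well defined in the compression‑body setting.
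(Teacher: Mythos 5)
First, note that the paper does not actually prove Proposition~\ref{separableelement}: it is quoted from Otal's th\`ese (Proposition A.3), with Lee \cite{Lee2}, Proposition 18, offered as the accessible source, so there is no in-paper argument to compare yours against. Your outline does follow the strategy of those proofs (and of the classical Whitehead--Stallings argument they generalize): realize separability of $g$ by an essential disc $E$ via Lemma~\ref{lem:freeprod}, put $E$ in minimal position with respect to the discs bounded by $\alpha$, and split into the cases ``$E$ disjoint from the system'' and ``an outermost arc of $E\cap D_i$ cuts off a wave,'' using the resulting partition of the vertices $D_j^{\pm}$. The skeleton is the right one.

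The genuine gap is precisely the step you defer as ``bookkeeping'': nothing in your write-up actually produces a \emph{strong} cut point or the failure of \emph{strong} connectedness, and in the compression-body setting that is the entire content of the proposition (for a handlebody the ordinary graph-theoretic notions suffice; here the complementary pieces carry surface groups). Two concrete problems. First, in the disjoint case there is no contradiction with $\alpha$ being a system of meridians: essential discs disjoint from the whole system exist in general, namely discs whose boundary is a non-peripheral curve on the exterior boundary of some $\Sigma\times I$ piece encircling a subset of the scars $D_j^{\pm}$ (e.g.\ the disc realizing $\pi_1(S_2)*\bigl(\pi_1(S_1)*\pi_1(S_3)\bigr)$ in a three-fold boundary connect sum cut along the two obvious meridians). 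The correct conclusion there is that $\partial E$ bounds a disc $\Delta\subset\Sigma$ containing some of the vertices, and any cycle of $Wh(M,\alpha,\mu_g)^{\Sigma}$ supported on those vertices represents an element of $\pi_1(\Delta)=\{1\}$, so one must show that $\mu_g$ confines all its edges to one side to conclude that some component is not strongly connected or that a vertex is a strong cut point; the analogous claim (one of the two subgraphs meeting at $D_i$ only carries cycles supported in a planar subsurface) is what is needed in the wave case, and you assert rather than prove it. Second, the statement that ``the entire orbit $\mu_g$ lies on one side of $\widetilde{\partial E}$'' is false as written: $\mu_g$ is the full $\sigma(\Gamma)$-orbit of the fixed-point pair of $g$ and meets both sides of any single lift. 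The usable statement is that no pair in $\mu_g$ has its two coordinates separated by any lift of $\partial E$, and one must check that this still obstructs edges of the Whitehead graph after translating by the edge labels $h\in\pi_1(\Sigma)$, since an edge $(U_i,hU_j)$ crosses the partition only when some \emph{translate} of $\widetilde{\partial E}$ separates $U_i$ from $hU_j$. Until these two points are carried out, the argument does not close.
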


\begin{prop}[Otal \cite{Ot}, Proposition A.5 or see Lee \cite{Lee2}, Proposition 19]\label{otal}
If $\lambda$ is a disc-busting lamination in tight position with respect to a system of meridians $\alpha,$ then the Whitehead graph of $\lambda$ with respect to $\alpha$ is strongly connected and without strong cutpoints.
\end{prop}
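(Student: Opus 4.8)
The plan is to argue by contradiction: I will show that any failure of the conclusion produces a topological configuration ruled out by one of the two hypotheses. A component of the Whitehead graph that is not strongly connected will yield an essential disc disjoint from $\lambda$, contradicting that $\lambda$ is disc-busting; a strong cutpoint will yield, according to a combinatorial dichotomy, either such a disc or a wave disjoint from $\lambda$, the latter contradicting that $\alpha$ is in tight position. Fix a component $\Sigma\times I$ of $\overline N_\sigma-\mathcal{N}(D)$ and let $F$ be its exterior-boundary face, a copy of $\Sigma$ cut open along the scars $\partial D_i$. Since $\lambda$ is disc-busting, $i(\lambda,\alpha_i)>0$ for all $i$, so every vertex of $Wh(M,\alpha,\mu_\lambda)^{\Sigma}$ carries edges, and by Lemma~\ref{endpoints} each component $W$ is a finite graph whose edges are the arcs in which leaves of $\lambda$ cross $F$, labelled by elements of $\pi_1(\Sigma)$.

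Suppose first that some $W$ is not strongly connected, so every cycle of $W$ represents $1\in\pi_1(\Sigma)$. Choosing a spanning tree and developing along it assigns to each vertex $v$ a translate $g_vU_{i_v}$ of one of the discs $U_i\subset\partial\H^3$ and to each edge a leaf of $\mu_\lambda$ joining the appropriate translates; triviality of all cycles is exactly what makes this assignment consistent, and this is the step where $\pi_1(\Sigma)$ being a surface group, acting on $\partial\H^3$ with limit Jordan curve $C$, makes ``strongly connected'' rather than merely ``connected'' the relevant notion. Since $C$ and all of these discs and leaves lie on prescribed sides of the curves $\widetilde{\partial D_i}$, the developed picture is planar; thickening it to an embedded planar subsurface and reading off a frontier curve — after passing to outermost leaves so as to stay off $\lambda$ — produces a simple closed curve $c$ on $\partial_e M$ that is disjoint from $\lambda$ and bounds a disc in $M$. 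That $c$ is essential (so that this disc is compressing) uses that $W$ is edge-carrying and that $M$ is a compression body, via Lemmas~\ref{lem:freeprod} and~\ref{lem:hnn}. Then $i(\lambda,c)=0$, contradicting disc-busting.

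Now suppose every component is strongly connected but some $W$ has a strong cutpoint $v$, so $W=G_1\cup G_2$ with $G_1\cap G_2=\{v\}$ and $G_1$ not strongly connected. Running the same developing argument on $G_1$ — which now hangs entirely off the single disc $g_vU_{i_v}$ — yields an embedded configuration meeting the rest of the picture only along $\widetilde{\partial D_{i_v}}$, whose outer frontier descends to an arc $k$ on $\partial_e M$ with both endpoints on $\alpha_{i_v}$, with interior disjoint from $\alpha$, and disjoint from $\lambda$. If $k$ is not homotopic rel endpoints into $\alpha$ through $\partial_e M$, it is a wave disjoint from $\lambda$ and tight position fails; otherwise $k$ and a subarc of $\alpha_{i_v}$ bound a disc in $\partial_e M$ across which $G_1$ can be pushed, and then — exactly as in the previous case, using again that $G_1$ is not strongly connected — one extracts an essential disc in $M$ disjoint from $\lambda$, contradicting disc-busting. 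Since strong connectedness and the presence of a strong cutpoint do not depend on the choices made in defining $Wh(M,\alpha,\mu_\lambda)$ (Section 3.1.2 of \cite{Lee2}), it is harmless that the argument fixes them.

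The main obstacle is the extraction step common to both cases: passing from the combinatorial data — a non-strongly-connected piece developed into $\partial\H^3$ — to an honest embedded essential disc or wave whose boundary avoids the whole of $\lambda$, not merely the finitely many leaves used to build the configuration. This requires the careful choice of outermost leaves, a verification that the thickened developed picture embeds, and an argument that the curve or arc obtained is essential and, in the disc case, genuinely compressing in $M$; these are the points at which the hypotheses of tight position and of $M$ being a compression body are really used. The remaining matters — finiteness of $W$, the $\pi_1(\Sigma)$-bookkeeping, and the reduction to the two cases — are routine given Lemma~\ref{endpoints} and the definitions.
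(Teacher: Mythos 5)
The paper does not actually prove this proposition: it is imported from Otal \cite{Ot} (Proposition A.5) and Lee \cite{Lee2} (Proposition 19), with only the remark that Otal's proof, written for Masur domain laminations, goes through for disc-busting laminations. Your outline does follow the expected Otal-style strategy --- develop a non-strongly-connected component (or the deficient half at a strong cutpoint) into $\partial\H^3$, and extract from the developed picture an essential disc or a wave disjoint from $\lambda$, contradicting disc-busting or tight position respectively --- so the architecture is the right one.

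But as a proof it has a genuine gap, and you name it yourself: the ``extraction step,'' passing from the combinatorial failure of strong connectivity to an honest embedded compressing disc or wave whose boundary misses all of $\lambda$, is the entire mathematical content of the proposition, and it is nowhere carried out. Concretely: (i) you never verify that the thickened developed picture embeds in the face $F$, nor that a frontier component can be chosen disjoint from the whole lamination rather than just from the finitely many homotopy classes of arcs recorded by the edges of $W$ --- each edge of the Whitehead graph represents infinitely many parallel arcs of $\lambda\cap F$, and avoiding all of them requires the outermost/innermost argument you only allude to; (ii) in the non-strongly-connected case you assert the resulting curve is an essential meridian ``via Lemmas~\ref{lem:freeprod} and~\ref{lem:hnn},'' but those lemmas run in the opposite direction (from an algebraic splitting to a disc), and you have produced neither a splitting nor a direct argument that the curve compresses and is nontrivial on $\partial_e M$; (iii) in the cutpoint case, the branch where $k$ is homotopic into $\alpha$ through $\partial M$ is dismissed with ``push $G_1$ across,'' which is not an argument. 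Since every use of the hypotheses (disc-busting, tight position, $M$ a compression body) is deferred to this missing step, what you have is a correct plan rather than a proof; completing it would amount to reproducing Otal's Proposition A.5, which is exactly what the paper cites in lieu of a proof.
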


In \cite{Ot}, Otal states Proposition \ref{otal} only for Masur domain laminations, but in fact his proof applies in this more general case.

\section{Other homeomorphism types}
In general, for $\rho: \pi_1(M) \rightarrow \pslc,$ a discrete and faithful representation, the associated hyperbolic $3$-manifold $N_\rho$ will be homotopy equivalent but not necessarily homeomorphic to $M.$  Since Otal's results only apply in the case when $M$ is a compression body, in this section we associate a Whitehead graph to any hyperbolizable $3$-manifold $M'$ homotopy equivalent, but not necessarily homeomorphic to $M$.  We can do this by using the characteristic compression body introduced by Bonahon in \cite{bon}.  We also describe how to compare Whitehead graphs for $M$ and $M'$.  
Let $A(M)$ denote the set of compact, orientable, irreducible, atoroidal (unmarked) $3$-manifolds homotopy equivalent to $M$, up to homeomorphism.
Let $[M_0], \ldots, [M_n]$ be the elements in $A(M),$ where $M_0$ is homeomorphic to $M.$  For each $M_i$ fix a convex cocompact representation $\sigma_i$ such that $N_{\sigma_i}$ is homeomorphic to the interior of $M_i.$
Let $\tau_i: C_S(\Gamma) \rightarrow \H^3$ be an orbit map for $\sigma_i$ as in the definition of separable-stable representations where $\Gamma=\pi_1(M)$.  Then, $\tau_i$ is a quasi-isometric embedding and there is a continuous extension, $\partial \tau_i: \partial C_S(\Gamma) \rightarrow \Lambda(\sigma_i)$ which is a homeomorphism. We suppressed the basepoint in $\tau_i$ since $\partial \tau_i$ is independent of the choice of basepoint.  Then, let
$$
T_i=\partial \tau_0 \circ \partial \tau_i^{-1}: \Lambda(\sigma_i) \rightarrow \Lambda(\sigma_0)
$$
be the homeomorphism from $\Lambda(\sigma_i)$ to $\Lambda(\sigma_0).$

In order to compare the Whitehead graph of $M_0$ and $M_i$ we need a way to compare systems of meridians on the two manifolds.  Define a map $F_i: \mathcal{M}_i \rightarrow \mathcal{M}_0$ from the set $\mathcal{M}_i$ of unweighted meridians on $M_i$ to the set $\mathcal{M}_0$ of unweighted meridians on $M_0$ in the following way (see Figure \ref{pic}).

\begin{figure}
\begin{center}
\includegraphics[width=130mm]{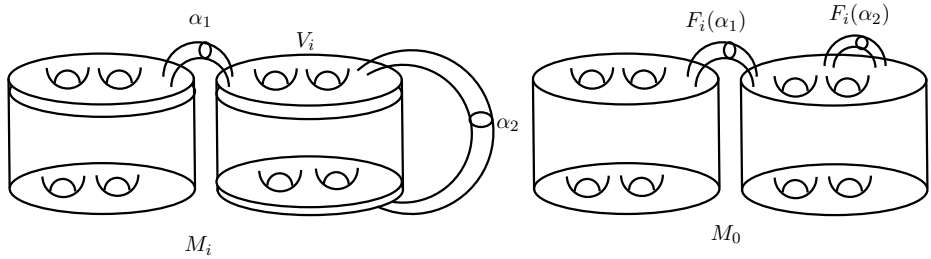}
\caption[Maps on meridians]{The system of meridians $\{\alpha_1, \alpha_2\}$ on the characteristic compression body $V_i$ of $M_i$ is mapped to the system of meridians $\{F_i(\alpha_1),F_i(\alpha_2)\}$ on $M_0.$}
\label{pic}
\end{center}
\end{figure}

Let $D_\alpha$ denote the essential disc bounded by $\alpha.$  First suppose that $\alpha$ is a meridian in $M_i$ such that $D_\alpha$ separates $M_i.$  Then, $D_\alpha$ realizes a splitting $\pi_1(M) = A*B,$
up to conjugation.
In $M_0$ we
can realize the splitting by an essential disc $D'$.  Let $F_i(\alpha)$ be the boundary of $D'.$  This is well-defined up to isotopy.  Indeed if we consider $F_i(\alpha)$ on the conformal boundary of $N_{\sigma_0},$
then some lift $\widetilde{ F_i(\alpha)}$ separates $\partial \mathbb{H}^3$
 into two components one of which contains all the attracting fixed points of any element whose reduced word representation starts with an element in $A$ and the other contains all the attracting fixed points of any element whose reduced word representation starts with an element in $B.$
  Since such points are dense in $\Lambda(\sigma_0),$ this determines $\widetilde{F_i(\alpha)},$ up to isotopy on $ \partial \widetilde M \subset \partial \H^3.$

Now suppose that $D_\alpha$ is non-separating.
Then, $D_\alpha$ realizes an HNN extension, $\pi_1(M) \cong A*_{\{1\}}.$  By Lemma \ref{lem:hnn}, we can realize such a splitting in $M_0$ by an essential disc $D'$.  As before, let $F_i(\alpha)$ be the boundary of $D'.$  Again, to see that $F_i(\alpha)$ is well-defined notice that some lift $\widetilde{F_i(\alpha)}$ separates $\partial \H^3$ into two components one of which contains all the attracting
fixed points of any element whose reduced word representation starts with an element of $A$ or $t$ and the other contains all the attracting fixed points of any element whose reduced word representation starts with $t^{-1},$ where $t$ is the new generator associated to the HNN-extension.

Any manifold $M$ with compressible boundary has a \emph{characteristic compression body} $V$, unique up to isotopy that has the following properties (see Bonahon \cite{bon} or McCullough-Miller \cite{mm}).  The exterior boundary of $V$ coincides with $\partial M$.  The closure of $M-V$ contains no essential compression discs and no component of $\overline{M-V}$ is a 3-ball.

Let $V_i$ denote the characteristic compression body of $M_i,$ with any trivial compression body components removed.

\begin{lem}\label{lem:mermap}

Let $\alpha_1, \ldots \alpha_n$ be a system of meridians on $V_i.$
Then, the collection of meridians $F_i(\alpha_1), \ldots, F_i(\alpha_n)$ is a system of meridians on $M_0.$
\end{lem}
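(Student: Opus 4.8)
The plan is to argue that $F_i$ faithfully transports the \emph{combinatorial type} of a system of meridians, so that the defining property of a system of meridians---that cutting along the discs yields a disjoint union of nontrivial trivial $I$-bundles---is preserved. First I would recall that a system of meridians $\alpha_1,\dots,\alpha_n$ on $V_i$ corresponds to a maximal collection of pairwise non-isotopic, pairwise disjoint splittings of $\pi_1(V_i)=\pi_1(M_0)$ into free products and HNN-extensions over the trivial group: realizing all the $\alpha_j$ simultaneously by disjoint discs $D_{\alpha_1},\dots,D_{\alpha_n}$ cuts $V_i$ into pieces $\Sigma_1\times I,\dots,\Sigma_k\times I$ with each $\Sigma_l$ closed of genus $\ge 2$, and this exhibits a Grushko decomposition $\pi_1(M_0)\cong \pi_1(\Sigma_1)\ast\cdots\ast\pi_1(\Sigma_k)\ast F_r$ together with the free factor system $\{\pi_1(\Sigma_l)\}$. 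By Lemma~\ref{lem:freeprod} and Lemma~\ref{lem:hnn}, each $F_i(\alpha_j)$ realizes exactly the same splitting in $M_0$ (up to conjugation), and the preceding discussion shows that each $F_i(\alpha_j)$ is well-defined up to isotopy once we pin down a lift separating $\partial\H^3$ in the prescribed way.

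The key step is then to show that the $F_i(\alpha_j)$ can be made pairwise disjoint and pairwise non-isotopic, and that cutting $M_0$ along the discs they bound again yields only trivial $I$-bundles over closed surfaces of genus $\ge 2$. For disjointness I would work on the level of $\partial\H^3$: for each $j$ choose the lift $\widetilde{F_i(\alpha_j)}$ so that its complementary disc $U_j$ (the component not containing the chosen Jordan curve for the relevant surface piece) is nested consistently with the nesting pattern of the $U$-sets coming from the $\alpha_j$ on $V_i$, which is possible precisely because the splittings realized by the $\alpha_j$ are pairwise compatible (disjoint discs). Since each $\widetilde{F_i(\alpha_j)}$ is determined by the partition of the dense set of attracting fixed points it induces, and these partitions refine one another in the same way on both manifolds, the curves $F_i(\alpha_j)$ may be realized disjointly; they are non-isotopic because they induce distinct partitions of $\Lambda(\sigma_0)$. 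Finally, cutting $M_0$ along $D'_1,\dots,D'_n$ produces pieces with the same fundamental groups $\pi_1(\Sigma_1),\dots,\pi_1(\Sigma_k)$ (and possibly some $3$-balls, which get absorbed); since $M_0$ is a compression body and each such piece is a compact, orientable, irreducible $3$-manifold with freely indecomposable surface-group fundamental group and incompressible boundary, each piece is a trivial $I$-bundle $\Sigma_l\times I$. Here I would invoke irreducibility of $M_0$ to discard sphere boundary components and the characterization of compression-body pieces after maximal disc surgery.

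The main obstacle I anticipate is the bookkeeping needed to guarantee \emph{simultaneous} disjointness of the $F_i(\alpha_j)$: the map $F_i$ is defined one meridian at a time via an abstract splitting, and a priori nothing forces the individually-chosen essential discs $D'_j\subset M_0$ to be isotopic into a mutually disjoint position. Handling this requires observing that the family of splittings realized by $\{\alpha_1,\dots,\alpha_n\}$ is a \emph{compatible} (simultaneously realizable) family of free-product/HNN decompositions---equivalently, it corresponds to a single vertex of the free splitting complex / a single simplicial-tree action with trivial edge stabilizers---and that such a compatible family lifts to a compatible family in $M_0$ because compatibility is encoded entirely by the nesting/linking pattern of the associated laminations of $\partial\H^3$, which $T_i$ transports homeomorphically. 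Once compatibility is in hand, the disc-surgery argument identifying the complementary pieces as trivial $I$-bundles is routine, using irreducibility and Grushko/Kurosh as recalled in Section~\ref{sec:compbody}.
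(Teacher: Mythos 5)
Your first half --- realizing the $F_i(\alpha_j)$ disjointly and non-isotopically by matching, via $T_i$, the partitions of $\Lambda(\sigma_i)$ and $\Lambda(\sigma_0)$ induced by lifts of the curves --- is essentially the paper's argument and is fine. The gap is in the second half, which you dismiss as routine: identifying the components of $M_0 - \bigcup \mathcal{N}(D_{F_i(\alpha_j)})$. First, ``possibly some $3$-balls, which get absorbed'' is not an option: by definition a system of meridians must have \emph{no} ball components in its complement, so ball components must be shown not to occur (the paper does this by observing that a ball component of the complement in $M_0$ would give lifts of the $F_i(\alpha_j)$ bounding a region of $\partial\H^3$ free of limit points, which $T_i^{-1}$ would convert into a ball component of $M_i - \bigcup \mathcal{N}(D_{\alpha_j})$, a contradiction). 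Second, you assert that each complementary piece has incompressible boundary and a freely indecomposable surface-group fundamental group; but that is precisely the content of the lemma, not something you may assume. A priori a piece $C_0$ could contain a curve $\gamma$ lying in no surface-group factor; the paper rules this out by noting that such a $\gamma$ would miss every lift of every $F_i(\alpha_j)$ on the level of $\Lambda(\sigma_0)$, hence (via $T_i$) would miss every $\alpha_j$, contradicting that $M_i - \bigcup \mathcal{N}(D_{\alpha_j})$ consists of trivial $I$-bundles. Nothing in your sketch replaces these two steps.

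There is a second, independent omission: you identify $\pi_1(V_i)$ with $\pi_1(M_0)$ and claim that cutting $V_i$ along $\alpha$ exhibits the Grushko decomposition of $\pi_1(M_0)$. But $\alpha$ is a system of meridians only on the characteristic compression body $V_i$, and $M_i = V_i \cup W_i$ where $W_i = \overline{M_i - V_i}$ has incompressible boundary. Before one can transport anything through the limit set (or through group theory), one must show that the components of $W_i$ are themselves trivial $I$-bundles over closed surfaces, so that $M_i - \bigcup \mathcal{N}(D_{\alpha_j})$ --- not merely $V_i - \bigcup \mathcal{N}(D_{\alpha_j})$ --- is a union of trivial $I$-bundles. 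The paper proves this by showing each component $C$ of $W_i$ has $\pi_1(C)$ a finite-index subgroup of a closed surface factor containing the boundary surface group with finite index, invoking Hempel's Theorem 10.5 to conclude $C$ is an $I$-bundle, and using orientability to exclude the twisted case. Your proposal never confronts $W_i$ at all, so the claimed equality of fundamental groups on which your final step rests is unestablished.
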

\begin{proof}
First we will show that the meridians $F_i(\alpha_1), \ldots, F_i(\alpha_n)$ can be realized such that they are pairwise disjoint and pairwise nonisotopic.
To see that we can realize $F_i(\alpha_j)$ and $F_i(\alpha_k)$ disjointly, observe that since $\alpha_j$ and $\alpha_k$ are disjoint, any two lifts $\tilde \alpha_j$ and $\tilde \alpha_k$, have the property that there exist $U_j$ a component of $\partial \H^3 - \tilde \alpha_j$ and $U_k$ a component of $\partial \H^3 - \tilde \alpha_k$ such that $U_j \cap \Lambda(\sigma_i)$ and $U_k \cap \Lambda(\sigma_i)$ are each non-empty and disjoint from each other.  There exists lifts $\widetilde{F_i(\alpha_j)}$ and $\widetilde{F_i(\alpha_k)}$ and components $U_j'$ and $U_k'$ of $\partial \H^3 - \widetilde{F_i(\alpha_j)}$ and $\partial \H^3 - \widetilde{F_i(\alpha_k)},$ respectively, such that
$$T_i (U_j \cap \Lambda(\sigma_i))=U_j' \cap \Lambda(\sigma_0)$$ and
$$T_i(U_k \cap \Lambda(\sigma_i))=U_k' \cap \Lambda(\sigma_0).$$
Then, we can homotope, $F_i(\alpha_j)$ and $F_i(\alpha_k)$ to be disjoint.

To see that $F_i(\alpha_j)$ and $F_i(\alpha_k)$ are not isotopic, notice that if they were isotopic, then there exists lifts $\widetilde{F_i(\alpha_j)}$ and $\widetilde{F_i(\alpha_k)}$ that bound a region in $\partial \H^3$ containing no limit points.  Then, the same would be true about the corresponding lifts $\tilde \alpha_j$ and $\tilde \alpha_k,$ which contradicts that they are not isotopic.

Let $D_{F_i(\alpha_j)}$ be the disc bounded by $F_i(\alpha_j)$ in $M_0.$    Finally, we want to see that $$M_0 -( \bigcup_j \mathcal{N}(D_{F_i(\alpha_j)}))$$ is a collection of trivial $I$-bundles over closed surfaces.  We will first show that $M_i - \cup \mathcal{N}(D_{\alpha_j})$ is a collection of trivial $I$-bundles over closed surfaces.  Recall that $M_i=V_i \cup W_i,$ where $V_i$ is the characteristic compression body and $W_i$ has incompressible boundary.   By definition, $V_i - \cup \mathcal{N}(D_{\alpha_j})$ is a collection of trivial $I$-bundles over closed surfaces.

We claim that the components of $W_i$ are all trivial $I$-bundles over closed surfaces.  Let $C$ be a component of $W_i,$ and let $T$ be a component of $\partial C.$  Since $C$ has incompressible boundary and cannot be a $3$-ball, we have that $\pi_1(C)$ lies in a surface group factor of some maximal free decomposition of $\pi_1(M)$ i.e. $\pi_1(C)$ is a subgroup of some $\pi_1(S)$.
The image of $\pi_1(T)$ must be finite index in $\pi_1(S)$ since they are both closed surface groups.  This implies that $\pi_1(T)$ is finite index in $\pi_1(C),$ which in turn implies that $C$ is an $I$-bundle (Theorem 10.5 in Hempel \cite{hem}). So $C$ is either a trivial $I$-bundle or twisted $I$-bundle.  It remains to check that $C$ cannot be a twisted $I$-bundle.
 If $C$ were a twisted $I$-bundle, then the fundamental group of $C$ would be the fundamental group of a non-orientable surface, but this is impossible as $\pi_1(C)$ is finite index in $\pi_1(S).$
So, we have shown that the components of $W_i$ are all trivial $I$-bundles over surfaces.  Now $M_i-\cup \mathcal{N}(D_{\alpha_j})$ is formed by gluing $W_i$ and $V_i - \cup \mathcal{N}(D_{\alpha_j})$ along their boundaries, so the result will be a collection of trivial $I$-bundles over closed  surfaces.

Now, consider $C_0$ a component of $M_0- \cup D_{F_i(\alpha_j)}$.  We want to see that $C_0$ is a trivial $I$-bundle over a closed surface.  If not, then either $C_0$ is a $3$-ball, or $C_0$ has compressible boundary.  In the former case, let $F_i(\alpha_1), \ldots, F_i(\alpha_n)$ be the components of $F_i(\alpha)$ that lie on the boundary of $C_0$ (notice that we can have two copies of $\alpha_j$ on the boundary of $C_0$, relabel accordingly).  Fix a lift $\widetilde C_0$ of $C_0.$  Then, the corresponding lifts $\widetilde{F_i(\alpha_1)}, \ldots, \widetilde{F_i(\alpha_n)}$ bound a connected region in $\partial \H^3$ containing no limit points.
 The corresponding lifts of $\tilde \alpha_1, \ldots, \tilde \alpha_n$ of $\alpha$ would have the same property.  This implies that there is a $3$-ball component of $M_i - \cup D_{\alpha_j},$  a contradiction.

If $C_0$ has compressible boundary, then there exists a closed curve $\gamma$ in $C_0$ such that $\gamma$ does not lie in any surface group factor of a free decomposition of $\pi_1(M).$  Moreover, any lift $\tilde \gamma$ of $\gamma$ has the property that both endpoints lie in the same component of $\partial \H^3 - \widetilde{F_i(\alpha_j)}$ for any lift of any $\alpha_j,$ since $\gamma$ misses any $F_i(\alpha_j).$  Then, we would also have that $\tilde \gamma$ has both endpoints in the same component of $\partial \H^3 - \tilde \alpha_j.$  So, $\gamma$ misses all the meridians $\alpha_j,$ a contradiction, since $M_i - \cup D_{\alpha_j}$ consists of trivial $I$-bundles over closed surfaces.
\end{proof}

Using Lemma \ref{lem:mermap} we can compare the Whitehead graphs of $V_i$ and $M_0.$  Recall from Section \ref{white} the definition of the Whitehead graph of a compression body.  For $\mu \subset \Lambda(\sigma_i) \times \Lambda(\sigma_i),$ we will abuse notation, and let $T_i(\mu)$ denote the image of $\mu$ under $T_i \times T_i:  \Lambda(\sigma_i) \times \Lambda(\sigma_i) \rightarrow  \Lambda(\sigma_0) \times \Lambda(\sigma_0).$  

\begin{lem} \label{Whotherhomeo}
Let $M_i$ lie in $A(M)$ and let $V_i$ be the characteristic compression body of $M_i.$  If $Wh(V_i, \alpha, \mu)$ is strongly connected and without strong cutpoint, then $Wh(M_0, F_i(\alpha), T_i (\mu))$ is strongly connected and without strong cutpoint.
\end{lem}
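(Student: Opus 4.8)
The plan is to show that strong connectedness and the absence of a strong cutpoint are detected by purely combinatorial data on $\partial\H^3$ — namely the cyclic order of the vertices $U_i$ on the circle, which pairs $(U_i, gU_j)$ meet $\mu$, and the group elements labeling the edges — and that all of this data is carried faithfully from $Wh(V_i,\alpha,\mu)$ to $Wh(M_0, F_i(\alpha), T_i(\mu))$ by the homeomorphism $T_i\colon\Lambda(\sigma_i)\to\Lambda(\sigma_0)$. Concretely, I would first fix a component $\Sigma\times I$ of $\overline N_{\sigma_i}-\mathcal N(D_\alpha)$, with its invariant Jordan curve $C\subset\Lambda(\sigma_i)$ and its lifts $\widetilde{\partial D_j}$ and open sets $U_j$. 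Since $T_i$ is $\Gamma$-equivariant on limit sets (it is $\partial\tau_0\circ\partial\tau_i^{-1}$, a boundary map between two quasi-isometric orbit maps for the \emph{same} group), it conjugates the $\pi_1(\Sigma)$-action on $\Lambda(\sigma_i)$ to the $\pi_1(\Sigma)$-action on $\Lambda(\sigma_0)$ (for the appropriate conjugate of $\pi_1(\Sigma)$), and it sends $C$ to a Jordan curve $C'\subset\Lambda(\sigma_0)$ invariant under the corresponding subgroup. The construction of $F_i$ in the paragraphs preceding Lemma \ref{lem:mermap} was set up precisely so that there are lifts $\widetilde{F_i(\partial D_j)}$ whose complementary arcs $U_j'$ satisfy $T_i\bigl(U_j\cap\Lambda(\sigma_i)\bigr)=U_j'\cap\Lambda(\sigma_0)$; I would record this (it is essentially the well-definedness discussion of $F_i$ together with the disjointness argument inside the proof of Lemma \ref{lem:mermap}) and note in particular that $T_i$ preserves the cyclic order of the sets $U_j\cap\Lambda(\sigma_i)$ around the circle, since an orientation-preserving homeomorphism of $\Lambda$ that is equivariant with respect to a Fuchsian-type action on a Jordan curve preserves the circular order inherited from $C$.

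Next I would use this to set up an explicit bijection between the vertices of $Wh(V_i,\alpha,\mu)^\Sigma$ and the vertices of $Wh(M_0, F_i(\alpha), T_i(\mu))^{\Sigma'}$ (sending the vertex labeled $D_j$ to the vertex labeled $F_i(D_j)$), and a bijection on edges: the edge $(U_j, gU_k)$ is present in the first graph iff $(U_j, gU_k)\cap\mu\neq\emptyset$, and applying $T_i\times T_i$ this holds iff $\bigl(T_i(U_j\cap\Lambda(\sigma_i)), gT_i(U_k\cap\Lambda(\sigma_i))\bigr)\cap T_i(\mu)\neq\emptyset$, i.e.\ iff $(U_j', gU_k')\cap T_i(\mu)\neq\emptyset$, which is exactly the condition for the edge labeled $g$ from $F_i(D_j)$ to $F_i(D_k)$ to be present in the second graph. (Here I use that $g\in\pi_1(\Sigma)$ acts compatibly on both sides via $T_i$.) Thus the two Whitehead graphs are isomorphic as labeled directed graphs, with the labeling by elements of $\pi_1(\Sigma)\cong\pi_1(\Sigma')$ preserved. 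Since a cycle $(U_{i_1},g_1U_{i_2}),\dots,(U_{i_k},g_kU_{i_1})$ represents $g_1\cdots g_k\in\pi_1(\Sigma)$ by the stated convention, a cycle represents a nontrivial element before the isomorphism iff its image represents a nontrivial element after — so strong connectedness of a component is preserved — and a strong cutpoint, being a vertex at which the graph decomposes as $G_1\cup G_2$ with $G_1$ or $G_2$ not strongly connected, is likewise preserved under a labeled-graph isomorphism. Finally I would invoke the remark at the end of Section \ref{white} that strong connectedness and the presence of a strong cutpoint are independent of the auxiliary choices (Jordan curve $C$, lifts $\widetilde{\partial D_j}$), so the conclusion does not depend on the particular lifts $\widetilde{F_i(\partial D_j)}$ produced above. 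Running over all components $\Sigma\times I$ gives the lemma.

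The main obstacle I anticipate is the bookkeeping needed to verify that $T_i$ really does intertwine the vertex data correctly for a \emph{non-separating} meridian, where the disc $D_j$ contributes two vertices $D_j^+, D_j^-$ and the HNN generator $t$ enters — one must check that the lift $\widetilde{F_i(\partial D_j)}$ selected by the well-definedness argument (via attracting fixed points of words starting with $t$ versus $t^{-1}$) is compatible with the lift $\widetilde{\partial D_j}$ under $T_i$, so that the two vertices on each side are matched consistently and the $t$-labels on edges transport correctly. A secondary point requiring care is that $T_i$ need not extend to a homeomorphism of all of $\partial\H^3$ (only of the limit sets), so every statement about the $U_j$ must be phrased in terms of $U_j\cap\Lambda(\sigma_i)$ and its image under $T_i$, never in terms of $U_j$ itself; the cyclic-order claim in particular should be proved directly from equivariance along $C$ rather than from any global homeomorphism of the circle. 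Once these two points are handled, the rest is the routine transport-of-structure argument sketched above.
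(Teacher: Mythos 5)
Your proposal is correct and follows essentially the same route as the paper: both arguments use the $\Gamma$-equivariant boundary homeomorphism $T_i$ to transport each lift $\tilde\alpha_j$ to the lift $\widetilde{F_i(\alpha_j)}$ that partitions $\Lambda(\sigma_0)$ in the same way, establish a label-preserving bijection of vertices and edges (the edge condition $(U_j\times gU_k)\cap\mu\neq\emptyset$ being equivalent to its image under $T_i\times T_i$), and conclude that strong connectedness and strong cutpoints are preserved under this labeled-graph isomorphism. The cyclic-order discussion is not needed, but your attention to phrasing everything in terms of $U_j\cap\Lambda(\sigma_i)$ rather than $U_j$ itself matches the paper's own caveat that $T_i$ is defined only on limit sets.
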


\begin{proof}
Recall that we have a $\Gamma$-equivariant homeomorphism $T_i: \Lambda(\sigma_i) \rightarrow \Lambda(\sigma_0).$  Although $T_i$ is only defined on the limit set of $\sigma_i,$ it induces a natural map on lifts of $\alpha_1, \ldots, \alpha_n$ to lifts of $F_i(\alpha_1), \ldots F_i(\alpha_n).$  For example, given a lift $\tilde \alpha_j$ of $\alpha_j$, there exists a corresponding lift $\widetilde{F_i(\alpha_j)}$ of $F(\alpha_j)$ that partitions $\Lambda(\sigma_0)$ in the same way as $\tilde \alpha_j$ via $T_i.$  We'll abuse notation to denote such a lift $\widetilde{F(\alpha_j)}$ as $T_i(\tilde \alpha_j).$
More precisely, suppose that $\partial \H^3 - \tilde \alpha_j = V_1 \sqcup V_2.$ Then, $T_i(\tilde \alpha_j)$ is the lift $\widetilde{F_i(\alpha_j)}$ of $F_i(\alpha_j)$ such that if $\partial \H^3 - \tilde F_i(\alpha_j) = W_1 \sqcup W_2,$ then $T_i(V_k \cap \Lambda(\sigma_i)) = W_k \cap \Lambda(\sigma_0)$ where $k=1,2,$ up to switching $W_1$ and $W_2.$

Suppose that $\Sigma \times I$ is a component of $V_i - \cup D_{\alpha_j}.$  Recall that when defining the Whitehead graph, we fixed $C$ a Jordan curve in $\partial \H^3$ invariant under $\pi_1(\Sigma)$.  We let $F$ denote the boundary component of $\Sigma \times I$ intersecting $\alpha$ and we fixed the lift $\tilde F$ of $F$ whose boundary is $C.$  If $\tilde \alpha_j$ corresponds to a vertex of $Wh(V_i, \alpha, \mu)^\Sigma,$ then map it to $T_i(\tilde \alpha_j).$

We want to show that this is a well-defined map on the vertices.  First notice that since $\{F_i(\alpha_j)\}$ is a system of meridians, $M_0-\cup D_{F_i(\alpha_j)}$ consists of trivial $I$-bundles over surfaces.  One of these components $\Sigma' \times I$ must have fundamental group conjugate to $\pi_1(\Sigma),$ by the uniqueness of the free decomposition of $\pi_1(M)$ (see Section \ref{sec:compbody}).  Let $C'$ be the Jordan curve invariant under $\pi_1(\Sigma)$.  Note that $T_i(C)=C'.$  Let $F'$ be the component of $\Sigma \times I$ intersecting $F_i(\alpha)$ and let $\tilde F'$ denote the component of the preimage of $F'$ with $C'$ on its boundary.  Then, we claim that $T_i(\tilde \alpha_j)$ lies on $\tilde F'.$  Indeed, this is clear from the fact that $T_i(\tilde \alpha_j)$ partitions $\Lambda(\sigma_0)$ in the same way as $\tilde \alpha_j$ via $T_i,$ as described above.   So we see that the map on the vertices is well-defined.

Now suppose that $(U_j, gU_k)$ is an edge in $Wh(V_i, \alpha, \mu).$  This implies that the intersection of $\mu$ and $(U_j \times gU_k)$ is nonempty.  By construction, this is true if and only if the same is true for $T_i(\mu)$ and $T_i(U_j) \times gT_i(U_k).$  So, we have a bijection between the Whitehead graph of $Wh(V_i, \alpha, \mu)$ and that of $Wh(M_0, F_i(\alpha), T_i(\mu))$ where an edge in $Wh(V_i, \alpha, \mu)$ labeled $g$ is mapped to an edge in $Wh(M_0, F_i(\alpha), T_i(\mu))$ with the same label.  In particular, we see that $Wh(V_i, \alpha, \mu)$ is strongly connected and without cutpoint if and only if the same is true for $Wh(M_0, F_i(\alpha), T_i(\mu)).$
\end{proof}

\section{Cannon-Thurston maps}
Recently, Mj (\cite{Mj8}) has proved the existence of Cannon-Thurston maps for general Kleinian groups.  In particular, given a Kleinian group $\Gamma$ and a discrete and faithful representation $\rho: \Gamma \ra \pslc$, he showed that there exists a continuous extension $\bar \tau_{\rho, x}: C_S(\Gamma) \cup \partial C_S(\Gamma) \rightarrow \H^3 \cup \partial \H^3$ of $\tau_{\rho, x}.$  Moreover, he gives a characterization, which we describe below, of which points are mapped non-injectively by $\bar \tau_{\rho, x}$.

Before stating Mj's result we recall some facts from hyperbolic geometry that we will need.  We will restrict to the case that $\Gamma$ has no non-cyclic abelian subgroups since this is the case in this paper.  

There exists a constant $\mu_3,$ called the Margulis constant, such that for any hyperbolic $3$-manfiold, and any $\epsilon > \mu_3,$ each component of the $\epsilon$-thin part of $N$ is a metric neighborhood of a closed geodesic or a parabolic cusp homeomorphic to $S^1 \times \mathbb{R} \times (0,\infty).$  Given $\epsilon < \mu_3$ the \emph{non-cuspidal part of $N_\rho$}, denoted  $(N_\rho)_0^\epsilon,$ is the submanifold of $N_\rho$ obtained by removing the noncompact components of the $\epsilon$-thin part of $N_\rho.$  

When $\rho(\Gamma)$ has parabolics,
$N_\rho=\H^3/{\rho(\Gamma)}$ has a \emph{relative compact core,} $C,$ that is a compact submanifold whose inclusion is a homotopy equivalence and that intersects each cusp neighborhood
along an annulus whose core curve we call a {\em parabolic curve} (McCullough \cite{mcc}).  Let $P$ denote the union of these annuli.  Moreover, by the Tameness (Agol \cite{ago} or Calegari-Gabai \cite{cal-gab}) and the uniqueness of cores (McCullough-Miller-Swarup \cite{mms}) $N_\rho$ is homeomorphic to the interior of $C$.

Each frontier component $S_i$ of $\partial C-P$ in $(N_\rho)_0^\epsilon$ faces an end
$E_i$ with a well-defined ending lamination $\lambda_i,$ obtained from taking the support of a projective limit of any sequence of simple closed geodesics exiting the end $E_i.$  The ending lamination $\lambda_i$ is a
filling minimal lamination on $S_i$ (Canary \cite{can}). 

Let $\lambda' \subset \partial C$ be the union of $\lambda_i$ and the
parabolic curves, and assume that each is disc-busting.  If we fix a convex cocompact structure $N_\sigma$ on $C$, then we can identify $\lambda'$ with a geodesic lamination on $\partial_C N_\sigma$. By Lemma \ref{endpoints} any lift $\tilde l$ of a leaf $l$ of $\lambda_i$ for any $i$ has two well defined endpoints at infinity on $\Lambda(\sigma(\Gamma))$.  Since $\sigma$ is convex cocompact, we can identify $\partial C_S(\Gamma)$ with $\Lambda(\sigma(\Gamma))$.  We
introduce a relation $\mathcal R$ for points in $\Lambda(\sigma(\Gamma))$ (and so also on $\partial C_S(\Gamma)$) such that $a \mathcal R b$ if and only if either $a$ and $b$
are the endpoints of a leaf of $\widetilde \lambda'$ or ideal vertices
of a complementary region of $\widetilde \lambda'$. In contrast to
the case of groups without parabolics, there are complementary
regions of $\widetilde \Lambda$ which are ideal polygons with
infinitely many sides. These are exactly regions touching lifts of
parabolic curves, which are isolated leaves. The relation $\mathcal
R$ may not be transitive. We define $\widetilde{\mathcal R}$ to be
the transitive closure of $\mathcal R$, which is an equivalence
relation.

Mj's theorem about the identified points of a Cannon-Thurston map
can be adapted to our case as follows.
\begin{thm}[Mj \cite{Mj8}]\label{Mj2}
Let $\rho:\Gamma\ra\pslc$ be a discrete and faithful representation.  
The identification of $C_S(\Gamma)$ with its image
under $\tau_{\rho,x}$ extends continuously to a map $\bar \tau_{\rho,x} :
C_S(\Gamma) \cup \partial C_S(\Gamma) \rightarrow \H^3 \cup
\partial \H^3$. Moreover, for $a,b\in\partial C_S(\Gamma)$, we have $\bar
\tau_{\rho, x}(a)=\bar \tau_{\rho, x}(b)$ if and only if $a\widetilde{\mathcal R} b$.
\end{thm}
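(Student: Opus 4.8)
The statement is Mj's general Cannon--Thurston theorem translated into the present setting, so the plan is to put the identifications in place and then quote \cite{Mj8}. First I would record that $\Gamma=\pi_1(M)$ is word-hyperbolic: it is a free product of closed surface groups and a free group, hence hyperbolic, so $\partial C_S(\Gamma)$ is its Gromov boundary and is canonically independent of $S$. Fixing the convex cocompact uniformization $\sigma$ of the relative compact core $C$, the orbit map is a quasi-isometric embedding whose boundary extension $\partial C_S(\Gamma)\to\Lambda(\sigma(\Gamma))$ is an equivariant homeomorphism (as already noted in the excerpt), and I use it throughout to pass between $\partial C_S(\Gamma)$ and $\Lambda(\sigma(\Gamma))$. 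The existence of the continuous extension $\bar\tau_{\rho,x}$ of $\tau_{\rho,x}$, and the fact that its restriction to $\partial C_S(\Gamma)$ surjects onto $\Lambda(\rho(\Gamma))$, is then precisely Mj's Cannon--Thurston theorem for finitely generated Kleinian groups, applied to $\rho(\Gamma)$.

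The substance of the proof is matching Mj's description of the non-injective locus with $\widetilde{\mathcal R}$. Mj shows that two boundary points of $\Gamma$ have the same Cannon--Thurston image exactly when they are joined by a leaf of, or are distinct ideal endpoints of a complementary region of, the preimage in $\partial\Gamma$ of the Cannon--Thurston lamination of $\rho$ --- a lamination assembled from the ending laminations of the geometrically infinite ends together with the parabolic data, modulo the equivalence relation it generates. I would check, end by end, that these pieces are exactly the $\lambda_i$ and the parabolic curves making up $\lambda'$: each frontier surface $S_i$ of $\partial C-P$ facing a geometrically infinite end carries the ending lamination $\lambda_i$ appearing in Mj's list, and each cusp of $N_\rho$ contributes a parabolic curve of $P$, lifting to a family of isolated leaves of $\widetilde{\lambda'}$ whose complementary regions are the ideal polygons with infinitely many sides --- precisely those touching lifts of parabolic curves. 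Since each component of $\lambda'$ is disc-busting, Lemma \ref{endpoints} guarantees that every leaf of $\widetilde{\lambda'}$ has two well-defined endpoints in $\Lambda(\sigma(\Gamma))$, so $\mathcal R$, and hence its transitive closure $\widetilde{\mathcal R}$, is well-defined on $\partial C_S(\Gamma)$; unwinding the definitions then identifies the graph of $\widetilde{\mathcal R}$ with Mj's identification locus.

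The step I expect to be the main obstacle is the bookkeeping around parabolics. Mj's theorem is cleanest for hyperbolic groups, and in the cusped case is naturally phrased relative to an electrified (cusped) model, whereas here the statement is for the ordinary Cayley graph $C_S(\Gamma)$ and for a $\rho$ that may have parabolics. I would argue that because $\Gamma$ is already word-hyperbolic there is no nontrivial electrification to perform on the \emph{domain} side, so $\bar\tau_{\rho,x}$ is honestly defined on $C_S(\Gamma)\cup\partial C_S(\Gamma)$; the horospherical identifications recorded by Mj's cusped picture translate, on the un-electrified side, exactly into the infinitely-many-sided complementary polygons of $\widetilde{\lambda'}$, which is also why $\mathcal R$ fails to be transitive and one must pass to $\widetilde{\mathcal R}$. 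Making this comparison precise --- relating Mj's pared-locus and relative-ending-lamination data for $\rho$ to the explicit lamination $\lambda'\subset\partial C$ used here, together with the role of the relative compact core and of the choice of $\epsilon<\mu_3$ --- is the one genuinely technical point; everything else is formal once the identification $\partial C_S(\Gamma)\cong\Lambda(\sigma(\Gamma))$ and Lemma \ref{endpoints} are in hand.
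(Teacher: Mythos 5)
Your proposal matches the paper's treatment: the paper gives no proof of this statement, simply presenting it as Mj's Cannon--Thurston theorem ``adapted to our case,'' and your plan --- identify $\partial C_S(\Gamma)$ with $\Lambda(\sigma(\Gamma))$ via the convex cocompact uniformization, invoke \cite{Mj8} for the continuous extension, and match Mj's identification locus (ending laminations of the geometrically infinite ends plus parabolic data, with the infinitely-sided complementary polygons forcing passage to the transitive closure $\widetilde{\mathcal R}$) --- is exactly the intended reading. Your flagging of the parabolic/electrification bookkeeping as the one genuinely technical point is apt, since that is precisely the translation the paper asserts without detail.
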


\section{Sufficient condition}
In this section, we use the dichotomy between the Whitehead graph of a separable element and a disc-busting lamination to show the sufficient condition of Theorem \ref{thm:main}, namely that when $\pi_1(M)$ is not uniquely freely decomposable, a representation is separable-stable if each end invariant is disc-busting.  For the sufficient condition, we'll use Mj's result described in the previous section.  The following lemma, whose proof can be found in Lee \cite{Lee2} Lemma 21, sets the stage for using Mj's result.

\begin{lem}\label{notss} Let $\rho: \pi_1(M) \rightarrow \pslc$ be discrete and faithful.  If $\rho$ is not separable-stable, then there exists a sequence of separable elements $g_i$ such that if $g_i^+$ and $g_i^-$ are the endpoints of $g_i,$ then $\bar \tau_\rho(g_i^+)$ and $\bar \tau_\rho(g_i^-)$ converge, up to subsequence, to the same point in $\partial \H^3,$ but $g_i^+$ and $g_i^-$ converge to $z^+$ and $z^-$ in $\partial C_S(G)$ where $z^+ \neq z^-.$
\end{lem}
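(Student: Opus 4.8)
The plan is to negate separable-stability, extract for worse and worse quasi-geodesic constants a separable element whose axis maps badly to $\H^3$, renormalize by a conjugation, pass to a limit, and then use the Cannon-Thurston map to force the two limiting endpoints to be identified. Since separable-stability is independent of the basepoint, fix $x\in\H^3$ and write $\tau=\tau_{\rho,x}$. If $\rho$ is not separable-stable, then for each $n$ there is a geodesic $l_n\in\mathcal{S}_S$, the axis of some separable element $g_n$, with $\tau(l_n)$ not an $(n,n)$-quasi-geodesic. The map $\tau$ is Lipschitz along $l_n$ with a constant depending only on $\rho$ and $S$, so for all large $n$ the upper quasi-geodesic inequality holds and the lower one must fail: there are vertices $a_n,b_n$ on $l_n$ with $m_n:=d_{C_S}(a_n,b_n)$ and $d_\H(\tau a_n,\tau b_n)<\tfrac1n m_n-n$. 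In particular $m_n>n^2\to\infty$, while $d_\H(\tau a_n,\tau b_n)/m_n\to 0$.

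Next I renormalize. A conjugate $hgh^{-1}$ of a separable element $g$ is again separable (conjugate the free decomposition realizing separability), and its axis is the left translate by $h$ of that of $g$; applying $\tau$ then translates the image by the isometry $\rho(h)$, so the quasi-geodesic constants are unchanged. Thus I may replace each $g_n$ by a conjugate so that the midpoint of the segment $[a_n,b_n]\subseteq l_n$ lies within distance $1$ of the identity $e$; then $a_n$ and $b_n$ sit at $C_S$-distance $\tfrac{m_n}{2}+O(1)$ from $e$, on the $g_n^-$ and $g_n^+$ sides of $l_n$ respectively. Passing to a subsequence, $g_n^+\to z^+$ and $g_n^-\to z^-$ in the compact space $\partial C_S(\Gamma)$; since each $l_n$ meets the unit ball about $e$, the Gromov products $(g_n^+\mid g_n^-)_e$ stay bounded, so $z^+\neq z^-$. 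As $a_n$ lies on the geodesic ray from $e$ toward $g_n^-$ at distance tending to infinity, we also have $a_n\to z^-$, and likewise $b_n\to z^+$.

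It remains to show $\bar\tau_\rho(z^+)=\bar\tau_\rho(z^-)$: given this, continuity of $\bar\tau_\rho$ (Theorem \ref{Mj2}) yields $\bar\tau_\rho(g_n^\pm)\to\bar\tau_\rho(z^\pm)$, and the two limits coincide, which is precisely the conclusion. I expect this to be the main obstacle, and I would attack it by contradiction. If $\bar\tau_\rho(z^+)\neq\bar\tau_\rho(z^-)$, then by Mj's theorem $z^+$ and $z^-$ are not $\widetilde{\mathcal{R}}$-related, so the bi-infinite geodesic $[z^-,z^+]$ is not a leaf of $\widetilde\lambda'$, does not join ideal vertices of a complementary region of $\widetilde\lambda'$, and is not obtained from such configurations by the transitive closure. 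Combining Mj's description of the (non)injectivity of $\bar\tau_\rho$ on $\partial C_S(\Gamma)$ (which is concentrated on lifts of leaves of $\lambda'$ and on parabolics) with the observation that, by $\delta$-thinness of triangles in $\H^3$, a long subsegment of $l_n$ whose $\tau$-image fails this badly to be quasi-geodesic must lie in a ball about $x$ of radius sublinear in the $C_S$-length of the subsegment, one should be able to conclude that $\tau$ is a $(K,A)$-quasi-geodesic, with uniform $(K,A)$, on $[z^-,z^+]$, and therefore on $l_n$ for every large $n$; but $a_n$ and $b_n$ violate the lower quasi-geodesic inequality once $n>K$, a contradiction. (The remaining case, in which the obstruction to separable-stability is a disc-dodging parabolic rather than an ending lamination, is immediate: such a parabolic curve is itself separable, and its two distinct fixed points in $\partial C_S(\Gamma)$ are sent by $\bar\tau_\rho$ to the single parabolic fixed point, so the constant sequence already works.)
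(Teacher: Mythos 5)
Your first two paragraphs are sound: negating the definition, using the Lipschitz upper bound to locate bad pairs $(a_n,b_n)$ with $m_n=d(a_n,b_n)>n^2$ and $d_{\H^3}(\tau a_n,\tau b_n)<m_n/n-n$, conjugating so the axes pass through the unit ball about $e$, and using boundedness of the Gromov products $(g_n^+\mid g_n^-)_e$ to get $z^+\neq z^-$ is exactly the right normalization. The gap is in the third paragraph, which is the entire content of the lemma, and it is not a fixable imprecision but a step that fails. First, the assertion that a segment violating the lower quasi-geodesic inequality ``must lie in a ball about $x$ of radius sublinear in the $C_S$-length'' is false: the inequality only constrains the distance between the images of the two \emph{endpoints}; the image path can make an excursion of length comparable to $m_n$ and return. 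Second, the implication ``$\bar\tau_\rho(z^+)\neq\bar\tau_\rho(z^-)$ implies $\tau$ is a uniform quasi-geodesic on $[z^-,z^+]$'' is also false: a geodesic with non-identified endpoints can fellow-travel a lift of a leaf of the ending lamination for arbitrarily long stretches, along which $\tau$ makes sublinear progress. Third, even granting that conclusion, it would not transfer to $l_n$ for large $n$, since $l_n\to[z^-,z^+]$ only uniformly on compact sets while $a_n,b_n$ escape every compact set. Most seriously, the statement you are trying to prove in that paragraph can genuinely fail for the sequence you constructed: one can have $\bar\tau_\rho(z^\pm)$ distinct while $d_{\H^3}(\tau a_n,\tau b_n)\to\infty$ so slowly (e.g.\ because $[e,z^\pm]$ spend a density-one proportion of their time near leaf-lifts) that the $(n,n)$ lower bound is still violated. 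So no contradiction argument can rescue this particular choice of $(g_n,a_n,b_n)$; a more careful selection is needed.

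For comparison, the paper does not reproduce a proof but cites Lee \cite{Lee2}, Lemma 21, and the route there is through the geometric characterization recorded here as Lemma \ref{compact}: if $\rho$ is not separable-stable then either some separable element is sent to a parabolic --- in which case the constant sequence works, as you observe in your final parenthesis --- or the separable closed geodesics are not contained in any compact subset of $N_\rho$. In the latter case one chooses separable $g_i$ and conjugates so that the Cayley-graph axes pass through $e$ (giving $z^+\neq z^-$ exactly as in your second paragraph) while the corresponding axes of $\rho(g_i)$ in $\H^3$ recede from the orbit of the basepoint; since $\bar\tau_\rho(g_i^{\pm})$ are the endpoints of the axis of $\rho(g_i)$ and a geodesic far from $x$ has endpoints with large Gromov product at $x$, the pairs $\bar\tau_\rho(g_i^+),\bar\tau_\rho(g_i^-)$ subconverge to a single point of $\partial\H^3$. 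Note that this argument does not invoke Mj's theorem at all; Theorem \ref{Mj2} is only needed afterwards, in Proposition \ref{sufficient}, to identify \emph{which} points of $\partial C_S(\Gamma)$ the limits $z^{\pm}$ can be.
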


\begin{prop}\label{sufficient}
Let $M$ be a nontrivial compression body, without toroidal boundary components, that is not the boundary connect sum of two trivial $I$-bundles over closed surfaces.  Let $\rho: \pi_1(M) \rightarrow \pslc$ be a discrete and faithful representation.  If each parabolic locus and ending lamination of $N_\rho$ is disc-busting, then $\rho$ is separable-stable.
\end{prop}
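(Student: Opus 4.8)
\section*{Proof plan for Proposition \ref{sufficient}}

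\textbf{Strategy and reductions.} The plan is to argue by contradiction: Lemma \ref{notss} produces separable elements whose endpoints are identified by the Cannon--Thurston map, Mj's Theorem \ref{Mj2} turns this into a combinatorial statement about $\widetilde{\lambda'}$, and Otal's two propositions \ref{separableelement} and \ref{otal} are then played against each other. First set up the manifold: $N_\rho$ has a relative compact core $C$, and by Tameness $C$ is homeomorphic to some $M_i\in A(M)$ with $N_\rho$ homeomorphic to $\Int M_i$; let $V_i$ be the characteristic compression body of $M_i$ (trivial components discarded). Then $\pi_1(V_i)=\pi_1(M_i)=\Gamma$, and $V_i$ is again a nontrivial compression body that is not a boundary connect sum of two trivial $I$-bundles over closed surfaces (otherwise $\Gamma$ would split as a free product of two closed surface groups, forcing the same on $M$). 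Let $\lambda'$ be the union of the ending laminations $\lambda_j$ and the parabolic curves, viewed in $\partial M_i$. Since each component of $\lambda'$ is disc-busting by hypothesis, a suitably weighted sum of these components is a disc-busting measured lamination with support $\lambda'$, so $\lambda'$ is disc-busting; by Proposition \ref{discbusting} there is a system of meridians $\alpha$ on $V_i$ in tight position with $\lambda'$, and by Proposition \ref{otal} the Whitehead graph $Wh(V_i,\alpha,\mu_{\lambda'})$ --- a finite graph, being determined by the finitely many ways leaves of the tight lamination $\lambda'$ run between consecutive meridians of $\alpha$ --- is strongly connected and without strong cutpoint.

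\textbf{The contradiction hypothesis and Mj's theorem.} Suppose $\rho$ is not separable-stable. Lemma \ref{notss} gives separable $g_i\in\Gamma$ with $g_i^\pm\to z^\pm$ in $\partial C_S(\Gamma)$, $z^+\ne z^-$, and $\bar\tau_\rho(g_i^+),\bar\tau_\rho(g_i^-)$ converging to a common point of $\partial\H^3$. Identifying $\partial C_S(\Gamma)$ with $\Lambda(\sigma_i)$, continuity of $\bar\tau_\rho$ (Theorem \ref{Mj2}) forces $\bar\tau_\rho(z^+)=\bar\tau_\rho(z^-)$, hence $z^+\widetilde{\mathcal R}z^-$. (If $\lambda'=\emptyset$, i.e.\ $N_\rho$ is convex cocompact, this already says $z^+=z^-$, a contradiction.) In general, unwinding the definition of $\widetilde{\mathcal R}$, there is a finite chain $z^+=x_0,x_1,\dots,x_N=z^-$ with each consecutive pair $\mathcal R$-related; traversing the finitely many boundary leaves of the complementary regions appearing in the chain (taking, for the infinite-sided regions adjacent to lifts of parabolic curves, the finite way around), this refines to a finite chain of leaves of $\widetilde{\lambda'}$ joining $z^+$ to $z^-$.

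\textbf{The core step.} The heart of the matter is: \emph{for $i$ large, $Wh(V_i,\alpha,\mu_{g_i})$ is strongly connected and without strong cutpoint.} The mechanism is that the $\widetilde{\mathcal R}$-chain, together with the $\Gamma$-action and the density of the leaves of each minimal piece $\lambda_j$, forces the closure of $\Gamma\cdot\{(z^-,z^+),(z^+,z^-)\}$ in $\Lambda(\sigma_i)\times\Lambda(\sigma_i)$ to meet the open set $U_a\times gU_b$ attached to every edge $(U_a,gU_b)$ of $Wh(V_i,\alpha,\mu_{\lambda'})$. Since these sets are open, each such edge is realized by a translate $\sigma_i(h)\cdot(g_i^-,g_i^+)\in\mu_{g_i}$ once $i$ is large (as $(g_i^-,g_i^+)\to(z^-,z^+)$ and $\sigma_i(h)$ is a homeomorphism), and as $Wh(V_i,\alpha,\mu_{\lambda'})$ has finitely many edges a single large $i$ handles all of them, so $Wh(V_i,\alpha,\mu_{g_i})\supseteq Wh(V_i,\alpha,\mu_{\lambda'})$ with the same vertex set. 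Adding edges to a spanning strongly connected graph with no strong cutpoint preserves both properties, so $Wh(V_i,\alpha,\mu_{g_i})$ is strongly connected without strong cutpoint. But $g_i$ is separable, so by Proposition \ref{separableelement} some connected component of $Wh(V_i,\alpha,\mu_{g_i})$ is not strongly connected or has a strong cutpoint --- a contradiction, so $\rho$ is separable-stable. (To work with $M_0$ rather than $V_i$, replace $(\alpha,\mu)$ by $(F_i(\alpha),T_i(\mu))$ and invoke Lemmas \ref{lem:mermap} and \ref{Whotherhomeo}.)

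\textbf{Main obstacle.} The delicate claim is that the orbit-closure of $(z^-,z^+)$ sees every edge of $Wh(V_i,\alpha,\mu_{\lambda'})$. Extracting this from the bare fact $z^+\widetilde{\mathcal R}z^-$ requires controlling how each leaf and complementary region in the refined chain meets $\alpha$ and propagating this under $\Gamma$ using density of the leaves of the minimal pieces $\lambda_j$; the infinite-sided complementary regions adjacent to parabolic curves --- precisely where long $\widetilde{\mathcal R}$-chains arise --- are the subtle case, and it is here that disc-bustingness of \emph{each} component of $\lambda'$ (rather than of $\lambda'$ as a whole) is used. The remaining ingredients --- the reduction to $V_i$, finiteness of $Wh(V_i,\alpha,\mu_{\lambda'})$, and the monotonicity of ``strongly connected without strong cutpoint'' under adding edges --- are routine.
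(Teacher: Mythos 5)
Your overall architecture is the paper's: argue by contradiction via Lemma \ref{notss}, apply Mj's Theorem \ref{Mj2} to conclude $z^+\,\widetilde{\mathcal R}\,z^-$, play Otal's Propositions \ref{separableelement} and \ref{otal} against each other through Whitehead graphs, and transfer between $V_i$ and $M_0$ with Lemmas \ref{lem:mermap} and \ref{Whotherhomeo}. But your ``core step'' contains a genuine gap, and it is exactly the point you flag as the main obstacle. You work with the full union $\lambda'$ of all ending laminations and parabolic curves, apply Proposition \ref{otal} to $\lambda'$, and then claim that the closure of $\Gamma\cdot\{(z^-,z^+),(z^+,z^-)\}$ meets $U_a\times gU_b$ for \emph{every} edge of $Wh(V_i,\alpha,\mu_{\lambda'})$. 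That claim is not justified by ``density of the leaves of each minimal piece,'' and it is false in general: minimality of a single piece $\lambda_j$ only makes the translates of one of its leaves accumulate on $\mu_{\lambda_j}$, so the orbit closure of the single pair $(z^+,z^-)$ (where $z^+$ is an endpoint of a leaf of one particular $\widetilde\lambda_j$, or of a lift of one particular parabolic curve) has no reason to contain $\mu_{\lambda_{j'}}$ for a different end invariant $\lambda_{j'}$ or $\mu_c$ for a parabolic curve $c$ not involved in the $\widetilde{\mathcal R}$-chain. Hence you cannot conclude $Wh(V_i,\alpha,\mu_{g_i})\supseteq Wh(V_i,\alpha,\mu_{\lambda'})$, and since only the union's graph is known (in your setup) to be strongly connected without strong cutpoint, the contradiction with Proposition \ref{separableelement} does not follow.

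The repair is the route the paper actually takes: use Theorem \ref{Mj2} to identify the \emph{single} end invariant $\lambda=\lambda_j$ (an ending lamination or a parabolic curve) having $z^+$ as an endpoint of a leaf; choose $\alpha$ in tight position with respect to that $\lambda$ alone (Proposition \ref{discbusting}); apply Proposition \ref{otal} to that $\lambda$ alone to get that $Wh(V_k,\alpha,\mu_\lambda)$ is strongly connected without strong cutpoint; and then one only needs the orbit closure of $(z^+,z^-)$ to contain $\mu_\lambda$ for this one minimal piece, which is the content of Proposition 22 of Lee \cite{Lee2} (cited, not reproved, in the paper). With that substitution your limiting/openness argument, the finiteness of the graph, and the transfer to $M_0$ all go through as you describe. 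Note also that passing from tight position of the union $\lambda'$ to tight position of a sublamination $\lambda_j$ is not automatic (a wave disjoint from $\lambda_j$ may still meet $\lambda'\setminus\lambda_j$), which is another reason to fix $\alpha$ for the single relevant $\lambda_j$ from the start rather than for $\lambda'$.
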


Notice that the statement of the above proposition does not make any assumption about the homeomorphism type of $N_\rho.$

\begin{proof}
Suppose that $N_\rho$ is homeomorphic to the interior of $M_k$ where $[M_k] \in A(M).$  Let $\lambda_1, \ldots, \lambda_n$ be the union of the parabolic loci and ending laminations on $\partial M_k.$
If $\rho$ is not separable-stable, then by Lemma \ref{notss} there exists a sequence of separable elements $g_i$ such that the fixed points of $g_i$ in $\partial C_S(\Gamma)$ converge to $z^+$ and $z^-,$ but the endpoints of $\rho(g_i)$ converge to the same point in $\partial \H^3.$  By Theorem \ref{Mj2} we have that $\bar \tau_k(z^+)$ and $\bar \tau_k(z^-)$ are each an endpoint of a leaf of one of the end invariants, although not necessarily the same end-invariant.  Suppose that $\bar \tau_k(z^+)$ contains an endpoint of a leaf of $\lambda=\lambda_j.$

Let $\mu_\infty \subset \Lambda(\sigma_k) \times \Lambda(\sigma_k)$ be the set of limit points of $\{\mu_{\sigma_k(g_i)}\}.$  Then $\mu_\infty$ is $\sigma_k(\Gamma)$-invariant and also invariant under switching the two factors.  Moreover, $(\bar \tau_k(z^+), \bar \tau_k(z^-))$ lies in $\mu_\infty.$  This implies that $Wh(V_k, \alpha, \mu_\infty)$ contains $Wh(V_k, \alpha, \mu_\lambda)$ for any system of meridians $\alpha$ (see the proof of Proposition 22 in Lee \cite{Lee2}).

Fix a system of meridians $\alpha$ such that $\lambda$ is in tight position with respect to $\alpha.$  Then, $Wh(V_k, \alpha, \mu_\lambda)$ is strongly connected and without any strong cutpoints by Proposition \ref{otal}.

By Lemma \ref{Whotherhomeo}, the Whitehead graph $Wh(M_0, F_k(\alpha), T_k(\mu_\lambda))$ is strongly connected and without strong cutpoint. Observe that $Wh(V_k, \alpha, \mu_\lambda)$ is a finite graph since edges correspond to homotopy classes of arcs in $\lambda,$ so $Wh(M_0, F_k(\alpha), T_k(\mu_\lambda))$ is also finite.
Since the graph $Wh(V_k, \alpha, \mu_\infty)$ contains $Wh(V_k, \alpha, \mu_\lambda)$, for $i$ large enough $Wh(V_k, \alpha, \mu_{\sigma_k(g_i)})$ contains the Whitehead graph $Wh(V_k, \alpha, \mu_\lambda).$ This implies $Wh(M_0, F_k(\alpha), T_k(\mu_{\sigma_k(g_i)}))$ contains $Wh(M_0, F_k(\alpha), T_k(\mu_\lambda)),$ for $i$ large enough.  But then the Whitehead graph $Wh(M_0, F_k(\alpha), T_k(\mu_{\sigma_k(g_i)}))$ is strongly connected and without strong cutpoint.  Notice, that $T_k(\mu_{\sigma_k(g_i)})$ is the same as $\mu_{(\sigma_0(g_i))}.$  This contradicts Proposition \ref{separableelement}.
\end{proof}

\section{Necessary condition}
In this section, we show that when $\pi_1(M)$ is not uniquely freely decomposable, if $\rho: \pi_1(M) \rightarrow \pslc$ is discrete and faithful, then the condition that each end invariant of $N_\rho$ is disc-busting is also necessary.  Notice that if each end invariant is disc-busting, then, in particular, $N_\rho$ contains only one compressible boundary component.  We will make use of the following characterization of which discrete and faithful representations are separable-stable.

\begin{lem}[Lee \cite{Lee2}, Lemma 20]\label{compact}
Let $\rho: \pi_1(M) \rightarrow \pslc$ be discrete and faithful.  Then, $\rho$ is separable-stable if and only if every separable curve is homotopic to a closed geodesic and the set of separable geodesics is contained in a compact set of $N_\rho.$
\end{lem}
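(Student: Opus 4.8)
The plan is to prove the two implications separately: the forward one is a short consequence of Morse stability of quasi-geodesics, while the reverse one is a compactness-and-discreteness argument in which the two hypotheses do the essential work. Throughout I fix a basepoint $x\in\H^3$, write $\tau=\tau_{\rho,x}$ and $\pi\colon\H^3\to N_\rho$ for the covering, and set $D=\max_{s\in S}d_{\H^3}(x,\rho(s)x)$, so that $\tau$ sends each edge of $C_S(\Gamma)$ to a geodesic segment of length at most $D$ and, in particular, $\tau(C_S(\Gamma))\subset\mathcal N_{D/2}(\rho(\Gamma)x)$, where $\mathcal N_r$ denotes the $r$-neighborhood. Let $\delta_0$ be a hyperbolicity constant for $\H^3$.

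Suppose first that $\rho$ is $(K,A)$-separable-stable, and fix a separable $g$ and $l\in L_S(g)$, so that $\tau(l)$ is a bi-infinite $(K,A)$-quasi-geodesic; in particular it has two distinct endpoints $z^\pm\in\partial\H^3$. Since $gl\in L_S(g)$ has the same pair of endpoints as $l$ in $\partial C_S(\Gamma)$, the geodesics $l$ and $gl$ stay at bounded distance in $C_S(\Gamma)$, so $\tau(l)$ and $\rho(g)\tau(l)=\tau(gl)$ stay at bounded Hausdorff distance in $\H^3$; therefore $\rho(g)$ preserves the pair $\{z^+,z^-\}$, and since $\rho(\Gamma)$ is discrete and torsion-free this forces $\rho(g)$ to be loxodromic with axis the geodesic joining $z^-$ to $z^+$. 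Hence the separable curve of $g$ is homotopic to the closed geodesic $g^*=\pi(\mathrm{ax}(\rho(g)))$. By Morse stability, $\tau(l)$ lies within Hausdorff distance $C=C(K,A,\delta_0)$ of $\mathrm{ax}(\rho(g))$, and combining this with $\tau(l)\subset\mathcal N_{D/2}(\rho(\Gamma)x)$ yields $\mathrm{ax}(\rho(g))\subset\mathcal N_{C+D/2}(\rho(\Gamma)x)$, so $g^*\subset\overline{B_{N_\rho}(\pi(x),C+D/2)}$. This ball is compact by completeness of $N_\rho$ and is independent of $g$, so both conditions hold.

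Conversely, assume that every separable curve is homotopic to a closed geodesic and that all separable geodesics lie in a fixed compact set $\mathcal K\subset N_\rho$. By the basepoint-independence of separable-stability (Lemma 16 of Lee \cite{Lee}) I may take $x$ with $\pi(x)\in\mathcal K$; then $\epsilon_0:=\min_{\mathcal K}\mathrm{inj}_{N_\rho}>0$, so the orbit $\rho(\Gamma)x$ is $2\epsilon_0$-separated and every separable $g$ has translation length $\ell(\rho(g))\ge2\epsilon_0$. Suppose $\rho$ is not separable-stable; then for each $i$ there are a separable $g_i$ and $l_i\in L_S(g_i)$ with $\tau(l_i)$ not an $(i,i)$-quasi-geodesic. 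I claim one can then choose vertices $a_i,b_i$ of $l_i$ with $d_{C_S(\Gamma)}(a_i,b_i)\to\infty$ but $d_{\H^3}(\tau(a_i),\tau(b_i))$ bounded independently of $i$. Granting this, left-translating everything by $a_i^{-1}$ makes $\tau(a_i)=x$ while keeping $\rho(a_i^{-1}b_i)x$ in a fixed ball about $x$ and keeping $d_{C_S(\Gamma)}(a_i,b_i)=|a_i^{-1}b_i|_S$; since $\rho$ is discrete and faithful, only finitely many group elements move $x$ a bounded distance, so $\{a_i^{-1}b_i\}$ is finite, contradicting $|a_i^{-1}b_i|_S\to\infty$.

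It remains to establish the claim, and I expect this to be where the real work lies. Write $A_i=\mathrm{ax}(\rho(g_i))$; by hypothesis $g_i^*=\pi(A_i)\subset\mathcal K$, hence $d_{N_\rho}(\pi(x),g_i^*)\le\mathrm{diam}(\mathcal K)$, and $\ell(\rho(g_i))\ge2\epsilon_0$. If the $\tau(l_i)$ stay within a uniformly bounded Hausdorff distance $W$ of $A_i$, then, projecting the $\tau$-images of the vertices of $l_i$ orthogonally onto $A_i$ and using that orbit points lying in the precompact tube $\mathcal N_W(g_i^*)$ are uniformly separated (so only boundedly many project into any unit subarc of $A_i$), one shows that the failure of the quasi-geodesic inequality forces this projected path to backtrack along $A_i$ over a distance tending to infinity; the two ends of a long enough backtrack then give vertices $a_i,b_i$ with $d_{C_S(\Gamma)}(a_i,b_i)\to\infty$ whose $\tau$-images project to within $D$ of each other on $A_i$, hence lie within $2W+D$ of each other. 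If instead the widths $W_i=\sup\{\,d(p,A_i):p\in\tau(l_i)\,\}$ tend to infinity, one first translates a point of $\tau(l_i)$ realizing about $W_i/2$ into a fixed compact set --- possible since $\pi(x),g_i^*\subset\mathcal K$ --- and runs an analogous projection-and-counting argument near it. In either case the claim follows. The main obstacle is exactly this extraction: failure of separable-stability only gives that $d_{\H^3}(\tau(a_i),\tau(b_i))$ is small \emph{relative to} $d_{C_S(\Gamma)}(a_i,b_i)$, and promoting this to an \emph{absolute} bound is precisely where one must use loxodromicity (to have an axis $A_i$ onto which to project) together with the compactness of the set of separable geodesics (which supplies the bounded geometry --- positive injectivity radius, uniform separation of orbit points --- in a tube around $A_i$), uniformly in $i$; handling the case of unbounded widths $W_i$ seems the most delicate point.
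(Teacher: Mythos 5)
The paper does not actually prove this statement --- it is quoted from Lee \cite{Lee2} (Lemma 20) --- so there is no in-text proof to compare against; I will therefore assess your argument on its own terms. Your forward direction is correct and complete: coarse $\rho(g)$-invariance of $\tau(l)$ forces $\rho(g)$ to preserve the endpoint pair, discreteness and torsion-freeness rule out elliptic and parabolic behavior, and the Morse lemma plus $\tau(C_S(\Gamma))\subset\mathcal N_{D/2}(\rho(\Gamma)x)$ traps every separable geodesic in a fixed closed ball. The reduction of the converse to extracting pairs $a_i,b_i$ with $d_{C_S}(a_i,b_i)\to\infty$ and $d(\tau(a_i),\tau(b_i))$ bounded, followed by the discreteness contradiction, is also sound, and your ``bounded width'' sub-case works essentially as you sketch it (note that the $2\epsilon_0$-separation of the orbit is automatic from $\mathrm{inj}(\pi(x))>0$ and holds everywhere, not just in a precompact tube).

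The genuine gap is the sub-case $W_i\to\infty$, exactly where you flag uncertainty, and the proposed fix does not work as described. Far from $A_i$ the nearest-point projection onto $A_i$ contracts distances by a factor on the order of $e^{-W_i}$, so arbitrarily many $2\epsilon_0$-separated orbit points can project into a single unit subarc of $A_i$; the per-unit-interval counting bound, which is the engine of the backtracking argument, fails. Translating a point of $\tau(l_i)$ realizing $W_i/2$ back to the basepoint does not help either: it moves the axis to $\rho(c_i)^{-1}A_i$, which is now at distance about $W_i/2$ from $x$, so there is no controlled geodesic near the translated point onto which to project. The correct resolution is that this sub-case never occurs, but that requires an argument you have not supplied: since $g_i^*\subset\mathcal K$, every point of $A_i$ lies within $R=\mathrm{diam}(\mathcal K)$ of the orbit $\rho(\Gamma)x$; sampling $A_i$ at unit intervals and choosing orbit points $\rho(\gamma_k)x$ within $R$ of the samples, discreteness gives a uniform bound on the word length of $\gamma_k^{-1}\gamma_{k+1}$ (only finitely many elements move $x$ by at most $2R+1$), while the Lipschitz bound on $\tau$ gives the lower quasi-geodesic inequality; hence $(\gamma_k)$ is a uniform quasi-geodesic in $C_S(\Gamma)$ asymptotic to $g_{i,\pm}$, and the Morse lemma \emph{in the Cayley graph} puts $l_i$ at uniformly bounded Hausdorff distance from $(\gamma_k)$, hence $\tau(l_i)$ at uniformly bounded distance from $A_i$. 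Once you have this, your sub-case 1 finishes the proof --- although at that point the same comparison with $(\gamma_k)$ already yields the lower quasi-geodesic inequality for $\tau(l_i)$ directly, making the contradiction-and-extraction scheme unnecessary.
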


If $\lambda$ is a minimal lamination that is not a simple closed curve, then $S(\lambda),$ the \emph{supporting surface of} $\lambda,$ is the unique minimal compact subsurface with geodesic boundary containing $\lambda.$  Let $C(\lambda)$ be the unique, up to isotopy, maximum, simple, multi-curve of $S(\lambda)$ disjoint from $\lambda.$  The multi-curve $C(\lambda)$ contains $\partial S(\lambda)$.

\begin{prop}\label{necessary}
Let $M$ be a nontrivial compression body, without toroidal boundary components, that is not the boundary connect sum of two trivial $I$-bundles over closed surfaces.  Suppose $\rho:\pi_1(M)\ra\pslc$ is a discrete and faithful representation.  If $\rho$ is separable-stable, then each end invariant of $N_\rho$ is disc-busting.
\end{prop}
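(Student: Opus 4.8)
The plan is to argue by contrapositive: assume some end invariant of $N_\rho$ is disc-dodging and produce separable curves whose geodesic representatives violate one of the two conditions of Lemma \ref{compact}. There are two cases for a disc-dodging end invariant $\lambda_j$ (an ending lamination or a parabolic curve). If $\lambda_j$ is a parabolic curve that is disc-dodging, then $\lambda_j$ itself is homotopic to a curve missing an essential disc, hence is separable; but a parabolic curve is not homotopic to a closed geodesic, so the first condition of Lemma \ref{compact} already fails and $\rho$ is not separable-stable. So the substantive case is when $\lambda=\lambda_j$ is a disc-dodging ending lamination facing an end $E_j$ with frontier surface $S_j$. Here I would use the disc-dodging hypothesis to choose a sequence of meridians $m_i$ with $i(\lambda, m_i)\to 0$.

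The key geometric step is to convert this ``asymptotically disjoint from a meridian'' data into separable curves escaping to infinity in $N_\rho$. First I would replace $m_i$ (or a subsequence) by meridians that are, up to a fixed bounded error, \emph{disjoint} from $\lambda$ and hence disjoint from $S(\lambda)$ together with some curve in $C(\lambda)$; using the structure of $C(\lambda)$ and that $M$ is not uniquely freely decomposable, a curve missing $\lambda$ and lying near $\partial S(\lambda)$ (or built from $m_i$ and an arc that Dehn-twists around it) can be taken to be separable --- it misses an essential disc by Lemma \ref{lem:freeprod}. Concretely, I would take curves $c_i$ obtained by grafting pieces of long simple closed geodesics approximating $\lambda$ onto the meridian-avoiding part, so that $c_i$ is separable for every $i$ but, because $c_i$ tracks $\lambda$ more and more closely and $\lambda$ is the ending lamination of $E_j$, the geodesic representative of $c_i$ in $N_\rho$ is forced to penetrate arbitrarily far into the end $E_j$. (When $\lambda$ is a parabolic-adjacent lamination one instead pushes into the corresponding cusp.) This shows the set of separable geodesics is not contained in any compact subset of $N_\rho$, contradicting Lemma \ref{compact}, so $\rho$ is not separable-stable.

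The main obstacle is the middle step: showing that from $i(\lambda,m_i)\to 0$ one can manufacture genuinely \emph{separable} curves $c_i$ whose geodesic representatives exit the end $E_j$, rather than merely curves that are disc-dodging or whose geodesics stay deep in the convex core but bounded. This requires (i) a careful normal-form/surgery argument on $\partial M$ producing $c_i$ simultaneously separable (missing a fixed type of essential disc, via the free-product realization of Lemma \ref{lem:freeprod} and the fact that $C(\lambda)$ contains $\partial S(\lambda)$) and $C^0$-close to $\lambda$ on the relevant subsurface, and (ii) the continuity/ending-lamination input that geodesic representatives of simple closed curves converging to the ending lamination $\lambda_j$ of $E_j$ leave every compact set --- this is standard from Thurston--Canary ending lamination theory for geometrically tame ends (Canary \cite{can}), combined with Tameness (Agol \cite{ago}, Calegari--Gabai \cite{cal-gab}). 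Once (i) and (ii) are in place, the contradiction with Lemma \ref{compact} is immediate. I would also need to treat the bookkeeping for which compressible boundary component carries $\lambda_j$ --- but since separable-stability forces a single compressible boundary component, this is a finite check rather than a real difficulty.
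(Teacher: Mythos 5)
Your reduction steps match the paper's: a disc-dodging parabolic curve is separable and not homotopic to a closed geodesic, so Lemma \ref{compact} fails; and an ending lamination disjoint from some essential disc is a limit of separable curves exiting every compact set. The gap is in the remaining --- and only substantive --- case, where the ending lamination $\lambda$ is disc-dodging but satisfies $i(\lambda,\partial D)>0$ for \emph{every} essential disc $D$. Here your middle step does not work: from $i(\lambda,m_i)\to 0$ you cannot ``replace $m_i$ by meridians that are, up to bounded error, disjoint from $\lambda$'' --- the transverse measure deposited on $m_i$ tends to zero while the number of intersection points may blow up, and if a meridian genuinely disjoint from $\lambda$ existed you would already be in the easy case. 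Likewise, a curve $c_i$ obtained by grafting long arcs tracking $\lambda$ onto a meridian-avoiding piece has no reason to be separable: separability means lying in a proper free factor, i.e.\ missing some essential disc up to homotopy (Lemma \ref{lem:freeprod}), and a curve that fellow-travels $\lambda$ will typically cross every essential disc precisely because $\lambda$ does.

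The paper's resolution of this case is the real content of the proof and is absent from your proposal: one shows (Lemma \ref{limitannuli}) that such a $\lambda$ is the unique minimal component of the Hausdorff limit of boundaries $\partial A_i$ of essential annuli. This rests on Lecuire's equivalence between annulus-busting and the non-existence of disjoint non-asymptotic rays sharing an endpoint at infinity, applied to an augmented lamination $\lambda_\infty\supset\lambda$ satisfying his weight and disc-intersection hypotheses, together with Casson's criterion producing a homoclinic leaf in the Hausdorff limit of the meridians $m_i$; the homoclinic leaf violates the ray condition, so annulus-busting fails and the $A_i$ exist. The curves $\partial A_i$ are separable because every essential annulus misses a meridian (Lemma \ref{esan}), and since they converge to the ending lamination their geodesic representatives exit every compact set, contradicting Lemma \ref{compact}. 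Without some substitute for this annulus/homoclinic-leaf argument, your outline does not close.
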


The proof of the necessary condition follows almost exactly the proof in the case when $M$ is a handlebody from Jeon-Kim-Ohshika-Lecuire \cite{JKO} (see section 5).  We include it here for completeness.

\begin{proof}
By Lemma \ref{compact}, if a representation maps a separable element of $\Gamma$ to a parabolic element in $\pslc,$ then the representation is not separable-stable.  Since any disc-dodging, closed curve is separable, if there is a disc-dodging parabolic curve in $N_\rho$, then $\rho$ is not separable-stable.  So we may assume that each parabolic curve is disc-busting.

Similarly, if there exists an ending lamination $\lambda$ that misses an essential disc $D$, then $\lambda$ is the limit of separable elements.  In particular, there exists a sequence of separable elements exiting every compact set of $N_\rho.$  By Lemma \ref{compact}, $\rho$ is not separable-stable.  So, we may also assume that $i(\lambda, \partial D)>0$ for any essential disc $D$ and any ending lamination $\lambda$ of $N_\rho.$

Then, Proposition \ref{necessary} follows from the following two lemmas whose proofs we include below.  We first explain how the necessary condition follows from the following two lemmas, and we include the proofs of the lemmas, afterwards.

\begin{lem}[Jeon-Kim-Ohshika-Lecuire \cite{JKO} Lemma 5.2]     \label{esan}
Let $M$ have compressible boundary.  For any essential annulus $A$ in $M$, there is a meridian which is disjoint from $A$.
\end{lem}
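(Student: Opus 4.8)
The plan is to work inside the characteristic compression body $V$ of $M$, since by hypothesis $\partial M$ is compressible, so $V$ is a nontrivial compression body. First I would isotope the essential annulus $A$ so that it meets the exterior boundary $\partial_e V = \partial M$ in core curves (or, if both boundary circles of $A$ lie on interior boundary components, note that $A$ can then be isotoped entirely into $\overline{M - V}$, which has incompressible boundary, so no such $A$ exists; thus at least one boundary circle of $A$ lies on $\partial_e V$). Now restrict attention to $V$ and its fundamental group $\pi_1(V) = \pi_1(M) \cong G_1 * \cdots * G_n$. The core curve $c$ of $A$ determines a conjugacy class in $\pi_1(M)$; the key structural point is that since $A$ is incompressible, $c$ is primitive in the appropriate sense and $c$ lies in a conjugate of some free factor $G_i$, or else $c$ is not separable — I would split into these two cases.

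If $c$ is conjugate into a proper free factor, then by Lemma \ref{lem:freeprod} (applied to a free decomposition $\pi_1(M) = A' * B'$ with $c$ conjugate into $A'$) the splitting is realized by an essential disc $D$, and because $c$ is conjugate into $A'$ its associated curve can be homotoped off $D$; one then checks $A$ can be isotoped off $D$ as well (the annulus is incompressible, so a standard innermost-disc/outermost-arc argument on $A \cap D$ removes all intersections). If instead $c$ generates a factor of an HNN splitting $\pi_1(M) \cong A' *_{\{1\}}$ with stable letter represented by $c$, then by Lemma \ref{lem:hnn} this splitting is realized by a non-separating essential disc $D$, and again $A$ can be isotoped off $D$. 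In either case $D$ is the required meridian disjoint from $A$.

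The main obstacle I expect is the case analysis on how $c$ sits in $\pi_1(M)$: one must rule out the possibility that $c$ is "inseparable" (winds around all the handles and is caught on every essential disc), and the cleanest way is to use the fact that an essential annulus in a compression body is not boundary-parallel yet is incompressible, which forces its core to be a primitive hyperbolic element whose axis in the Cayley graph, when pushed to the exterior boundary, can be disjointly pushed off some compressing disc. Concretely: the complement of $A$ in $M$ is a manifold with a new annulus in its boundary; by an Euler-characteristic/normal-surface argument (as in Jeon-Kim-Ohshika-Lecuire), cutting along $A$ cannot destroy compressibility of the boundary, so the cut-open manifold still has a compressing disc, and that disc, capped off appropriately, gives a meridian of $M$ disjoint from $A$. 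Making the innermost-disc argument on $A \cap D$ and the "compressibility survives cutting along $A$" step precise is where the real work lies; the rest is bookkeeping with the free-product structure.
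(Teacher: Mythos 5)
Your argument has a genuine gap, and it is circular at its core. The first branch of your case analysis assumes that the core curve $c$ of $A$ is conjugate into a proper free factor (or is the stable letter of an HNN splitting over the trivial group). But by Lemma \ref{lem:freeprod}, lying in a proper free factor is equivalent to being homotopic off an essential disc, which is essentially the conclusion you are trying to prove; you acknowledge that ruling out an ``inseparable'' core is where the work lies, but the justification you offer (that the core is a ``primitive hyperbolic element'' whose axis ``can be disjointly pushed off some compressing disc'') is an assertion, not an argument, and the fallback claim that ``cutting along $A$ cannot destroy compressibility of the boundary'' is again precisely the content of the lemma rather than a lemma you can quote. A second, independent problem: even granting that $c$ is conjugate into the factor realized by $D$, you claim that $A$ itself can be isotoped off $D$. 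An outermost-arc argument does not give an isotopy of $A$ off a \emph{given} disc $D$; what it gives is a \emph{new} essential disc disjoint from $A$, obtained by surgering $D$. (A smaller error: your parenthetical claim that no essential annulus can have both boundary circles on interior boundary components because $\overline{M-V}$ has incompressible boundary is false --- incompressible boundary does not preclude essential annuli, as $S\times I$ already shows; that case is instead handled by noting such an $A$ misses a meridian system of $V$.)

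The intended proof is much shorter and uses no algebra at all. Let $D$ be any essential disc, which exists since $\partial M$ is compressible, and isotope $D$ so that $D\cap A$ has no inessential intersections. If $D\cap A=\emptyset$ we are done. Otherwise take an arc $k$ of $D\cap A$ that is outermost in $D$, cutting off a subdisc $\Delta\subset D$ with $\Delta\cap A=k$. If both endpoints of $k$ lie on one component of $\partial A$, then $k$ cuts off a disc $\Delta'$ from $A$ and $\Delta\cup\Delta'$ is an essential disc that can be pushed off $A$. If $k$ joins the two components of $\partial A$, boundary-compressing $A$ along $\Delta$ yields an essential disc disjoint from $A$ (essential because $A$ is incompressible and not boundary-parallel). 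In either case the required meridian is produced by modifying $D$, not by isotoping $A$ or by analyzing how its core sits in $\pi_1(M)$.
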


\begin{lem}[Jeon-Kim-Ohshika-Lecuire \cite{JKO}, Lemma 5.3] \label{limitannuli}
Let $\rho: \pi_1(M) \rightarrow \pslc$ be discrete and faithful.  Suppose that $\lambda$ is a disc-dodging ending lamination of $N_\rho$.  If some component $c$ of $C(\lambda)$ is disc-busting, then $\lambda$ is the unique minimal component in the Hausdorff limit of a sequence $\partial A_i,$ where $A_i$ is an essential annulus.
\end{lem}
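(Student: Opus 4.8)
The plan is to transcribe Jeon--Kim--Ohshika--Lecuire's argument for the handlebody case \cite[\S5]{JKO}, in three stages: extract an approximating arc pattern from the disc-dodging hypothesis, splice it into the boundaries of essential annuli using the disc-busting curve $c$, and then identify the Hausdorff limit of those boundaries.

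For the first stage, since the ending lamination $\lambda$ is disc-dodging one may fix a transverse measure on $\lambda$ and a sequence of meridians $m_j$ with $i(\lambda,m_j)\to 0$; because a meridian literally disjoint from $\lambda$ already yields separable geodesics leaving every compact set of $N_\rho$, we may assume $i(\lambda,m_j)>0$, so each $m_j$ meets the supporting surface $S(\lambda)$. After wave (Whitehead) moves relative to $C(\lambda)=\partial S(\lambda)$, the arcs of $m_j\cap S(\lambda)$ are essential in $S(\lambda)$, hence cross $\lambda$ since $\lambda$ fills $S(\lambda)$; then $i(\lambda,m_j)\to 0$ forces each such arc to be carried, with combinatorial length going to infinity, by a fixed complete train track carrying $\lambda$, i.e.\ to spiral onto $\lambda$. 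What we extract is: there are essential arcs $\alpha_j$ in $S(\lambda)$, with endpoints on $C(\lambda)$, spiralling onto $\lambda$ with lengths $\to\infty$, each a sub-arc of the meridian $m_j$, so that $m_j$ supplies a disc ``completing'' $\alpha_j$ through $M$.

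For the second stage, $c$ enters. Passing to a subsequence --- using that $C(\lambda)$ has finitely many components --- so that the endpoints of $\alpha_j$ lie on fixed components of $C(\lambda)$, one splices as in \cite[\S5]{JKO}: join two parallel copies of $\alpha_j$ to fixed exterior arc-patterns, one copy being routed once around $c$, producing disjoint simple closed curves $\partial_0A_j,\partial_1A_j$ on $\partial M$ that cobound an embedded annulus $A_j\subset M$, the homotopy between them being supplied by the completing disc of $m_j$ together with an annular neighbourhood of $c$. Verifying that $A_j$ is \emph{essential} --- incompressible and not boundary-parallel --- is exactly where disc-bustingness of $c$ is used: an inessential $A_j$ would, after pushing the homotopy through the completing disc, exhibit $c$ as compressible or as isotopic off its boundary component, contradicting that $c$ is disc-busting, while the extra wrap around $c$ forces $\partial_0A_j\not\simeq\partial_1A_j$ in $\partial M$.

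For the third stage, pass to a further subsequence with $\partial A_j=\partial_0A_j\cup\partial_1A_j\to\mathcal L$ in the Hausdorff topology. Since the $\alpha_j$ spiral onto $\lambda$ with lengths $\to\infty$, every leaf of $\lambda$ is a Hausdorff limit of sub-arcs of $\partial A_j$, so $\lambda\subseteq\mathcal L$; conversely the part of $\partial A_j$ not spiralling onto $\lambda$ --- the fixed exterior patterns and the single detour around $c$ --- has complexity bounded independently of $j$, so it contributes to $\mathcal L$ only finitely many isolated leaves and no new minimal sublamination. Hence every minimal sublamination of $\mathcal L$ lies in $S(\lambda)$, and since $\lambda$ fills $S(\lambda)$ while any simple closed curve disjoint from $\lambda$ is a component of $C(\lambda)$ --- around which $\partial A_j$ wraps only a bounded number of times, so that it cannot arise as a separate closed-geodesic component of $\mathcal L$ --- such a sublamination must equal $\lambda$. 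Thus $\lambda$ is the unique minimal component of $\mathcal L$. The hard part will be the second stage: arranging the exterior splicing pattern to have \emph{bounded} complexity, so that the Hausdorff limit in the third stage is as claimed, while still obtaining a genuinely essential, non-boundary-parallel annulus, is the delicate combinatorial-topological core of \cite[\S5]{JKO}, and it is precisely there that the hypothesis on $c$ does its work.
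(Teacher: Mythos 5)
Your overall strategy is not the one used here (or in \cite{JKO}), and the step you yourself flag as ``the delicate combinatorial-topological core'' --- stage two --- is a genuine gap rather than a deferrable technicality. You produce two disjoint simple closed curves $\partial_0 A_j$ and $\partial_1 A_j$ on $\partial M$ that are freely homotopic in $M$ (via the compressing disc of $m_j$ and an annular neighbourhood of $c$), and assert that they cobound an \emph{embedded} essential annulus $A_j$. Free homotopy in $M$ only gives a singular annulus; the Annulus Theorem would upgrade this to an embedded essential annulus, but with no control over its boundary curves, which destroys stage three (you need $\partial A_j$ to follow the spiralling arcs $\alpha_j$ in order to recover $\lambda$ in the Hausdorff limit). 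Producing an embedded essential annulus with prescribed boundary from this splicing is essentially the statement you are trying to prove, in disguise, and your proposed use of the disc-busting component $c$ (to rule out compressible or boundary-parallel $A_j$) is likewise asserted rather than argued; it is also not the role $c$ actually plays.

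The proof in the paper (following \cite{JKO}) avoids constructing annuli altogether. One augments $\lambda$ to a measured lamination $\lambda_\infty$ --- adding $C(\lambda)$ and core curves of finitely many essential annuli disjoint from the lamination, each with weight $\pi$ --- so that Lecuire's criterion from \cite{Le} applies; condition (B), that $i(\lambda_\infty,\partial D)>2\pi$ for every compressing disc $D$, is exactly where disc-bustingness of $c$ enters, since $\partial D$ must cross $c$ at least twice. One then takes the Hausdorff limit of meridians $m_i'$ with $i(\lambda,m_i')\to 0$, extracts a homoclinic leaf by Casson's criterion, and shows its two half-leaves are disjoint from $\lambda_\infty$ and have lifts sharing an endpoint without being asymptotic (otherwise Dehn's Lemma would yield a compressing disc disjoint from $\lambda$). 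This violates Lecuire's condition (C'), hence condition (C) fails, which is precisely the existence of essential annuli $A_i$ with $i(\lambda_\infty,\partial A_i)\to 0$; the analysis of the Hausdorff limit of $\partial A_i$ (no transverse intersection with $\lambda$, not disjoint from $\lambda$, zero intersection with $C(\lambda)$, and fillingness of $\lambda$ on its supporting surface) then identifies $\lambda$ as the unique minimal component. To salvage your constructive route you would have to actually carry out the embedded-annulus construction with boundary control; as written, the existence of the $A_j$ is assumed rather than proved.
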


Indeed, by Lemma \ref{limitannuli}, if $N_\rho$ has an ending lamination that is disc-dodging, then there exists a sequence of simple closed curves $\gamma_i$, that are homotopic to core curves of essential annuli, such that $\{\gamma_i\}$ exits every compact set.  By Lemma \ref{esan} these curves are separable.  Hence, there exists a sequence of separable curves exiting every compact set.  By Lemma \ref{compact}, the representation is not separable-stable.

\end{proof}

\begin{proof}[Proof of Lemma \ref{esan}]
Let $A$ be an essential annulus in $M.$  Since $M$ has compressible boundary there exists an essential disc $D.$  We can isotope $D$ so that there are no inessential intersections between $D$ and
$A$. If $D \cap A= \emptyset$, then the boundary of $D$ is a meridian disjoint from $A.$ If not, consider
an outermost arc $k$ in $D \cap A.$  Then, $k$ together with an arc in $\partial D$ bounds a disc $\Delta$.    If both of the endpoints of $k$ lie on the same component of
$\partial A$, then together with an arc in $\partial A,$ the arc $k$ cuts off a disc $\Delta'$ from $A,$ and $\Delta
\cup \Delta'$ is an essential disc which can be isotoped off $A$.
If $k$ connects two components of $\partial A$, then we can
boundary-compress $A$ along $\Delta$, and to get an essential disc disjoint from $A.$
\end{proof}

\begin{proof}[Proof of Lemma \ref{limitannuli}]
The proof of Lemma \ref{limitannuli} makes use of a result of Lecuire in \cite{Le} that states the following.  If $M$ is a compact hyperbolizable $3$-manifold, and there exists a lamination $\lambda' \subset \partial M$ such that
\begin{itemize}
\item[(A)] each closed leaf has weight at most $\pi$ and
\item[(B)] for any compressing disc $D$, we have $i(\lambda', \partial D) > 2\pi$
\end{itemize}
then, the following two conditions are equivalent.
\begin{enumerate}
\item[(C)] There exists $\eta$ such that $i(\lambda', \partial A) \geq \eta$ for any essential annulus $A.$
\item[(C')] If $r_1$ and $r_2$ are disjoint geodesic rays on $\partial M - \lambda'$ such that two lifts $\widetilde r_1$ and $\widetilde r_2$ share the same endpoint in $\partial \H^3,$ then they are asymptotic.
\end{enumerate}

The first step is finding a lamination containing $\lambda$ that satisfies the first two conditions (A) and (B).  Let $\lambda_0$ be the union of $\lambda$ and $C(\lambda),$ where we place a weight of $\pi$ on each closed curve.  If there exists an essential annulus $A$ that is disjoint from $\lambda_0,$ then cut $M$ along $A$ and let $M_1$ be the component containing $\lambda.$  We claim that $\lambda$ is still disc-dodging on $M_1.$  Indeed, suppose that $m_i$ is a sequence of meridians on $M$ such that $i(\lambda, m_i) \rightarrow 0.$  Then, as in the proof of Lemma \ref{esan} we can find a sequence of meridians $m_{1,i},$ disjoint from $A,$ such that $i(\lambda, m_{1,i}) \leq 2 i(\lambda, m_i).$  Hence, $\lambda$ is still disc dodging on $M_1.$  Let $\lambda_1$ be the union of $\lambda_0$ and the core curve or curves of $A,$ depending on whether $A$ separates $M$.  Put a weight of $\pi$ on each core curve that is added.  Notice that this new sequence of meridians also has the property that $i(\lambda_1, m_{1,i}) = i(\lambda_1 \cap S(\lambda), m_{1,i}).$

If there exists an essential annulus on $M_1$ disjoint from $\lambda_1,$ we repeat the procedure above.  Since there exists at most finitely many disjoint non-isotopic essential annuli on $M,$ this procedure must terminate.  Let $M_\infty$ be the manifold obtained at the end of this procedure, and let $\lambda_\infty$ be the corresponding lamination.  Then, any closed leaf has weight at most $\pi,$ by construction.  To see that $i(\lambda_\infty, \partial D) > 2\pi$ for any essential disc $D,$ it suffices to show that $\partial D$ intersects an element in $C(\lambda)$ twice.  Indeed, this implies that $i(C(\lambda), \partial D) \geq 2\pi$ and since $i(\lambda, \partial D) >0,$ we have that $i(\lambda_\infty, \partial D) > 2\pi.$
Since there is a disc-busting component $c$ of $C(\lambda),$ we know that $c$ intersects $D$ at least once. If $c$ intersects $D$ exactly once, then a regular neighborhood of $c \cup D$ is a solid torus, $V.$  We can form an essential disc missing $c$ by taking the closure of $\partial V - \partial M_\infty$, a contradiction since $c$ is disc-busting. So $\lambda_\infty$ satisfies conditions $(A)$ and $(B)$ above.  

Secondly, we want to find two geodesic rays satisfying the hypothesis of condition $(C')$ above. Namely, we want to find two disjoint, geodesic rays on $\partial M_\infty - \lambda_\infty$ that have lifts with a common endpoint in $\partial \H^3.$  Recall that there exists a sequence of meridians $m_i'$ such that $i(\lambda, m_i') \rightarrow 0$ and $i(\lambda_\infty, m_i') = i(\lambda_\infty \cap S(\lambda), m_i').$  Up to subsequence, $m_i'$ converges to a geodesic lamination $\mu,$ in the Hausdorff topology.  By Casson's criterion (see Casson-Long \cite{CL} or Theoreme B1 in Lecuire \cite{Le}), $\mu$ contains a homoclinic leaf $h.$  We want to show that $h$ has two half-leaves $h^+$ and $h^-$ that are disjoint from $\lambda_\infty.$  Then, if $c$ is the disc-busting component in $C(\lambda),$ we have that $h^+$ and $h^-$ will lie on $\partial M_\infty - c.$ Since $\partial M_\infty-c$ is an incompressible subsurface, any lift $\tilde h^{\pm}$ has a well-defined endpoint in $\partial_\infty \Gamma',$ where $\Gamma'$ is the subgroup of $\pi_1(M_\infty)$ corresponding to the subsurface $\partial M_\infty-c.$  Since $h$ is homoclinic, $h^+$ and $h^-$ must have the same endpoint.  

To see why two such half-leaves exist, first note that if $h$ intersects $\lambda$ transversely, then it contains an arc $k$ such that $\int_k d\lambda > 0.$  This implies that for $i$ large enough, $i(\lambda, m_i') \geq \int_k d\lambda,$ which is a contradiction since $i(\lambda, m_i') \rightarrow 0.$  So $h$ does not intersect $\lambda$ transversely.  It follows that $h$ has two half-leaves $h^\pm$ that are disjoint from $\lambda \cup C(\lambda).$  Similarly, if $h$ intersects a leaf $\gamma$ of $\lambda_\infty$ outside of $S(\lambda),$ then $i(\gamma, m_i')\geq \pi$ for $i$ large enough, a contradiction since $i(\lambda_\infty, m_i') = i(\lambda_\infty \cap S(\lambda), m_i').$  So $h$ has two half-leaves $h^\pm$ that are disjoint from $\lambda_\infty.$

Next, we want to see that $\lambda_\infty$ does not satisfy condition (C') above.  Indeed we want to show that if we take lifts $\widetilde h^+$ and $\widetilde h^-$ with the same endpoint, then they are not asymptotic. Suppose they are asymptotic and take a sequence of geodesic arcs $\widetilde k_n$ joining $\widetilde h^+$ and $\widetilde h^-$ such that the length of $\widetilde k_n$ goes to zero.  Let $k_n$ be the projection of $\widetilde k_n$ to $\partial M_\infty$.  Then $\int_{k_n} d\lambda \rightarrow 0$ since $\lambda$ is minimal and not a simple closed curve.  Since $\widetilde h^+$ and $\widetilde h^-$ are asymptotic and do not transversely intersect $\lambda$ we can assume that $k_n$ are homotopic relative to their endpoints and that the homotopy is through arcs transverse to $\lambda.$  Then, we must have $\int_{k_n} d\lambda =0$ for all $n$.  In particular, the arc $k_1$ is disjoint from $\lambda.$  Then, adjusting $h^{\pm}$ so that it begins at $k_1 \cap h$, if we take $\widetilde k_1 \cup  (\widetilde h \backslash \widetilde h^{\pm})$ this projects to a nontrivial curve $m$ on $\partial M_\infty$ disjoint from $\lambda$ that is trivial in $M_\infty$.  It follows from the proof of Dehn's Lemma that there is a compressing disc disjoint from $\lambda,$ a contradiction.

Since condition (C') is not true, we have that condition (C) is also not true.  In particular, there exists a sequence $A_i$ of essential annuli such that $i(\lambda_\infty, \partial A_i) \rightarrow 0.$  Since each component of $\lambda_\infty \backslash \lambda$ is a closed curve with weight $\pi$, we have that $i(\lambda_\infty \backslash \lambda, \partial A_i)=0$ for $i$ large enough.  Up to subsequence, $\partial A_i$ converges, in the Hausdorff topology, to a geodesic lamination $\nu.$  Then, $\nu$ and $\lambda$ cannot intersect transversely.  Since $i(\lambda, \partial A_i) \neq 0,$ we cannot have that $\nu$ and $\lambda$ are disjoint.  Since  $i(\lambda_\infty \backslash \lambda, \partial A_i)=0$ for $i$ large enough, we have that $i(\nu, C(\lambda))=0.$  In particular, $\nu$ lies on $S(\lambda) \backslash C(\lambda).$  Since $\lambda$ is filling on $S(\lambda) \backslash C(\lambda)$, we have that $\lambda$ and $\nu$ coincide as geodesic laminations. Since $\partial A_i$ is disjoint from all the essential annuli used to construct $M_\infty$,  we can view $\nu$ as a lamination on $\partial M$ and we are done.
\end{proof}

\section{The uniquely freely decomposable case}

This section deals with the missing case when $\pi_1(M)$ is uniquely freely decomposable, i.e. when $M$ is the boundary connect sum of $S_1 \times I$ and $S_2 \times I,$ where $S_i$ is a closed surfaces of genus at least two for $i=1,2.$  Here an element of $\pi_1(M)$ is separable if and only if it misses an essential annulus in one of the two surface group factors.  

By Lemma \ref{esan} for any essential annulus $A,$ there exists an essential disc disjoint from $A.$  Since there is only one essential disc, $D,$ up to isotopy, in fact, $A$ lies in one of the two surface group factors.  If $A$ is contained in $S_i \times I,$ then $A$ is either an essential annulus in $S_i \times I$ or homotopic (relative to its boundary) into the boundary of $S_i \times I.$  If the latter is true, since $A$ is essential in $M,$ if $M-D=M_1 \sqcup M_2$ and $A$ lies in $M_1,$ then $\partial A$ bounds an annulus in $\partial \overline{M_1}$ containing $D.$  In either case, if a curve or lamination misses $A,$ it misses an essential annulus in $S_1 \times I$ of $S_2 \times I.$ 

We will consider annulus-busting laminations on $\partial M$.  Recall that a measured lamination $\lambda$ is annulus-busting if there exists an $\eta$ such that $i(\lambda, \partial A) \geq \eta >0$ for any essential annulus $A.$

\begin{prop}
Let $M$ be the boundary connect sum of $S_1 \times I$ and $S_2 \times I,$ where $S_1$ and $S_2$ are closed surfaces of genus at least two.  Let $\rho: \pi_1(M) \rightarrow \pslc$ be discrete and faithful.  Then, $\rho$ is separable-stable if and only if each end invariant of $N_\rho$ is annulus-busting.
\end{prop}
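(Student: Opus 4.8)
The plan is to follow the proofs of Propositions \ref{sufficient} and \ref{necessary}, replacing ``essential disc'' by ``essential annulus'', ``meridian'' by ``core curve of an essential annulus in $S_1$ or $S_2$'', and ``disc-busting'' by ``annulus-busting'' throughout, and to use the geometric criterion of Lemma \ref{compact}. Two facts particular to this case are used repeatedly. First, by Lemma \ref{esan} and the discussion at the beginning of this section, every essential annulus of $M$ is isotopic to $c\times I$ for a simple closed curve $c$ on $S_1$ or on $S_2$; since $c$ is disjoint from a parallel pushoff $c'\times I$ of that annulus, every simple closed curve on $S_1$ or $S_2$ -- and hence every simple closed curve on $\partial M$ disjoint from the meridian -- is separable. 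Second, $S_i\times I$ is the only compact orientable irreducible atoroidal $3$-manifold with fundamental group $\pi_1(S_i)$ (Theorem 10.5 of \cite{hem}, as in Lemma \ref{lem:mermap}), and a short argument with characteristic compression bodies then gives $A(M)=\{[M]\}$; thus $N_\rho$ is homeomorphic to the interior of $M$, and there is no need to compare Whitehead graphs across homeomorphism types (Lemma \ref{Whotherhomeo}) -- all graphs below are formed using a single convex cocompact representation $\sigma$ uniformizing $M$, as in Section \ref{white}.

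For the necessary condition, suppose $\rho$ is separable-stable. If some parabolic curve $p$ were annulus-dodging, then $i(p,\partial A)=0$ for some essential annulus $A$, so $p$ can be pushed off $A$ and is separable while $\rho(p)$ is parabolic; this contradicts Lemma \ref{compact}, so every parabolic curve is annulus-busting. Now suppose some ending lamination $\lambda$ is annulus-dodging. If $\lambda$ itself misses an essential annulus $A$, then the simple closed curves exiting the corresponding geometrically infinite end eventually miss $A$ too, so are separable, and they leave every compact set, again contradicting Lemma \ref{compact}; so we may assume $i(\lambda,\partial A)>0$ for every essential annulus $A$, and in particular $i(\lambda,\partial D)>0$. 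One then runs the proof of Lemma \ref{limitannuli} with ``disc-busting'' replaced by ``annulus-busting'': enlarging $\lambda$ to a lamination $\lambda_\infty$ by successively adjoining $C(\lambda)$ and the core curves of a maximal family of disjoint essential annuli disjoint from the growing lamination, each with weight $\pi$, one obtains a lamination satisfying Lecuire's hypotheses (A) and (B) of \cite{Le} -- for (B) one uses, as in Lemma \ref{limitannuli}, that an annulus-busting component of $C(\lambda)$ must meet $\partial D$ at least twice, so $i(\lambda_\infty,\partial D)>2\pi$ -- and then Casson's criterion and the homoclinic-leaf analysis show that condition (C$'$) of \cite{Le} fails, hence so does (C), producing essential annuli $A_i$ with $i(\lambda_\infty,\partial A_i)\to 0$ whose boundaries have $\lambda$ as the unique minimal component of their Hausdorff limit. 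Since $\lambda$ is the ending lamination of an end, the separable curves $\partial A_i$ leave every compact set of $N_\rho$, contradicting Lemma \ref{compact}.

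For the sufficient condition, assume each end invariant of $N_\rho$ is annulus-busting and suppose $\rho$ is not separable-stable. By Lemma \ref{notss} there are separable elements $g_i$ with $g_i^{\pm}\to z^{\pm}$, $z^{+}\neq z^{-}$, in $\partial C_S(\Gamma)$ while $\bar\tau_\rho(g_i^{+})$ and $\bar\tau_\rho(g_i^{-})$ converge to a common point; by Theorem \ref{Mj2}, $z^{+}\,\widetilde{\mathcal R}\,z^{-}$, so $\bar\tau_\rho(z^{+})=\bar\tau_\rho(z^{-})$ is an endpoint of a leaf of one of the end invariants $\lambda$. Choosing, by the annular analogue of Proposition \ref{discbusting}, a system of essential annuli $\mathcal A$ with respect to which $\lambda$ is in tight position, one forms the corresponding annular analogue $Wh(M,\mathcal A,\mu)$ of the Whitehead graph of Section \ref{white} for a $\sigma(\Gamma)$-invariant symmetric $\mu\subset\Lambda(\sigma)\times\Lambda(\sigma)$. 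Put $\mu_\infty$ equal to the set of limit points of $\{\mu_{\sigma(g_i)}\}$; then $(\bar\tau_\rho(z^{+}),\bar\tau_\rho(z^{-}))\in\mu_\infty$, so $Wh(M,\mathcal A,\mu_\infty)$ contains the finite graph $Wh(M,\mathcal A,\mu_\lambda)$, which by the annular analogue of Proposition \ref{otal} is strongly connected without strong cutpoint; hence for $i$ large $Wh(M,\mathcal A,\mu_{\sigma(g_i)})$ contains $Wh(M,\mathcal A,\mu_\lambda)$ and is strongly connected without strong cutpoint, contradicting the annular analogue of Proposition \ref{separableelement} since $g_i$ is separable.

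The step I expect to be the main obstacle is formulating and proving the annular versions of Lemma \ref{endpoints}, Proposition \ref{discbusting}, and Propositions \ref{separableelement} and \ref{otal}, since Otal works only with systems of meridians. Cutting $M$ along essential annuli produces $I$-bundles over surfaces with boundary, whose fundamental groups are free rather than closed-surface groups, so the picture of Section \ref{white} -- in which a lift of a meridian bounds a round disc in $\partial\H^3$ and the complementary pieces carry $\pi_1(\Sigma)$ for a closed surface $\Sigma$ -- must be replaced by one in which the lift of the core curve of an essential annulus cuts $\Lambda(\sigma)$ at its two fixed points, and a ``system of annuli'' must include the meridian $D$ to cut $M$ into $I$-bundles over pants. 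The combinatorial content of Otal's arguments -- strong connectedness, strong cutpoints, tight position, well-definedness of endpoints of leaves -- is unchanged under this translation, and the underlying estimates of Otal and Lecuire are already stated for arbitrary essential annuli, so the adaptations go through; carrying them out carefully is where the real work lies.
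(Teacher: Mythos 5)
Your proposal diverges from the paper's proof in both directions, and in both places the divergence opens a genuine gap. For the sufficient direction, you propose to build an ``annular Whitehead graph'' by cutting $M$ along a system of essential annuli and to prove annular analogues of Propositions \ref{discbusting}, \ref{otal} and \ref{separableelement}; you correctly identify this as the main obstacle but then assert without justification that ``the adaptations go through.'' They are not routine: cutting along annuli produces $I$-bundles over surfaces with boundary, whose fundamental groups are free and whose lifted cutting curves do not bound discs in $\partial\H^3$ separating off a round piece of the limit set, so Otal's dichotomy would have to be re-proved from scratch in a setting where its basic geometric picture fails. The paper avoids this entirely: it keeps the ordinary Whitehead graph with respect to the single meridian $\alpha$ (noting that an annulus-busting $\lambda$ must meet $\alpha$ and hence is in tight position), and introduces the auxiliary notion of an edge $(U_i,gU_j)$ \emph{intersecting} an essential annulus $A$, meaning that the two lifts of the components of $\partial A$ sharing endpoints separate $U_i$ from $gU_j$ in $\partial\H^3$. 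Annulus-busting forces $Wh(M,\alpha,\mu_\lambda)$ to intersect every essential annulus in this sense; since $Wh(M,\alpha,\mu_{g_i})$ eventually contains $Wh(M,\alpha,\mu_\lambda)$, the geodesic representative of $g_i$ meets every essential annulus, contradicting separability of $g_i$. No new Whitehead-graph theory is needed.

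For the necessary direction, your plan to rerun Lemma \ref{limitannuli} with ``disc'' replaced by ``annulus'' does not work: the engine of that proof is Casson's criterion, which produces a homoclinic leaf in the Hausdorff limit of a sequence of \emph{meridians} $m_i'$ with $i(\lambda,m_i')\to 0$, and that sequence is supplied by the disc-dodging hypothesis. Here $M$ has a single meridian up to isotopy, and an annulus-dodging ending lamination may perfectly well be disc-busting, so no such sequence exists and the homoclinic-leaf step cannot be run. (The machinery is also circular in spirit: the failure of Lecuire's condition (C) is precisely the statement that $\lambda_\infty$ is annulus-dodging, which is essentially your hypothesis.) The paper's argument is instead short and direct: after reducing to the case $i(\lambda,\partial A)>0$ for every essential annulus via Lemma \ref{compact}, annulus-dodging gives annuli $A_i$ with $i(\lambda,\partial A_i)\to 0$; every essential annulus misses the meridian and so lies in one surface factor, say $S_1\times I$; a limit $\lambda'$ of $\partial A_i$ in $PML(\partial M)$ then lies on $S_1$ and satisfies $i(\lambda,\lambda')=0$, so either $\lambda$ is disjoint from $\lambda'$ (whence $\lambda$ misses an essential annulus over a curve of $S_1$) or $\lambda\subset\lambda'$ lies on $S_1$ (whence $\lambda$ misses every essential annulus in $S_2\times I$) --- a contradiction either way. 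You should replace your Lecuire--Casson argument with this.
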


\begin{proof}
We will start with the sufficient condition, and we will follow the general outline of Proposition \ref{sufficient}.  In this case, we have the following simplifications.  Firstly, $A(M)$ has only one element, i.e. any manifold homotopy equivalent to $M$ is homeomorphic to $M$.  Secondly, we have only one meridian $\alpha$.  If $\lambda$ is annulus-busting, then it must intersect $\alpha,$ and so it is automatically in tight position with respect to $\alpha.$

Now suppose that $\rho: \pi_1(M) \rightarrow \pslc$ is a discrete and faithful representation such that each end invariant and parabolic locus of $N_\rho$ is annulus-busting.  If $\rho$ is not separable-stable, by Lemma \ref{notss}, there exists a sequence of separable elements $g_i$ with endpoints $g_i^{\pm}$ such that $g_i^+ \rightarrow z^+, g_i^- \rightarrow z^-$ and $\bar \tau(z^+)=\bar \tau(z^-).$  Then, by Theorem \ref{Mj2}, the image $\bar \tau(z^+)$ is an endpoint of one of the end invariants $\lambda$.  Here $\lambda$ can either be an ending lamination or a simple closed curve that is mapped to a parabolic element under $\rho.$  As before, this implies that for $i$ large enough $Wh(M, \alpha, \mu_{g_i})$ contains $Wh(M, \alpha, \mu_\lambda).$  If $\lambda$ is annulus-busting, then $Wh(M, \alpha, \mu_\lambda)$ intersects each essential annulus in the following sense.

Suppose that $A$ is an annulus in $S_i \times I.$  Let $\partial A = c_1 \sqcup c_2$.  In defining the Whitehead graph, we fixed lifts $\widetilde S_i$ of $S_i$.  If we take a lift $\widetilde c_1$ of $c_1$ and the lift $\widetilde c_2$ of $c_2$ with the same endpoints as $\widetilde c_1$, then $\widetilde c_1 \cup \widetilde c_2$ forms a loop in $\partial \H^3$.
 We will say that an edge $e = (U_i, gU_j)$ intersects $A$ if there exists lifts $\widetilde c_1$ and $\widetilde c_2$ in $\widetilde S_i$ as above such that $U_i$ and $gU_j$ lie in different components of $\partial \H^3 - (\widetilde c_1 \cup \widetilde c_2)$.

Since $Wh(M, \alpha, \mu_{g_i})$ contains $Wh(M, \alpha, \mu_\lambda)$ for $i$ large enough, we have that $Wh(M, \alpha, \mu_{g_i})$ ``intersects'' any essential annulus.  This implies that the geodesic representative $\gamma_i$ of $g_i$ intersects any essential annulus, a contradiction.

For the necessary condition, suppose that $\rho:\pi_1(M) \rightarrow \pslc$ is a discrete, faithful and separable-stable representation and some end invariant of $N_\rho$ is annulus-dodging.  If a parabolic curve $c$ is annulus-dodging, then $c$ is separable, a contradiction to Lemma \ref{compact}.  So suppose that all parabolic curves are annulus-busting, but there is an ending lamination $\lambda$ that is annulus-dodging. If $\lambda$ is disjoint from an essential annulus $A,$ then we can find a sequence of separable elements approaching $\lambda,$ a contradiction to Lemma \ref{compact}.  So we can assume that $(\lambda, A) >0$ for any essential annulus.  Since $\lambda$ is annulus-dodging, there exists a sequence $A_i$ of essential annuli such that $i(\lambda, \partial A_i) \rightarrow 0.$  Since each essential annulus misses the essential disc, $D,$ up to a subsequence, we can assume that $A_i$ is contained in one of the two surface group factors for all $i.$  Without loss of generality, assume that $A_i$ is contained in $S_1 \times I,$ for all $i.$  Let $\lambda'$ be the limit, up to subsequence, of $\partial A_i$ in $PML(\partial M)$ the space of projective measured laminations.  Since $\partial A_i$ is disjoint from the meridian for all $i$, so is $\lambda'$.  In particular, $\lambda'$ lies on $S_1.$  Since $i(\lambda, \lambda')=0$ either $\lambda$ and $\lambda'$ are disjoint or $\lambda \subset \lambda'$ since $\lambda$ is minimal.  If $\lambda$ and $\lambda'$ are disjoint, then we can find a simple closed curve $c$ on $S_1$ that is disjoint from $\lambda.$  Then, $c$ is the boundary of an essential annulus $A_c$ that misses $\lambda,$ a contradiction.  If $\lambda \subset \lambda',$ then $\lambda$ lies on $S_1$ and misses any essential annulus in $S_2 \times I,$ a contradiction.
\end{proof}

\section*{Acknowledgements}
We thank the organizers of the workshop on ``Ergodic Decompositions
of Representation Varieties" held in Princeton in October 2011,
where the layout of the paper was conceived.  The second author thanks Dick Canary for several useful conversations.


\begin{thebibliography}{99}
\bibitem{ago} I. Agol, {\em Tameness of hyperbolic 3-manifolds}, preprint available at arxiv:math\textbackslash 0405568v1
\bibitem{bon} F. Bonahon, {\em Cobordism of automorphisms of surfaces}, Ann. Sci. \'Ecole Norm. Sup. (4) {\bf 16} (1983), no.~2, 237--270.
\bibitem{cal-gab} D. Calegari\ and\ D. Gabai, {\em Shrinkwrapping and the taming of hyperbolic 3-manifolds}, J. Amer. Math. Soc. {\bf 19} (2006), no.~2, 385--446.
\bibitem{can} R. D. Canary, {\em Ends of hyperbolic $3$-manifolds}, J. Amer. Math. Soc. {\bf 6} (1993), no.~1, 1--35.
\bibitem{can-survey}R.D. Canary, Dynamics on Character Varieties: A survey, 2013, preprint available at arXiv:1306.5832 [math.GT].
\bibitem{can-sto} R.D. Canary\ and P. Storm, Moduli spaces of hyperbolic 3-manifolds and dynamics on character varieties,  to appear in Commentarii Mathematici Helvetici.
\bibitem{CL}A.\ Casson and D.\ Long, {\em Algorithmic compression of surface automorphisms},  Invent. Math. 81 (1985), no. 2, 295--303.
\bibitem{Gel}T. Gelander, {\em On deformations of Fn in compact Lie groups}, Israel J. Math. 167 (2008), 15--26.
 \bibitem{Gol} W. Goldman, {\em An ergodic action of the outer automorphism group of a free group}, GAFA, 17 (2007), 793--805.
\bibitem{gru} I. A. Grushko, On the bases of a free product of groups. Matematicheskii Sbornik, {\bf 8} (1940), 169--182.
\bibitem{hem} J. Hempel, {\it $3$-Manifolds}, Princeton Univ. Press, Princeton, NJ, 1976.
\bibitem{JKO}W. Jeon, I. Kim and K. Ohshika with C. Lecuire, {\em Primitive stable representations of free Kleinian groups}, to appear in Israel Jour Math.
\bibitem{kap} M. Kapovich, {\it Hyperbolic manifolds and discrete groups}, Progress in Mathematics, 183, Birkh\"auser Boston, Boston, MA, 2001. 
\bibitem{KLO}I. Kim, C. Lecuire and K. Ohshika, {\em Convergence of freely decomposable Kleinian groups}, submitted.
\bibitem{KS} G. Kleineidam and J. Souto, {\em Algebraic convergence of function groups}, Comment.Math. Helv. 77 (2002), 244--269.
\bibitem{kur} A. Kurosch, Die Untergruppen der freien Produkte von beliebigen Gruppen, Math. Ann. {\bf 109} (1934), no.~1, 647--660. 
\bibitem{Le} C. Lecuire, {\em Plissage des vari\'et\'e hyperboliques de dimension 3}, Invent. Math. 164 (2006), no. 1, 85--141.
\bibitem{Lec} C.\ Lecuire, {\em An extension of the Masur domain. Spaces of Kleinian groups},  London Math. Soc. Lecture Note Ser. 329, (2006), 49--73.
\bibitem{Lee} M. Lee, Dynamics on $\pslc$-character varieties of certain hyperbolic $3$-manifolds, Ph.D. Thesis, University of Michigan, 2012, available at http://hdl.handle.net/2027.42/94050
\bibitem{Lee2} M. Lee, {\em Dynamics on the $\pslc$-character variety of a compression body}, to appear in Algebraic and Geometric Topology.
\bibitem{mar} A. Marden, The geometry of finitely generated kleinian groups, Ann. of Math. (2) {\bf 99} (1974), 383--462. 
\bibitem{mcc} D. McCullough, {\em Compact submanifolds of $3$-manifolds with boundary}, Quart. J. Math. Oxford Ser. (2) {\bf 37} (1986), no.~147, 299--307.
\bibitem{mm} D. McCullough\ and\ A. Miller, {\em Homeomorphisms of $3$-manifolds with compressible boundary}, Mem. Amer. Math. Soc. {\bf 61} (1986), no.~344
\bibitem{mms} D. McCullough, A. Miller\ and\ G. A. Swarup, {\em Uniqueness of cores of noncompact $3$-manifolds}, J. London Math. Soc. (2) {\bf 32} (1985),
\bibitem{Min}Y. N. Minsky, On dynamics of $\out(F_n)$ on $\pslc$ characters, Israel J. Math. {\bf 193} (2013), no.~1, 47--70.
\bibitem{Mj8}
M. Mj, {\em Cannon-Thurston maps for Kleinian groups}, preprint 2010 available at  arXiv:1002.0996 [math.GT].
\bibitem{Ot}J. P. Otal, {\em Courants g\'eod\'esiques et produits libres}, Th\`ese d'Etat, Universit\'e de Paris-Sud, Orsay (1988).
\bibitem{pal} F. Palesi, Ergodic actions of mapping class groups on moduli spaces of representations of non-orientable surfaces, Geom. Dedicata {\bf 151} (2011), 107--140.
\bibitem{sta} J. R. Stallings, Whitehead graphs on handlebodies, in {\em Geometric group theory down under (Canberra, 1996)}, 317--330, de Gruyter, Berlin.
\bibitem{Sul} D. Sullivan, {\em On the ergodic theory at infinity of an arbitrary discrete group of hyperbolic motions}, Riemann Surfaces and Related Topics: Proceedings of the 1978 Stony Brook Conference, Ann. of Math. Stud. 97, Princeton, 1981.
\bibitem{pi-xi} D. Pickrell\ and\ E. Z. Xia, {\em Ergodicity of mapping class group actions on representation varieties. I. Closed surfaces,} Comment. Math. Helv. {\bf 77} (2002), no.~2, 339--362.
\bibitem{Wh1} J. H. C. Whitehead, {\em On certain sets of elements in a free group}, Proc. London Math. Soc. 41 (1936), 48--56.
782-800.
\end{thebibliography}
\end{document}